\documentclass[12pt]{amsart}

\usepackage[utf8]{inputenc}
\usepackage{xcolor}
\definecolor{codexgreen}{RGB}{0,105,92}
\newif\ifcodexmarks
\codexmarkstrue

\usepackage{amsmath} % Símbolos refinados de matemáticas
\usepackage{graphicx} % Incluir imágeness
\usepackage{subcaption} % Poder usar subfiguras
\usepackage{amsfonts}
\usepackage{mathrsfs}
\usepackage{amssymb} % Los símbolos el los conjuntos numéricos
\usepackage{amsthm}   % <--- ESTE es el que crea el entorno 'proof'

\usepackage{amssymb}
\usepackage[all]{xy}
\usepackage{tikz-cd}
\newcommand{\A}{\mathscr A}

\newtheorem*{theorem*}{Theorem}
\newtheorem{theorem}{Theorem}[section]
\newtheorem{lemma}[theorem]{Lemma}
\newtheorem{corollary}[theorem]{Corollary}
\newtheorem{proposition}[theorem]{Proposition}

\newcommand{\To}{\longrightarrow}

\theoremstyle{definition}
\newtheorem{defn}[theorem]{Definition}
\newtheorem{example}[theorem]{Example}
\newcommand{\adef}{\begin{defn}}
\newcommand{\zdef}{\end{defn}}
\def\PB{\operatorname{PB}}

\def\Ker{\operatorname{Ker}}
\def\supp{\operatorname{supp}}

\newtheorem*{claim*}{Claim}
\newtheorem*{remark*}{Remark}

\title{The Serre closure of Banach spaces}

\dedicatory{Dedicated to Professor Serre on his 100-th birthday, with affection and gratitude}

\date{\today}

\begin{document}

\author[Cabello]{F\'elix Cabello S\'anchez}
\address{Universidad de Extremadura, Instituto de Matem\'aticas Imuex\\
Avenida de Elvas s/n\\ 06011-Badajoz\\ Spain} \email{fcabello@unex.es}

\email{castillo@unex.es}

\email{raulmb@unex.es}

\email{navarrogarmendia@unex.es}

\author[Castillo]{Jes\'us M.\,F. Castillo}
%\address{Universidad de Extremadura, Instituto de Matem\'aticas Imuex\\
%Avenida de Elvas s/n\\ 06011-Badajoz\\ Spain} \email{castillo@unex.es}
% CODEX NOTE: this duplicates the first author's email address; confirm the
% intended address for the second author before submission.

\author[Mart\'inez]{\\ Ra\'ul Mart\'inez-Boh\'orquez}
%\address{Universidad de Extremadura, Instituto de Matem\'aticas Imuex\\
%Avenida de Elvas s/n\\ 06011-Badajoz\\ Spain}
%\email{raulmb@unex.es}

\author[Navarro]{Jos\'e Navarro Garmendia}
%\address{Universidad de Extremadura, Instituto de Matem\'aticas Imuex\\
%Avenida de Elvas s/n\\ 06011-Badajoz\\ Spain} \email{navarrogarmendia@unex.es}

\thanks{This research has been supported in part by Projects PID2022-142024NB-I00 (SIGA) and PID2023-146505NB-C21 funded by MCIU/AEI/10.13039/5011000110331/FEDER/UE as well as Projects IB24002 and GR24068, funded by Junta de Extremadura and the EU-ERDF}

\begin{abstract}
We determine the Serre closure of Banach spaces in the category of quasi-Banach spaces, that is, the smallest class of quasi-Banach spaces that contains all Banach spaces and is stable under subspaces, quotients, and extensions. Precisely, a quasi-Banach
space $X$ belongs to this closure if and only if there exist constants $M$ and $k\in\mathbb N$ such that
$$
\frac{\|x_1+\dots+x_n\|}{\|x_1\|+\dots+\|x_n\|}\leq M\log^k n
$$
for every $n\geq 2$ and every $x_1,\dots,x_n\in X$; equivalently, $X$ is a
quotient of an Orlicz sequence space $\ell(I,f_l)$, where $I$ is an
index set and $f_l(t)$ is equivalent to $t\log^l(1/t)$ near zero.

The proof
combines geometrical estimates for quasi-Banach spaces and quasilinear maps with the study of higher order derived spaces associated to complex interpolation between $\ell_{2/3}$ and $\ell_2$. We provide examples of spaces in the Serre closure of Banach spaces that are not iterated extensions of Banach spaces and discuss some related problems.
\end{abstract}

\maketitle

\section{Introduction} The reader is addressed to the forthcoming sections for any unexplained notation.

In an additive category $\mathbf{C}$, a class of objects  is called a Serre class if it is closed under subobjects, quotients, and extensions. Equivalently, for any short exact sequence $A \to B \to C$ in $\mathbf{C}$, the middle object $B$ belongs to the class if and only if both $A$ and $C$ do.

If $\bf D$ is a class of objects of $\bf C$, the Serre closure of $\bf D$ in $\bf C$, denoted by  $\langle\mathbf{D}\rangle_\mathbf{C}$, is the smallest Serre class containing $\mathbf{D}$. Although many interesting classes of Banach spaces are Serre (many renowned specimens can be seen stuffed in \cite{3sp}), Banach spaces themselves do not form a Serre class within the category of quasi-Banach spaces and operators, which is the natural framework for studying extensions in this setting. Indeed, in the late seventies, Roberts, Kalton, and Ribe independently constructed nontrivial extensions of $\ell_1$
(the Banach space of all summable sequences) by the line, the last of which has achieved the greatest fame.

In this paper, we compute the Serre closure of Banach spaces in the category of quasi-Banach spaces and (linear, bounded) operators.
We denote by $\bf Q$ and $\bf B$ the category of quasi-Banach spaces and the class of Banach spaces, respectively, and we shamelessly write $X\in\mathbf B$ and the like.

Given a quasi-Banach space $X$, we consider the sequence
%Our characterization  introduces the sequence
\begin{equation*}
c(n;X) = \sup \frac{\|x_1+\dots+x_n\|}{\|x_1\|+\dots+\|x_n\|},
\end{equation*}
%of a quasi-Banach space $X$,
where the sup is taken over all families $x_1, \dots, x_n$ in $X$, to obtain:

\begin{theorem*}\label{THEOREM} For a quasi-Banach space $X$ the following are equivalent:
\begin{enumerate}
\item $X$ belongs to the Serre closure of the class of Banach spaces in $\bf Q$.
\item There exist $k\geq 0$ and $M>0$ such that
$c(n;X) \leq M \log^k n$ for every $n\geq2$.
\item There exist an index set $I$ and an integer $l\geq0$ such that $X$ is a
quotient of the sequence space $\ell(I,f_l)$.
\end{enumerate}
\end{theorem*}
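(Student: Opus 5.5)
The plan is to prove (1)$\Rightarrow$(2)$\Rightarrow$(3)$\Rightarrow$(1). Throughout, write $\mathcal L$ for the class of quasi-Banach spaces satisfying (2), that is, with $c(n;X)=O(\log^k n)$ for some $k$.

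For (1)$\Rightarrow$(2) it suffices to show that $\mathcal L$ is a Serre class containing $\mathbf B$. Banach spaces have $c(n;X)=1$.

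Closure under subspaces is immediate, since $c(n;Y)\le c(n;X)$ when $Y\subset X$.

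For a quotient $X/Y$, given $\bar x_i$ we lift them to $x_i$ with $\|x_i\|\le 2\|\bar x_i\|$. This gives $c(n;X/Y)\le 2c(n;X)$.

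The real content is extensions. Let $0\to Y\to X\to Z\to 0$ be exact, with $c(n;Y)\le M\log^a n$ and $c(n;Z)\le M\log^b n$. We would represent $X$ up to equivalence as $Y\oplus_\Omega Z$ for a quasilinear map $\Omega:Z\to Y$. The quasi-norm is then $\|(y,z)\|=\|y-\Omega z\|+\|z\|$.

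For $(y_i,z_i)$, $i\le n$, we have
$$
\Big\|\sum_i y_i-\Omega\Big(\sum_i z_i\Big)\Big\|\lesssim \Big\|\sum_i (y_i-\Omega z_i)\Big\|+\Big\|\Omega\Big(\sum_i z_i\Big)-\sum_i\Omega z_i\Big\|.
$$
The first term is at most $c(n;Y)\sum_i\|(y_i,z_i)\|$. For the second we need a ``quasilinear $n$-term estimate''
$$
\Big\|\Omega\Big(\sum_i z_i\Big)-\sum_i\Omega z_i\Big\|\le C\, c(n;Y)\, c(n;Z)\,\log n\sum_i\|z_i\|.
$$
We would prove it by dyadic splitting: pair up the $z_i$ in a binary tree of depth $\log_2 n$. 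At each level, the defects of $\Omega$ summed over all nodes of that level are controlled by $c(n;Y)$ times $\sum\|z_{\text{node}}\|$, and $\sum\|z_{\text{node}}\|\le c(n;Z)\sum\|z_i\|$. Summing over the $\log n$ levels gives the factor $\log n$.

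Combining the two terms, $c(n;X)\lesssim \log^{a+b+1}n$, so $X\in\mathcal L$. One should also check that passing to an equivalent quasi-norm only changes constants; this is harmless. This extension estimate is one delicate point, since the quasi-triangle constant must not compound over the $\log n$ levels. To avoid that, we would first use the Aoki--Rolewicz theorem to pass to $p$-norms.

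For (2)$\Rightarrow$(3) we would take $I$ to be the unit ball of $X$ and the natural map $T:\ell(I,f_l)\to X$, $T(e_x)=x$, with $l$ chosen slightly larger than $k$ (say $l=k+2$). The claim is that $T$ is bounded and open. Openness is clear, since every point of the ball is the image of a basis vector.

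Boundedness means that finitely supported sums $\sum t_x x$ with $\sum f_l(|t_x|)\le 1$ have uniformly bounded norm. We would group the coefficients dyadically, $|t_x|\sim 2^{-j}$. Let $N_j$ be the number of $x$ in the $j$-th group; the Orlicz condition gives $N_j\,2^{-j}j^l\lesssim 1$. By (2), the $j$-th block has norm at most
$$
M\log^k(N_j)\,N_j2^{-j}\lesssim j^{k-l}.
$$
The blocks are then summed using a $p$-norm. With $l$ large enough, for example $(l-k)p>1$, this converges. Since we only need some $l$, any finite choice is fine.

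For (3)$\Rightarrow$(1), quotients preserve the Serre closure, so it suffices to show that $\ell(I,f_l)$ belongs to it. Here we invoke the earlier machinery on derived spaces of the complex interpolation scale $(\ell_{2/3},\ell_2)$. The higher-order derived spaces $d_l\ell_2$ come with exact sequences $0\to d_{l-1}\to d_l\to \ell_2\to 0$, or more generally sequences built from the lower-order derived spaces. By induction on $l$ they are iterated extensions of Banach spaces, hence lie in the closure.

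The remaining step is to identify $\ell(I,f_l)$, via known Kalton-type results, as a subspace or quotient of (sums of) such derived spaces, for instance through the Kalton--Peck type maps $t\mapsto t\log^l t$ on $\ell_1$ or $\ell_2$ and then $\ell_1$-sums over $I$. Equivalently, $\ell(I,f_l)$ should arise as a quotient of an iterated twisted sum of copies of $\ell_1(I)$. I expect this identification of $\ell(I,f_l)$ with a piece of the $l$-th derived space to be the main obstacle. I would first try the direct route: realize $\ell(I,f_l)$ as an extension of $\ell(I,f_{l-1})$ by $\ell_1(I)$ via the Ribe-type quasilinear map $x\mapsto \sum x_i\log|x_i|\cdot(\ldots)$, and induct on $l$.
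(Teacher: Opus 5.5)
Your arguments for (1)$\Rightarrow$(2) and (2)$\Rightarrow$(3) are essentially the paper's. Proving directly that the polylogarithmic class is closed under subspaces, quotients and extensions, with extensions handled by the dyadic-tree bound on $\Omega(\sum z_i)-\sum\Omega z_i$, is the content of Section~\ref{subsec:quasilinear} and Proposition~\ref{1implies2}. Your level-set blocking, summed with a $p$-norm, is a variant of Lemma~\ref{lem:galb}. That lemma blocks by rank in the decreasing rearrangement and gets every $l>k$; you get $l>k+1/p$, which is enough. One correction: adding the $\log_2 n$ level contributions in $Y$ costs an extra factor $c(\log_2 n;Y)$, or $(\log n)^{1/p-1}$ with a $p$-norm. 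So your exponent $a+b+1$ is slightly off, but any polylogarithmic factor is harmless.

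The genuine gap is (3)$\Rightarrow$(1), where the paper's real work lies, and you give no argument for it. The identification of $\ell(f_l)$ inside a derived space cannot be cited for $l\geq 2$; Kalton's results cover only $l=1$, and this is the paper's main new proposition. Both concrete routes you suggest are in fact impossible. By Example~\ref{ex:lf1}, proved via Lemma~\ref{lem:compactness}, $\ell(f_1)$ is not a quotient of any iterated extension of Banach spaces. Hence it is not a quotient of an iterated twisted sum of copies of $\ell_1$. Nor can it be an extension of $\ell(f_0)=\ell_1$ by $\ell_1$, since that would put $\ell(f_1)$ itself in $\mathbf B^{(2)}$.

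The Kalton--Peck type map $\Omega x=x\log(\|x\|_1/|x|)$ yields $\ell(f_1)$ as the domain $\{(0,x)\}$ of $\ell_1\oplus_\Omega\ell_1$, a closed subspace, not as the middle space. The missing idea is that the same happens at every order. For the couple $(\ell_{2/3},\ell_2)$ the space at $\theta=1/2$ is $\ell_1$, not $\ell_2$ as you wrote; an $\ell_2$-centred scale would produce $\ell(f_k^2)$-type spaces instead. The extremals $x^{3/2-z}$ have Taylor coefficients $\Phi_i(x)=\frac{x}{i!}\log^i\frac{\|x\|_1}{|x|}$. One must prove that $x\mapsto(0,\dots,0,x)$ is an isomorphism of $\ell(f_k)$ onto a closed subspace of $X^{(k+1)}_{1/2}\in\mathbf B^{(k+1)}$.

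The bound $\|(0,\dots,0,x)\|_{X^{(k+1)}_{1/2}}\lesssim\|x\|_{\ell(f_k)}$ follows by induction from two facts: the twisted-sum description $X^{(k+1)}_{1/2}\simeq X^{(k)}_{1/2}\oplus_{\Omega_{1,k}}\ell_1$, and $\varphi_{k-i}\circ\varphi_i\leq\varphi_k$ near $0$. The converse, that $(0,\dots,0,x)\in X^{(k+1)}_{1/2}$ forces $\Phi_k(x)\in\ell_1$, is the hard part. Pointwise domination does not control quasinorms in $X^{(n)}_{1/2}$ for $n\geq 2$. So you must unroll the quasinorm through the recursion $y_m^{(r+1)}=y_m^{(r)}-\Phi_{m-r}\,y_r^{(r)}$. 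Then show that all correction terms are eventually pointwise negligible against $\Phi_m(x)$, using $\varphi_j\circ\varphi_i=o(\varphi_{i+j})$ at $0$.
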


The spaces that appear in (3) are the Orlicz ``sequence'' spaces associated with the functions $t\mapsto \log^l(1/t)$ for $t>0$ near zero.
Thus, our main result not only describes the Serre closure of Banach spaces, but also provides the fundamental examples. The fact that something as simple as the behaviour of the sequence $c(n;X)$
characterizes the spaces in $\langle\mathbf{B}\rangle_\mathbf{Q}$
might be seen as a stroke of luck, but after all, an arbitrary quasi-Banach space carries no structure beyond addition and the quasinorm,
so what else could we possibly look at? The remainder of the paper is devoted to explaining the proof of the theorem and some concepts involved. The proof that a quasi-Banach space in the Serre closure of Banach spaces must satisfy the upper estimate (2) is mostly boring calculations with quasilinear maps, culminating in Proposition \ref{1implies2}. The proof that the upper estimate (2) forces $X$ to be a quotient of some Orlicz space is contained in the not-so-straightforward calculations leading to Proposition \ref{2implies3}. Finally, that the Orlicz spaces in (3) belong to $\langle\mathbf{B}\rangle_\mathbf{Q}$ relies critically on complex interpolation theory for quasi-Banach spaces plus the study of the higher  ``derived''  spaces developed in Section \ref{3implies1}.\smallskip

In order to make the paper self-contained, Section
\ref{serreclosure} contains the quasi-Banach-space versions of the categorical
facts we need, Section~\ref{subsec:quasilinear} contains the necessary
background on quasilinear maps and twisted sums, and Section~\ref{3implies1}
presents the complex-interpolation facts used in the proof.\medskip

The obvious ancestor of our work is \cite{kalton81}, one of Kalton's many other masterpieces. Kalton focuses on the first level of the Serre closure, where his results are unbeatable: he proves that a quasi-Banach space $X$ is a quotient of a subspace of an extension of two Banach spaces if and only if it is a quotient of $\ell(f_1)$ and that this happens if and only if $X$ is log-convex, which means that
$$
\|x_1+\dots+x_n\|\leq M\sum_{i\leq n}\|x_i\|(1+\log i)
$$
for some constant $M$, every $n\in\mathbb N$ and every
$x_1, \dots, x_n\in X$; see Section~3 (especially Theorems 3.3 and 3.6) and Theorem 7.2 of the aforementioned paper.

Finally, if the reader is wondering what AI has ever done for us, see at Lemma \ref{lem:compactness}.

\section{The Serre closure in a quasi-Abelian category}\label{serreclosure}

The reader can skip this section if he got past the title without a sudden fright. A category $\bf C$ is called \emph{additive} if:
\begin{itemize}
    \item Each set $\operatorname{Hom}(A, B)$ is enriched with a structure of Abelian group in such a way that composition distributes over addition.
\item There exists a zero object (one which is both initial and terminal) in $\bf C$.
\item $\bf C$ has finite products (which are automatically direct sums).
\end{itemize}

An additive category that has all kernels and cokernels (hence, all finite limits, in particular pullbacks and pushouts) is called \emph{pre-Abelian}. A pre-Abelian category is
\emph{quasi-Abelian} if it satisfies the following mutually dual conditions concerning commutative squares
$$
\xymatrix{A \ar[r] & B \\
C \ar[r] \ar[u] & D \ar[u]}
$$
\begin{itemize}
    \item If the square is a pushout diagram and $A\to B$ is a kernel, then so is $C\to D$.
    \item If the square is a pullback diagram and $A\to B$ is a cokernel, then so is $C\to D$.
\end{itemize}

\subsection{Serre classes}\label{sec:serre classes}
The notion of a short exact sequence can be succinctly defined in any additive category as a kernel-cokernel pair. That is, a sequence of morphisms $A \to B \to C$ is a short exact sequence (we omit the initial and final zeros) if $A \to B$ is the kernel of $B \to C$, and $B \to C$ is the cokernel of $A \to B$. Let $\bf C$ be a quasi-Abelian category and let $\bf S$ be a class of objects of $\bf C$ that we regard as a full subcategory. As we said in the introduction, $\bf S$ is a Serre class if, whenever $A \to B \to C$ is a short exact sequence in $\bf C$, the middle object $B$ is in $\bf S$ if and only if $A$ and $C$ are.\medskip

Regarding the meaning of an \emph{extension}, let us say that $A$ is a subobject of $B$ if there is a morphism $A \to B$ which is a kernel, and that $C$ is a quotient of $B$ if there is a morphism $B \to C$ which is a cokernel. We say that $B$ is an extension of $C$ by $A$ if it fits into a short exact sequence $A \to B \to C$. The fact that in homological algebra the term ``extension'' usually refers to the exact sequence itself, rather than to the middle object, will not cause any confusion, we hope.

Thus, $\bf S$ is a Serre class if (and only if) it is closed under subobjects, quotients and extensions.
In general one can define $\langle{\bf S}\rangle_{\bf C}$, the Serre closure of $\bf S$ in $\bf C$ as the smallest Serre subclass of $\bf C$ that contains $\bf S$. If there is no risk of confusion, we will omit $\bf C$.

To illustrate how this works and to fix our notation, let us consider some examples arising from Functional Analysis:

\begin{itemize}
    \item The category $\bf TVS$ of (Hausdorff) topological vector spaces and continuous operators is quasi-Abelian and so is $\bf F$, the full subcategory of complete, metrizable linear spaces.
    \item The class $\bf Q$ of quasi-Banach spaces (equivalently, complete, metrizable, locally bounded spaces) is a Serre subclass of $\bf F$.
    \item The class $\bf B$ of Banach spaces fails to be Serre in $\bf Q$.
    \item The class $\bf H$ of Hilbert spaces fails to be Serre in $\bf B$.
\end{itemize}

Other Serre and non-Serre classes will appear liberally sprinkled throughout the paper.

Serre classes are mostly studied in Abelian categories. In fact, Serre’s ``foundational'' paper \cite{serre} deals with Abelian groups, the most Abelian of all categories, but see also \cite{kanda}. However, since our study takes place in a quasi-Abelian category for obvious reasons, we present some direct consequences of the definitions in this setting.
Before we proceed, let us define three operations on classes. Let $\mathbf{C}$ be a quasi-Abelian category and $\mathbf{D}, \bf E$ be classes of objects of $\bf C$, which we view as full subcategories:

\begin{itemize}
    \item $\mathscr{S}(\mathbf{D})$ is the class of subobjects of objects in $\mathbf{D}$.
    \item $\mathscr{Q}(\mathbf{D})$ is the class of quotients of objects in $\mathbf{D}$.
    \item $\mathscr{E}(\mathbf{D}, \mathbf{E})$ consists of those $C\in\mathbf C$ that fit into a short exact sequence of the form $E \to C \to D$, where $D \in \mathbf{D}$ and $E \in \mathbf{E}$. If $\mathbf{D} = \mathbf{E}$, we simply write $\mathscr{E}(\mathbf{D})$.
\end{itemize}

\begin{lemma}
    Let $\mathbf{D}, \bf E$ and $\bf G$ be classes of objects of a quasi-Abelian category $\bf C$.
    \begin{itemize}
        \item[{(a)}] $\mathscr{SQ}(\bf D)=\mathscr{QS}(\bf D)$.
     \item[{(b)}] $\mathscr E(\mathscr E({\bf D}, {\bf E}), {\bf G})= \mathscr E({\bf D}, \mathscr E({\bf E}, {\bf G}))$.
    \end{itemize}
\end{lemma}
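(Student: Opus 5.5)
For (a), I would prove both inclusions directly from the two quasi-Abelian axioms. Take $X\in\mathscr{SQ}(\mathbf D)$, so there are a cokernel $q\colon D\to C$ with $D\in\mathbf D$ and a kernel $j\colon X\to C$. Form the pullback $P$ of $q$ and $j$. By the second axiom (pullbacks preserve cokernels), the induced morphism $P\to X$ is a cokernel. The morphism $P\to D$ is a kernel, since pullbacks of kernels are kernels in any category with finite limits: $P\to D$ is the kernel of the composite $D\to C\to\operatorname{Coker}(j)$, using that $j=\ker(\operatorname{coker} j)$. Hence $P\in\mathscr S(\mathbf D)$ and $X\in\mathscr{QS}(\mathbf D)$. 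The reverse inclusion is the dual argument. Given a kernel $P\to D$ and a cokernel $P\to X$, form the pushout $C$ of $D\leftarrow P\to X$. The first axiom makes $X\to C$ a kernel, and $D\to C$ is a cokernel because pushouts of cokernels are cokernels. Thus $X\in\mathscr{SQ}(\mathbf D)$.

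For (b), the conventions matter. $C\in\mathscr E(\mathbf D,\mathbf E)$ means there is a short exact sequence $E\to C\to D$. So $X\in \mathscr E(\mathscr E(\mathbf D,\mathbf E),\mathbf G)$ means there is a sequence $G\to X\to Y$ together with a sequence $E\to Y\to D$. The target is a sequence $Z\to X\to D$ together with a sequence $G\to Z\to E$.

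For the inclusion "$\subseteq$", let $p\colon X\to Y$ and $r\colon Y\to D$ be the cokernels, and put $s=rp$. I first check that $s$ is a cokernel. In a quasi-Abelian category, composites of cokernels are cokernels; this is standard (Schneiders) and can be proved with the pullback axiom, but I would give a direct argument to keep the paper self-contained. Let $Z=\Ker s$. Then $G=\Ker p$ lies in $Z$, and $p$ restricts to $Z\to \Ker r = E$. I then show $G\to Z\to E$ is short exact. Observe that $Z$ is the pullback of $p$ along the kernel $E\to Y$: indeed $\Ker(rp)=p^{-1}(\Ker r)$. So $Z\to E$ is a cokernel by the pullback axiom. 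Its kernel is $\Ker p\cap Z=G$. This gives $X\in\mathscr E(\mathbf D,\mathscr E(\mathbf E,\mathbf G))$.

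For the inclusion "$\supseteq$", we are given kernels $G\to Z$ and $Z\to X$ with cokernels $Z\to E$ and $X\to D$. I form $Y$ as the cokernel of the composite $G\to X$. Composites of kernels are kernels (the dual statement), so $G\to X\to Y$ is short exact. Next I would consider the pushout of $Z\to E$ along $Z\to X$; this is exactly $Y$, because both are cokernels of $G\to X$. The first axiom then makes $E\to Y$ a kernel. The induced map $Y\to D$ is a cokernel, since it is a factor of the cokernel $X\to D$ through the epimorphism $X\to Y$. Its kernel is $E$ by the pushout property. Hence $E\to Y\to D$ is short exact and $X\in\mathscr E(\mathscr E(\mathbf D,\mathbf E),\mathbf G)$.

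The main obstacle is justifying in the quasi-Abelian setting that composites of cokernels (respectively kernels) are again cokernels (kernels), and the "third isomorphism" identifications of kernels above. In $\mathbf Q$ itself all of this can be checked concretely with the open mapping theorem, and I would mention this as a fallback.
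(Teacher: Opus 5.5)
Your proposal is correct and follows the paper's proof. In (a) you use the pullback, and dually the pushout, exactly as the paper does; in (b) your $Z=\Ker(rp)$ is the pullback object in the paper's diagram, and your pushout of $Z\to E$ along $Z\to X$ for the reverse inclusion is what the paper means by ``the diagram is also a pushout diagram.'' You also spell out what the paper leaves to ``observing the diagram'': composites of cokernels (resp.\ kernels) are again cokernels (resp.\ kernels) in a quasi-Abelian category, which is needed for $X\to D$ (resp.\ $G\to X$) to sit in a short exact sequence.
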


\begin{proof}
(a) Assume that $S$ is a subobject of $Q$, which in turn is a quotient of $D$, an object of $\bf D$. Then the pullback diagram
$$
\xymatrix{D \ar[r] & Q \\
\PB \ar[r] \ar[u] & S \ar[u]}
$$
and the very definition of quasi-Abelian category yield that $S$ is also a quotient of a subobject of $D$. The other inclusion follows by (categorical) duality, using pushouts.

(b) Assume that $H$ belongs to $\mathscr E(\mathscr E({\bf D}, {\bf E}), {\bf G})$ and let $G \to H \to F$ and $E \to F \to D$ be short exact sequences witnessing it. Observe the diagram
$$
\xymatrix{& D \ar@{=}[r] & D \\
G \ar[r] \ar@{=}[d] & H \ar[r] \ar[u] & F \ar[u] \\
G \ar[r] & \PB \ar[r] \ar[u] & E \ar[u]}
$$
until it becomes clear that $H\in \mathscr E({\bf D}, \mathscr E({\bf E}, {\bf G}))$. The other containment follows by duality, just using pushouts --- actually the diagram is \emph{also} a pushout diagram!
\end{proof}

The preceding lemma is the quasi-Abelian counterpart of the first two items of \cite[Proposition~2.4]{kanda}. The third part also works: define ${\bf D}^{(n)}$ taking ${\bf D}^{(0)}$ as the zero objects and then ${\bf D}^{(n+1)}= \mathscr E({\bf D}, {\bf D}^{(n)})$. Then $\bigcup_{n\geq 0} {\bf D}^{(n)}$ is the smallest class that is closed by extensions and contains $\bf D$ just because one has  ${\bf D}^{(n+k)}= \mathscr E({\bf D}^{(n)}, {\bf D}^{(k)})$ for all $n,k\geq 0$. We shall show later, by means of counterexamples, that the remainder of \cite[Proposition~2.4]{kanda} fails in quasi-Abelian categories.
Thus, we must reach the Serre closure in another way. Assume that $\bf D$ is a class of objects of $\bf C$ we define $({\mathscr{S\hspace{-3pt}QE}})^{n}\bf D$ (this time the superscript does not take parentheses!) letting $({\mathscr{S\hspace{-3pt}QE}})^{0}(\bf D)=\bf D$ and then
$$
({\mathscr{S\hspace{-3pt}QE}})^{n+1}{\bf D}=
{\mathscr{S\hspace{-3pt}QE}}({\mathscr{S\hspace{-3pt}QE}})^{n}{\bf D}=
(\mathscr{S}(\mathscr{Q}(\mathscr{E}({\mathscr{S\hspace{-3pt}QE}})^{n}{\bf D})))).
$$
Note that, according to our definitions, $({\mathscr{S\hspace{-3pt}QE}})^{n}{\bf D}$ contains ${\bf D}^{(2^{n})}$. The following result should be obvious % easy
by now:

\begin{proposition} If $\bf C$ is quasi-Abelian, then $\langle {\bf D}\rangle_{\bf C} = \bigcup_{n\geq 0} ({\mathscr{S\hspace{-3pt}QE}})^{n}\bf D$.
\end{proposition}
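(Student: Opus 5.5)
We prove the two inclusions separately. Write $\mathbf U=\bigcup_{n\geq0}({\mathscr{S\hspace{-3pt}QE}})^{n}\mathbf D$.

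\emph{The inclusion $\mathbf U\subseteq\langle\mathbf D\rangle_{\mathbf C}$.} Any Serre class $\mathbf S$ containing $\mathbf D$ is closed under $\mathscr S$, $\mathscr Q$ and $\mathscr E$. Induction on $n$ then gives $({\mathscr{S\hspace{-3pt}QE}})^{n}\mathbf D\subseteq\mathbf S$. Taking $\mathbf S=\langle\mathbf D\rangle_{\mathbf C}$ yields $\mathbf U\subseteq\langle\mathbf D\rangle_{\mathbf C}$.

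\emph{Preliminary observations.} Before the reverse inclusion we record some elementary facts about the three operators. Each operator is monotone. Each is also inflationary on classes closed under isomorphism, since $\mathrm{id}_A$ is a kernel and a cokernel and $0\to A\to A$ is exact. Hence $\mathbf{E}\subseteq\mathscr{E}(\mathbf E)$, $\mathbf E\subseteq\mathscr Q(\mathbf E)$ and $\mathbf E\subseteq\mathscr S(\mathbf E)$. It follows that
$$
({\mathscr{S\hspace{-3pt}QE}})^{n}\mathbf D\subseteq({\mathscr{S\hspace{-3pt}QE}})^{n+1}\mathbf D,
$$
so $\mathbf U$ is an increasing union. (If $\mathbf D$ is not isomorphism-closed, first replace it by its isomorphism closure; $\mathscr S$ already does this.)

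\emph{The inclusion $\langle\mathbf D\rangle_{\mathbf C}\subseteq\mathbf U$.} It suffices to show that $\mathbf U$ is a Serre class, since it clearly contains $\mathbf D$. We check the three closure properties in turn.
\begin{itemize}
\item Closure under subobjects: if $A\to B$ is a kernel and $B\in({\mathscr{S\hspace{-3pt}QE}})^{n}\mathbf D$, then $A\in\mathscr S({\mathscr{S\hspace{-3pt}QE}})^{n}\mathbf D\subseteq({\mathscr{S\hspace{-3pt}QE}})^{n+1}\mathbf D$. This uses monotonicity and that $\mathscr Q$ and $\mathscr E$ are inflationary, together with $\mathscr{SS}=\mathscr S$.
\item Closure under quotients: if $B\to C$ is a cokernel, then $C\in\mathscr Q({\mathscr{S\hspace{-3pt}QE}})^{n}\mathbf D$. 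This lies in $\mathscr{QS}(\cdots)=\mathscr{SQ}(\cdots)$ by part (a) of the Lemma, hence in $\mathscr{SQE}({\mathscr{S\hspace{-3pt}QE}})^{n}\mathbf D$.
\item Closure under extensions: given $A\to B\to C$ exact with $A\in({\mathscr{S\hspace{-3pt}QE}})^{n}\mathbf D$ and $C\in({\mathscr{S\hspace{-3pt}QE}})^{m}\mathbf D$, let $k=\max(n,m)$. Since the union is increasing, both lie in $({\mathscr{S\hspace{-3pt}QE}})^{k}\mathbf D$, so $B\in\mathscr E({\mathscr{S\hspace{-3pt}QE}})^{k}\mathbf D\subseteq({\mathscr{S\hspace{-3pt}QE}})^{k+1}\mathbf D$.
\end{itemize}

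\emph{The facts still to be justified.} The argument relies on $\mathscr{SS}=\mathscr S$ and $\mathscr{QQ}=\mathscr Q$, i.e.\ that composites of kernels are kernels and composites of cokernels are cokernels. I expect this to be the only step requiring genuine care, since we are in a quasi-Abelian rather than an Abelian category. My approach is to use that in a pre-Abelian category a kernel is the same as a strict monomorphism: the kernel of its cokernel. In quasi-Abelian categories, strict monomorphisms are closed under composition; this is a standard fact (Schneiders) and follows from the pushout axiom. Dually, the pullback axiom gives closure of cokernels under composition. With these in hand, the subobject step goes through verbatim, and the quotient step uses $\mathscr{QQ}=\mathscr Q$ after commuting via part (a) of the Lemma.
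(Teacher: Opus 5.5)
Your proof is correct, and it is exactly the routine verification the paper leaves as ``obvious by now'': the levels form an increasing chain, and the union is closed under $\mathscr S$, $\mathscr Q$ and $\mathscr E$ at the cost of one extra level. Two small corrections: the inclusion $\mathbf E\subseteq\mathscr E(\mathbf E)$ needs $0\in\mathbf E$, which can fail for $\mathbf D$ itself (harmless, since $A$ is a quotient of $A\oplus A\in\mathscr E(\mathbf D)$ and every level $n\geq 1$ contains $0$); and the identities $\mathscr S\mathscr S=\mathscr S$, $\mathscr Q\mathscr Q=\mathscr Q$ and part (a) of the Lemma are never actually needed, because for any class $\mathbf X$ containing $0$, monotonicity and inflation already give $\mathscr S\mathbf X\cup\mathscr Q\mathbf X\subseteq\mathscr S\mathscr Q\mathscr E\mathbf X$.
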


\subsection{Quasi-Banach spaces}
We will now provide a few more definitions here, just in case the reader does not know or does not remember them.
General references on quasi-Banach spaces are \cite{kpr, K-quasi} and \cite{HMBST}, the latter being more categorically oriented. % which is more categorically oriented.

Let $X$ be a linear space over $\mathbb K$, which can be either $\mathbb R$ or $\mathbb C$. A quasinorm on $X$ is a function $\|\cdot\|: X\to [0,\infty)$
that vanishes only at the origin, is ``positively homogeneous'' in the sense that  $\|\lambda x\|=|\lambda|\|x\|$ for every $x\in X$ and  every $\lambda\in\mathbb K$ and satisfies the following weak version of the triangle inequality: there is a constant $c\geq 1$ such that
\begin{equation*}
\|x+y\|\leq c(\|x\|+\|y\|)
\end{equation*}
for every $x,y\in X$. The smallest $c$ satisfying the preceding inequality  is often called the ``modulus of concavity'' of the quasinorm. Each quasinorm induces a linear topology on $X$ characterized by the fact that the unit ball $B=\{x\in X: \|x\|\leq 1\}$ is a neighbourhood of the origin. If $X$ is complete with respect to the associated uniform structure, we call it a quasi-Banach space.

A $p$-norm, $p\in(0,1]$, is a quasinorm that additionally satisfies $\|x+y\|^p\leq \|x\|^p+\|y\|^p$. The Aoki--Rolewicz theorem states that each quasinorm is equivalent to a $p$-norm for some $p\in(0,1]$.
The meaning of $p$-normed and $p$-Banach space should be obvious. If $p=1$ we speak of norms, normed spaces and Banach spaces.\medskip

Throughout the paper, we will use the following conventions: if $f$ and $g$ are functions defined on the same set $S$ we write $f(s)\lesssim g(s)$ to mean that there exists a constant $M>0$ such that $f(s)\leq Mg(s)$ for every $s\in S$, and $f(s)\sim g(s)$ means that
$f(s)\lesssim g(s)$ and $g(s)\lesssim f(s)$. Most of the time our formulae will contain more than one variable, or parameters, in which case we write, as before, $f(s, t)\lesssim g(s, t)$ to mean that there exists a constant $M>0$ such that $f(s,t)\leq Mg(s,t)$ for every $(s,t)\in S\times T$, while $f(s, t)\lesssim g(s, t)$ for fixed $t$ means that for every $t\in T$ there is $M$, possibly depending on $t$, such that $f(s,t)\leq Mg(s,t)$ for every $s\in S$.

\section{Sizes of sums in quasi-Banach spaces --- Proof of $(1)\Longrightarrow
(2)$}

From now on, we focus exclusively on the Serre closure of Banach spaces  within the category of quasi-Banach spaces. In this context, kernels are always called embeddings and subobjects are called subspaces. We will keep this convention.
Admittedly, in this context extensions are often called twisted sums, a convention that we shall not follow. On the other hand, the open mapping theorem guarantees that a sequence of quasi-Banach spaces and operators is exact in $\bf Q$ if and only if it is exact in the category of vector spaces.\medskip

%Our study is based on the asymptotic behaviour of sums. %Given two sequences of real numbers $(f(n))_n$ and $(g(n))_n$ we will write
%$$f\lesssim g$$
%to mean that $f(n)\leq C g(n)$ for some constant $C>0$ and all $n$. I.e., $f=O(g)$, with the bonus that we will often write the more descriptive
%$f(n)\lesssim g(n)$ (that we cannot write with $O(\cdot)$). Given two functions, $f,g$, we will write $f\sim g$ to mean that there are constants $a,b$ such that $af(x)\leq g(x)\leq bf(x)$.

Recall from the introduction that, given a quasi-Banach space $X$ and $n\geq 2$, we denote by $c(n;X)$ the least number $c$ such that
$$
\|x_1+\dots+x_n\|\leq c \left( \|x_1\|+\dots+\|x_n\|  \right).
$$
whenever $x_1,\dots,x_n\in X$. Obviously $c(1)$ would be $1$, but we prefer to exclude this case. When the space $X$ is clear from the context, we will simply write $c(n)$. Some nearly trivial remarks on $c(n;X)$ follow:
\begin{itemize}
    \item $c(2; X)$ is the modulus of concavity of $X$.
    \item $c(n;X)$ is increasing and submultiplicative: $c(nk; X)\leq c(n;X)c(k;X)$.
    \item $\sup_n c(n;X)<\infty$ if and only if $X$ is isomorphic to a Banach space.
    \item If $Y$ is is isomorphic to either a subspace or a quotient of $X$, then $c(n;Y)\lesssim c(n;X)$.
\end{itemize}
The subspace part is immediate. The part about quotients is as follows: If $\pi:X\to Y$ is a quotient map, then there exists $M$ such that for every $y\in Y$ one can pick $x\in X$ such that $\pi x=y$, with $\|x\|\leq M\|y\|$. Now, if $y_1,\dots,y_n\in Y$, take $x_1,\dots,x_n\in X$ such that $\pi x_i=y_i$ with $\|x_i\|\leq M\|y_i\|$. Then
$$
\left\|\sum_{i\leq n} y_i\right\|=
\left\|\pi \left(\sum_{i\leq n} x_i\right)\right\|\leq \|\pi\| c(n;X) \sum_{i\leq n} \|x_i\|\leq M\|\pi\|c(n;X) \sum_{i\leq n} \|y_i\|.
$$

\subsection{Quasilinear maps}\label{subsec:quasilinear}
We have just seen that the sequence of constants $c(n)$
cannot grow when taking subspaces or quotients. To study its behaviour under the formation of extensions (the other procedure that must be controlled), we need a manageable description of them. The best approach we have found is to use quasilinear functions, which we now present without hesitation. \medskip

Let $X$ and $Y$ be quasinormed spaces and let $W$ be a linear space containing $Y$ in the purely algebraic sense (actually %as we shall see very soon
one may assume $W=Y$, but some flexibility is convenient here). A homogeneous mapping $F:X\to W$ (not $Y$) is said to be quasilinear from $X$ to $Y$ if:
\begin{itemize}
    \item For every $x,x'\in X$ one has $F(x+x')-F(x)-F(x')\in Y$.
    \item There is a constant $d$ such that $\|F(x+x')-F(x)-F(x')\|\leq d(\|x\|+\|x'\|)$ for all $x,x'\in X$.
\end{itemize}
The smallest possible $d$ for which the preceding inequality holds is traditionally called the quasilinearity constant of $F$, denoted by $Q(F)$. Given such a quasilinear map $F$, we can construct an extension of $X$ by $Y$ as follows. We consider the linear space
$
Y\oplus_F X =\{(w,x)\in W\times X: w-F(x)\in Y\}
$
 endowed with the quasinorm
$$
\|(w,x)\|_F = \|w-F(x)\|+\|x\|,
$$
which is called, with good reason, the quasinorm induced by $F$. One then has an exact sequence
$$
\xymatrix{0\ar[r]&Y \ar[r]^-\imath & Y \oplus_F X \ar[r]^-\pi & X\ar[r]&0}
$$
in which $\imath(y)=(y,0)$ and $\pi(w,x)=x$. Moreover, $\imath$ preserves the quasinorm, and $\pi$ maps the unit ball of $Y\oplus_F X$
onto that of $X$. The space $Y\oplus_F X$ is complete when $X$ and $Y$ are (see \cite[Proposition 2.3.4]{HMBST}) and, by \cite[Proposition 3.3.7]{HMBST}:

\begin{lemma}\label{lem:Z=YFX} Every extension of $X$ by $Y$ is isomorphic to one of the form $Y\oplus_F X$ for some quasilinear map $F:X\to Y$. Moreover, if $F:X\to Y$ is quasilinear then for every scalar $a\neq 0$ the map $Y\oplus_{aF} X\to Y\oplus_F X$ sending $(y,x)$ to $(y,ax)$ is an isomorphism. \end{lemma}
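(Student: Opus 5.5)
To prove Lemma \ref{lem:Z=YFX}, fix an exact sequence $0\to Y\xrightarrow{j} Z\xrightarrow{q} X\to 0$ of quasi-Banach spaces. The plan is to build the map $F$ from a homogeneous bounded selection for $q$ and a linear selection for $q$, and then compare quasinorms. By the open mapping theorem, $q$ is open. Hence there is a constant $M$ such that every $x\in X$ has a preimage of quasinorm at most $M\|x\|$. Choosing these preimages on a set of representatives of the rays (the scalar orbits $\{\lambda x\}$, using the axiom of choice) and extending by homogeneity gives a homogeneous map $B:X\to Z$ with $qB=\mathrm{id}_X$ and $\|Bx\|\le M\|x\|$. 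We also choose a linear map $L:X\to Z$ with $qL=\mathrm{id}_X$; this is purely algebraic and needs no continuity.

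Identify $Y$ with $j(Y)=\ker q$ and set $F=B-L$. Then $q(Bx-Lx)=0$, so $F$ takes values in $Y$. The difference
$$F(x+x')-F(x)-F(x')=B(x+x')-Bx-Bx'$$
also lies in $\ker q=Y$, because $L$ is linear. Its quasinorm is at most $c^2M(\|x+x'\|+\|x\|+\|x'\|)\lesssim \|x\|+\|x'\|$, where $c$ is the modulus of concavity of $Z$. So $F$ is quasilinear from $X$ to $Y$ (here with $W=Y$). Next define $T:Y\oplus_F X\to Z$ by $T(y,x)=j(y)+Lx$. This map is linear and bijective: its inverse is $z\mapsto (z-Lqz,\,qz)$, and $z-Lqz\in\ker q$. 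It also commutes with the inclusions of $Y$ and the projections onto $X$.

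The remaining point is that $T$ is an isomorphism, and this is the main (though mild) obstacle, because $L$ is not bounded. The trick is to rewrite $T(y,x)=(y-Fx)+Bx$, which is valid since $Fx=Bx-Lx$ and $y-Fx\in Y$. This gives
$$\|T(y,x)\|\le c\bigl(\|y-Fx\|+M\|x\|\bigr)\lesssim\|(y,x)\|_F.$$
For the reverse bound, put $z=T(y,x)$. Then $\|x\|=\|qz\|\le\|q\|\|z\|$. Moreover $y-Fx=z-Bqz$, so $\|y-Fx\|\le c(1+M\|q\|)\|z\|$. Thus $T$ is an isomorphism. Completeness of $Y\oplus_F X$ is not needed here, since $T$ transfers it from $Z$. (If one insists on citing rather than proving, this is exactly \cite[Proposition 3.3.7]{HMBST}.)

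For the second assertion, the map $(y,x)\mapsto(y,ax)$ sends $Y\oplus_{aF}X$ into $Y\oplus_F X$. Indeed, $y-aF(x)\in Y$ means exactly that $y-F(ax)\in Y$, by homogeneity of $F$. The map is clearly a linear bijection, with inverse $(w,x)\mapsto(w,a^{-1}x)$. Finally,
$$\|(y,ax)\|_F=\|y-F(ax)\|+|a|\|x\|=\|y-aF(x)\|+|a|\|x\|,$$
which lies between $\min(1,|a|)\|(y,x)\|_{aF}$ and $\max(1,|a|)\|(y,x)\|_{aF}$. So the map is an isomorphism.
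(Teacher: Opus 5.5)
Your proof is correct. It is the standard construction behind the result the paper cites without proof, \cite[Proposition 3.3.7]{HMBST}: take $F=B-L$, with $B$ a bounded homogeneous section and $L$ a linear section of the quotient map, and show that $(y,x)\mapsto y+Lx$ is an isomorphism via $y+Lx=(y-Fx)+Bx$. The scaling assertion is the routine check you give, and the details you leave implicit are immediate: $F$ is homogeneous, and $j$ is an isomorphism onto $\ker q$ by the open mapping theorem.
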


We now introduce another sequence associated with $F:X\to Y$ that will allow us to control $c(n; Y\oplus_F X)$.

\adef Given a quasilinear map $F:X\to Y$ and $n\geq 2$ we denote by $d(n;F)$
the least constant $d$ such that
$$
\|F(x_1+\dots+x_n)- Fx_1- \dots-Fx_n\|\leq d \left( \|x_1\|+\dots+\|x_n\|  \right)
$$
whenever $x_i\in X$ for $i\leq n$.% are vectors of  $X$.
\zdef

If the quasilinear map $F$ is clear from the context, we will just write $d(n)$. The sequence $d(n;F)$ is monotone increasing and  $d(2;F)=Q(F)$.

\begin{lemma}\label{lem:c and d} Let $F:X\to Y$ be a quasilinear map. Then:
\begin{itemize}
    \item[(a)] $c(n;Y\oplus_F X)\leq
\max\Big\{  c(2;Y)c(n;Y), c(2;Y)d(n;F)+ c(n;X) \Big\}$
    \item[(b)] $
d(2n; F)\leq c(3;Y)\Big( d(2;F) c(n;X) + d(n;F) \Big)
$ .
\item[(c)] If $d(2;F)\leq 1$, then for every $j$ one has $$
    d(2^j; F) \leq c(j; Y) \sum_{0\leq l <j} c(2^l; Y) c(2^{j-l-1}; X).$$
\end{itemize}
\end{lemma}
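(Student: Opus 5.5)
For (a), I will take $(w_i,x_i)\in Y\oplus_F X$, $i\le n$, and write $w=\sum w_i$, $x=\sum x_i$. The plan is to split
$$
w-F(x)=\sum_i\big(w_i-F(x_i)\big)+\Big(\sum_i F(x_i)-F(x)\Big).
$$
The first sum lies in $Y$ and has norm at most $c(n;Y)\sum\|w_i-Fx_i\|$. The second has norm at most $d(n;F)\sum\|x_i\|$. Applying the quasi-triangle inequality in $Y$ once, with constant $c(2;Y)$, and adding $\|x\|\le c(n;X)\sum\|x_i\|$, gives
$$
\|(w,x)\|_F\le c(2;Y)c(n;Y)\sum\|w_i-Fx_i\|+\big(c(2;Y)d(n;F)+c(n;X)\big)\sum\|x_i\|.
$$
Bounding each coefficient by the maximum yields the claim.

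For (b), I will split $2n$ vectors into $n$ pairs $u_j=x_{2j-1}+x_{2j}$. Then
$$
F\Big(\sum x_i\Big)-\sum Fx_i=\Big[F\Big(\sum u_j\Big)-\sum Fu_j\Big]+\sum_j\big[Fu_j-Fx_{2j-1}-Fx_{2j}\big].
$$
The first bracket is bounded by $d(n;F)\sum\|u_j\|\le d(n;F)c(2;X)\sum\|x_i\|$. This has the wrong shape, so I will group the pieces differently: first combine odd vectors and even vectors, or rather use the three-term decomposition
$$
F(a+b)-Fa-Fb,\quad a=\sum_{\rm odd}x_i,\ b=\sum_{\rm even}x_i,
$$
plus $F(a)-\sum_{\rm odd}Fx_i$ and $F(b)-\sum_{\rm even}Fx_i$. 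This gives three elements of $Y$, whence the factor $c(3;Y)$. Their norms are at most
\begin{itemize}
\item $d(2;F)(\|a\|+\|b\|)\le d(2;F)c(n;X)\sum\|x_i\|$,
\item $d(n;F)\sum_{\rm odd}\|x_i\|$,
\item $d(n;F)\sum_{\rm even}\|x_i\|$.
\end{itemize}
Summing gives exactly $c(3;Y)\big(d(2;F)c(n;X)+d(n;F)\big)$. Since $d$ is monotone, the case where fewer than $2n$ vectors are used is covered, and padding with zeros handles it anyway.

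For (c), I will iterate a binary-tree version of the same idea rather than (b), since (b) compounds the factor $c(3;Y)$. Given $2^j$ vectors, I will build the full binary tree of partial sums. At level $l$, for $0\le l<j$, there are $2^l$ nodes. Each node $s$ is the sum of $2^{j-l}$ consecutive vectors and splits into two children $s',s''$, and
$$
F(\text{total})-\sum Fx_i=\sum_{l<j}\ \sum_{\text{nodes at level }l}\big(F(s)-F(s')-F(s'')\big),
$$
a telescoping sum. For level $l$, the inner sum consists of $2^l$ terms of $Y$, each of norm at most $d(2;F)(\|s'\|+\|s''\|)\le\|s'\|+\|s''\|$. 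Each child is a sum of $2^{j-l-1}$ vectors, so its norm is at most $c(2^{j-l-1};X)$ times the sum of its $\|x_i\|$. The level-$l$ sum therefore has norm at most $c(2^l;Y)c(2^{j-l-1};X)\sum\|x_i\|$. Finally, the $j$ level-sums are combined in $Y$ at cost $c(j;Y)$, which gives the stated bound.

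The main obstacle is just getting the grouping right so that the constants match the statement, notably in (b), where the odd/even split rather than the pairing split is needed. The rest is bookkeeping.
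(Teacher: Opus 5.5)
Your proposal is correct and follows the paper's own proof essentially step for step. In (a) you use the same splitting of $w-F(x)$ into $\sum_i (w_i-Fx_i)$ and $-\bigl(F(x)-\sum_i Fx_i\bigr)$. In (b) you use the same three-term decomposition over two groups of $n$ vectors; the paper takes the first and last $n$ vectors rather than odd/even, which makes no difference, and your initial pairing attempt can simply be dropped. In (c) you use the same dyadic-tree telescoping identity, with the level-$l$ sums bounded by $c(2^l;Y)\,c(2^{j-l-1};X)\sum\|x_i\|$ and then combined at cost $c(j;Y)$.
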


\begin{proof}
We will sometimes write $\Delta F(x_1,\dots,x_n)= F(\sum_{i\leq n}x_i)- \sum_{i\leq n}Fx_i$ to save space.

 To prove (a), pick $n$ points $(y_i,x_i)$ in $Y\oplus_F X$. Put $y=\sum y_i$ and $x=\sum x_i$, so that $(y,x)=\sum (y_i,x_i)$. We have
\begin{align*}
    \|(y,x)\|_F &= \|y-Fx\|+\|x\| =   \|\sum y_i-\sum Fx_i + \sum Fx_i- Fx\|+\|x\|\\
    &\leq c(2;Y) \left( \|\sum (y_i-Fx_i)\| + \|\Delta F(x_1,\dots,x_n)\|  \right) + c(n;X)\sum \|x_i\|\\
    & \leq c(2;Y) \left( c(n;Y)  \sum\|y_i-Fx_i\| + d(n;F)\sum \|x_i\| \right) + c(n;X)\sum \|x_i\|,
\end{align*}
which is enough. To prove (b), % we may assume $d(2;F)\leq 1$ --- otherwise just divide $F$ by $d(2;F)$.
% BEGIN CODEX CHANGE C40 (local corrections in Proposition 3.1)
 pick $2n$ points in $X$ and label them as $x_1,\dots,x_n; x_1', \dots, x_n'$. Put $x=\sum_{i\leq n}x_i$ and $x'=\sum_{i\leq n}x_i'$. We have
\begin{align*}
&\|\Delta F(x_1, \dots,x_{{n}}, x_1',\dots,x'_{{n}})\|\\
&= \|F(x+x')- F(x)- F(x') + F(x)- \sum Fx_i +F(x')-\sum Fx_i'\|\\
&\leq c(3;Y)\left(\|F(x+x')- Fx-
Fx'\| + \left\|Fx- \sum Fx_i\right\| +\left\|Fx'-\sum Fx_i'\right\|\right)\\
&\leq c(3;Y)\left(d(2;F)(\|x\|+ \|x'\|)+ d(n;F)\sum\|x_i\|+ d(n;F)\sum\|x_i'\| \right)\\
&\leq c(3;Y)\left(d(2;F)c(n;X)\left(\sum\|x_i\|+ \sum\|x_i'\|\right) + d(n;F)\left(\sum\|x_i\|+ \sum\|x_i'\|\right) \right).
\end{align*}

The proof of Part (c) is quite easy.
However, it requires some effort to formalize the process of dividing the $2^j$
vectors from the hypothesis into successive halves until each group consists of a single element. We visualize this process using a dyadic tree of depth $j$ as follows.
$$
\xymatrixcolsep{-0.2pc}
\xymatrix{
&&&&& &&&&&&&&& x_{\varnothing} \ar@{-}[dllllllll]  \ar@{-}[drrrrrrr] \\
&&&&&x_0  \ar@{-}[dlll]  \ar@{-}[drrrr] &&  &&&&&&&&&&&&&&&x_1 \ar@{-}[dllll]  \ar@{-}[drrr]\\
&&x_{00} \ar@{-}[dll]  \ar@{-}[drr] &&&&&&&& x_{01}  \ar@{-}[dll]  \ar@{-}[drr] &&&&&&&&x_{10}  \ar@{-}[dll]  \ar@{-}[drr] &&&&&&&x_{11}  \ar@{-}[dl]  \ar@{-}[drrr] &&&\\
x_{000}&&&& x_{001}&&&&x_{010}&&&&x_{011}&&&&x_{100}&&&& x_{101}&&&&x_{110}&&&&x_{111}&
}
$$

Given an integer $\ell\geq 0$ we regard ${\bf 2}^l= \{0,1\}^l$ as the set of words $a_1\dots a_l$ of length $l$, with $a_k\in\{0,1\}$. If $l=0$ we only have the ``empty word'' $\varnothing$.  Each word $a=a_1\dots a_l$ in ${\bf 2}^l$ has two immediate successors in ${\bf 2}^{l+1}$, namely, $a0=a_1\dots a_l0$ and $a1=a_1\dots a_l1$;  and $2^k$ descendants in  ${\bf 2}^{l+k}$. The dyadic tree of depth $j$ is just $T=\bigcup_{0\leq l\leq j} {\bf 2}^{l}$. Now pick $2^j$ vectors of $X$ indexed by $a\in {\bf 2}^{j}$ and assume without any loss of generality that
 $\sum_{a\in {\bf 2}^{j}}\|x_a\|=1$. We define a tree-map $T\to X$ recursively as follows:
 \begin{itemize}
     \item For $a\in {\bf 2}^{j}$ the value of the map at $a$ is just $x_a$.
     \item For $a\in {\bf 2}^{l}$ with $0\leq l<j$ put $x_a=x_{a0}+ x_{a1}$, the sum of the values of the immediate successors.
 \end{itemize}
Note that each $x_b\in {\bf 2}^{l} $ equals the sum of those $x_a$ whose indices are descendents of  $b$ in ${\bf 2}^{j}$. In particular, $x_\varnothing = \sum_{a\in {\bf 2}^j} x_a$. Moreover
$$
F(x_\varnothing)-\sum_{i\in{\bf 2}^j}F(x_i) =
\sum_{0\leq l<j} \sum_{a\in{\bf 2}^l} \Delta F(x_{a0}, x_{a1}).
$$
Hence, taking into account that $d(2;F)\leq 1$,
\begin{align*}
\left\| F(x_\varnothing)-\sum_{i\in{\bf 2}^j}Fx_i\right\| &\leq c(j;Y)  \sum_{0\leq l<j} \left\|  \sum_{a\in{\bf 2}^l} \Delta F(x_{a0}, x_{a1})\right\| \\
&\leq c(j;Y)  \sum_{0\leq l<j} c(2^l;Y)   \sum_{a\in{\bf 2}^l} \| \Delta F(x_{a0}, x_{a1})\| \\
&\leq c(j;Y)  \sum_{0\leq l<j} c(2^l;Y)   \sum_{a\in{\bf 2}^l} \left( \| x_{a0}\| + \|x_{a1}\| \right). \\
%&\leq c(j;Y)  \sum_{0\leq l<j} c(2^l;Y)   \sum_{a\in{\bf 2}^l} \left( \| x_{a0}\| + \|x_{a1}\| \right)
\end{align*}
But, if $a\in {\bf 2}^l$, then $a0\in {\bf 2}^{l+1}$ and $x_{a0}$ is the sum of the $2^{j-l-1}$ ``initial'' vectors $x_b$ whose indices begin with $a0$, and similarly with $x_{a1}$, so that
$$
 \| x_{a0}\| + \|x_{a1}\|\leq c(2^{j-l-1}; X)\sum_{b} \|x_b\|,
$$
where the sum is taken over the descendents of $a$ in ${\bf 2}^j$.
Therefore,
$$
\sum_{a\in{\bf 2}^l} \left( \| x_{a0}\| + \|x_{a1}\| \right)
\leq c(2^{j-l-1}; X)\sum_{b\in{\bf 2}^j} \|x_b\| = c(2^{j-l-1}; X),
$$
and the result follows.
\end{proof}

We are now ready to estimate the sum constants of iterated extensions of Banach spaces and their relatives. The following result provides manageable upper estimates, the first of which will eventually prove to be optimal.

\begin{proposition}\label{1implies2} Let $Z$ be a quasi-Banach space.
    \begin{itemize}
        \item[(a)]  If $Z$ is in ${\bf B}^{(k)}$ then $c(n;Z)\lesssim \log^{k-1} n$.
         \item[(b)] If $Z$ is in $(\mathscr{S\hspace{-3pt}QE})^k\bf B$ then $c(n;Z)\lesssim\log^{2^k-1+\varepsilon}(n)$ for every $\varepsilon>0$
    \end{itemize}
\end{proposition}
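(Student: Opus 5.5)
We argue by induction in both parts, combining Lemma~\ref{lem:c and d}(c), which bounds $d(n;F)$, with Lemma~\ref{lem:c and d}(a), which converts a bound on $d(n;F)$ into a bound on $c(n;Y\oplus_F X)$. Two preliminary reductions apply throughout.

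First, $c(n;\cdot)$ changes only by a multiplicative constant under isomorphism. This follows from the remarks on subspaces and quotients at the start of the section. So we may freely replace spaces by isomorphic copies and quasinorms by equivalent ones.

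Second, by Lemma~\ref{lem:Z=YFX}, every extension $Y\to Z\to X$ may be written as $Z\simeq Y\oplus_F X$ with $F$ quasilinear. Replacing $F$ by $aF$ for a small $a\neq0$, which does not change $Z$ up to isomorphism, we may assume $d(2;F)\le 1$. This is exactly the hypothesis of Lemma~\ref{lem:c and d}(c).

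Finally, since $c(n)$ and $d(n)$ are increasing, it suffices to bound them at $n=2^j$. For general $n$ we take $j=\lceil\log_2 n\rceil$, so $j\sim\log n$.

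For (a) we induct on $k$. When $k=1$ the space $Z$ is Banach, so $c(n;Z)$ is bounded, i.e.\ $c(n;Z)\lesssim\log^0 n$.

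For the inductive step we use ${\bf B}^{(k+1)}=\mathscr E({\bf B}^{(k)},{\bf B})$. This follows from ${\bf D}^{(n+k)}=\mathscr E({\bf D}^{(n)},{\bf D}^{(k)})$ and lets us place the Banach space \emph{as the subspace}. So $Z=Y\oplus_F X$ with $Y$ Banach and $X\in{\bf B}^{(k)}$, where $c(2^m;X)\lesssim m^{k-1}$. Renorming $Y$ with a genuine norm gives $c(\cdot;Y)=1$; we rescale $F$ afterwards so that still $d(2;F)\le1$.

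Lemma~\ref{lem:c and d}(c) then gives
$$
d(2^j;F)\le\sum_{0\le l<j}c(2^{j-l-1};X)\lesssim\sum_{0\le l<j}(j-l)^{k-1}\lesssim j^k .
$$
Lemma~\ref{lem:c and d}(a) then yields $c(2^j;Z)\lesssim 1+d(2^j;F)+c(2^j;X)\lesssim j^k$, that is, $c(n;Z)\lesssim\log^k n$, as required for ${\bf B}^{(k+1)}$.

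The key point is this choice of orientation. If the Banach space were the quotient, the factor $c(j;Y)$ in Lemma~\ref{lem:c and d}(c) would cost extra logarithmic powers.

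For (b) we prove by induction on $k$ that $c(n;Z)\lesssim\log^{a_k+\varepsilon}n$ for every $\varepsilon>0$, where $a_0=0$ and $a_{k+1}=2a_k+1$, so that $a_k=2^k-1$. Passing to subspaces and quotients does not increase the order of $c(n;\cdot)$, so only the $\mathscr E$ step needs work.

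Let $Z=Y\oplus_F X$ with $X,Y\in(\mathscr{S\hspace{-3pt}QE})^k{\bf B}$ and $d(2;F)\le1$. By induction, $c(2^m;X),\,c(2^m;Y)\lesssim m^{a+\varepsilon}$ with $a=a_k$, and also $c(j;Y)\lesssim\log^{a+\varepsilon}j$. Lemma~\ref{lem:c and d}(c) gives
$$
d(2^j;F)\lesssim \log^{a+\varepsilon}(j)\sum_{0\le l<j}(1+l)^{a+\varepsilon}(j-l)^{a+\varepsilon}\lesssim \log^{a+\varepsilon}(j)\, j^{2a+1+2\varepsilon}.
$$
The factor $\log^{a+\varepsilon}j$ is absorbed into $j^{\varepsilon}$, so $d(2^j;F)\lesssim j^{2a+1+3\varepsilon}$. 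Lemma~\ref{lem:c and d}(a) then bounds $c(2^j;Z)$ by a constant depending on $c(2;Y)$ times
$$
c(2^j;Y)+d(2^j;F)+c(2^j;X)\lesssim j^{2a+1+3\varepsilon}.
$$
Since $\varepsilon$ is arbitrary, this gives exponent $a_{k+1}+\varepsilon$.

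The only delicate points are bookkeeping. We must make sure the constants implicit in $\lesssim$ depend only on the spaces and on $\varepsilon$, never on $j$. We must also make sure the rescaling to get $d(2;F)\le1$ is harmless. Both follow from Lemma~\ref{lem:Z=YFX} and the isomorphism invariance of the order of growth of $c(n;\cdot)$.
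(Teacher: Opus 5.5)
Your proof is correct and follows the paper's argument. It uses the same orientation (Banach space as the subspace, justified by $\mathbf B^{(k+1)}=\mathscr E(\mathbf B^{(k)},\mathbf B)$), the same normalization $d(2;F)\le 1$, and for (b) the same combination of Lemma~\ref{lem:c and d}(c) and (a) with $\log^{a+\varepsilon} j\lesssim j^{\varepsilon}$. The only differences are cosmetic: in (a) the paper iterates the recursion of Lemma~\ref{lem:c and d}(b) (with $c(3;Y)=1$) instead of invoking part (c), which unrolls to the same sum $\sum_{l<j}c(2^l;X)$, and it passes from $n=2^j$ to general $n$ through a $\Delta_2$ argument rather than plain monotonicity.
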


\begin{proof} Observe that if $a$ and $b$ are positive, increasing sequences such that $a(2^n)\lesssim b(2^n)$ and $a$ satisfies the $\Delta_2$-condition (that is: $a(2n)\leq Ca(n)$ for some $C$ and all $n$), then $a(n)\lesssim b(n)$. Indeed, assuming $a(2^n)\leq M b(2^n)$ for all $n$, pick $m\in(2^n, 2^{n+1})$. Then
$$
a(m)\leq a(2^{n+1})\leq Ca(2^n)\leq MCb(2^n)\leq MCb(m).\\
$$

Now, to prove (a) we proceed by induction on $k\geq 1$. If $k=1$ then $Z$ is (isomorphic to) a Banach space. If $Z$ carries a norm, then   $c(n;Z)=1$ for all $n$; if it had an equivalent quasinorm, the sequence
 $c(n;Z)$ would be bounded. The induction step is as follows: assume $k>1$ and that $Z$ is a quasi-Banach space in ${\bf B}^{(k)}$. Take a short exact sequence $Y \to Z \to X$ with $X$ in ${\bf B}^{(k-1)}$ and $Y$ Banach. By Lemma~\ref{lem:Z=YFX} we may safely assume that  $Z=Y\oplus_F X$, where $F:X\to Y$ is quasilinear with $d(2;F)\leq 1$. By the induction hypothesis we have $c(m;X)\lesssim\log^{k-2}(m)$. In particular,
$$
c(2^n;X)\leq M n^{k-2}.
$$
for some $M$ and all $n$. Applying the second part of Lemma~\ref{lem:c and d} and taking into account that $c(3;Y)=1$, we obtain
%Aplicando el {\bf Lema B} tenemos
\begin{equation*}
    d(2^{n+1}; F) \leq c(2^n;X)+ d(2^{n}; F) \leq M n^{k-2} + d(2^{n}; F) ,
\end{equation*}
from which it follows that %de donde es evidente que
\begin{equation*}
    d(2^{n}; F) \leq  M[(n-1)^{k-2} + (n-2)^{k-2}+ \dots + 1^{k-2}] +1 \leq M' n^{k-1},
\end{equation*}
by Faulhaber's formula; see \cite[pp. 108--109, ``Power sums'']{aigner}. Hence $d(m;F)\leq M''\log^{k-1}(m)$ for some $M''$ and all $m$. Finally, to estimate
 $c(n;Y\oplus_F X)$ we may use the first part of Lemma~\ref{lem:c and d}, taking into account that $c(n;Y)=1$:
$$
c(n;Y\oplus_F X)\leq d(n;F)+ c(n;X)\leq  M''\log^{k-1}(n) +  M\log^{k-2}(n).
$$

To prove (b) we proceed by induction on $k$. The case $k=0$ is trivial. Assume it is known that the objects in $(\mathscr{S\hspace{-3pt}QE})^{k-1} \mathbf{B}$ have their constants dominated by $\log^{A}n$, where $A = 2^{k-1}-1 + \varepsilon$. Let $X, Y$ be two such spaces and assume that $Z$ fits in a short exact sequence $Y \to Z \to X$. As before, we may assume that $Z = Y \oplus_F X$, with $F: X \to Y$ quasilinear and $d(2; F) \le 1$. Since $\varepsilon>0$ is arbitrary, a quick glance at Lemma~\ref{lem:c and d}(a) shows that it suffices to see that
$$
d(n;F)\lesssim \log^{2^k-1+3\varepsilon}(n)
$$
for $n=2^j$, that is,
$$
d(2^j;F)\lesssim j^{2^k-1+3\varepsilon}.
$$
With an eye in Lemma~\ref{lem:c and d}(c) note that
$$
c(2^l;Y)\lesssim l^A\,;\quad c(j;Y)\lesssim \log^A j\,; \quad c(2^{j-l-1};X)\lesssim (j-l)^A.
$$
Now,
$$
d(2^j; F) \le c(j; Y) \sum_{0\leq l <j} c(2^l; Y) c(2^{j-l-1}; X)\lesssim \log^A j  \sum_{0\leq l <j} l^A (j-l)^A.
$$
But
$$\sum_{l=0}^{j-1} l^A (j-l)^A \le j^A \sum_{l=0}^{j-1} l^A \lesssim j^{2A+1}$$ again by Faulhaber's formula. Therefore:
% END CODEX CHANGE C40
$$ d(2^j; F) \lesssim (\log j)^A \; j^{2A+1} \quad\implies\quad c(n;Y\oplus_F X) \lesssim (\log \log n)^A (\log n)^{2A+1}.$$
Since $A = 2^{k-1}-1+\varepsilon$, we have $2A+1 = 2^k - 1 + 2\varepsilon$. On the other hand, $(\log \log n)^A\lesssim \log^\varepsilon n$ and thus
$$
 c(n;Y\oplus_F X) \lesssim (\log n)^{2A+1+\varepsilon}=
 (\log n)^{2^k-1+3\varepsilon}.
$$
This concludes the induction and yields the required upper estimates for $Y\oplus_F X$. \end{proof}

If one wants an epsilon-free bound in Part (b), observe that
if $c(n;X)\lesssim (\log n)^a$ and
$c(n;Y)\lesssim (\log n)^b$, Lemma~\ref{lem:c and d} evaluated at
$n=2^j$ yields
$$
c(n;Y\oplus_FX)\lesssim
(\log n)^{a+b+1}\bigl[\log(\log n)\bigr]^b.
$$

\section{Some special Orlicz spaces --- Proof of $(2) \Longrightarrow (3)$} % and the implication $(2) \Longrightarrow (3)$}\label{sec:meets}
%This section is devoted to proving the implication $(2) \Longrightarrow (3)$ of the main result.
We shall be using from now on the ``Orlicz'' functions
\begin{equation}\label{eq:phi_k}
\varphi_k(t)=t\log^k(1/t)
\end{equation}
to estimate sizes.
Although we are primarily concerned with the case where
$k\geq 1$
is an integer, most of this section makes sense for real $k\geq 0$.

However, even if we are mostly interested in the asymptotic behaviour for small values of $t$, these functions present some difficulties: they are increasing only up to $e^{-k}$, where they take the value $(k/e)^k$; they vanish at $t=1$ and may even be negative thereafter. Note that, at least, they all have $e^{-1}$ as a fixed point; see Figure ({\sc a}). So, it turns out that our Orlicz functions are not Orlicz after all.
Thus, proceeding as Pierre Menard
did with his rook’s pawn, we define once and for all, for each  real number $k>0$, the functions
\begin{equation*}
f_k(t) = t \log^k\left(e^k+\frac{1}{t}\right)\, ;
\end{equation*}
see Figure ({\sc b}).
These are continuous, increasing, concave and subadditive functions vanishing only at
$0$ --- for $k\geq 1$ they are even submultiplicative. In particular one has the $\Delta_2$-condition satisfied for $M=2$, namely: $f_k(2t)\leq 2f_k(t)$ for all $t\geq 0$. Hence each $f_k$ is an Orlicz function in the usual non-locally convex sense; see \cite[Section 2]{kalton77}, \cite[Sections 0.2--0.3]{turpin} and \cite[Section 1.3]{rolewicz} for further explanations. Clearly, for each fixed $k\geq 0$ one has $\varphi_k(t)\sim f_k(t)$ for $0\leq t\leq e^{-1}$,
which allows us to keep using the original versions
in most of the calculations.
%\vspace{-10pt}
\begin{figure}[htbp]
    \centering
    % Primera subfigura
    \begin{subfigure}[b]{0.475\textwidth}
        \centering
        \includegraphics[width=\textwidth]{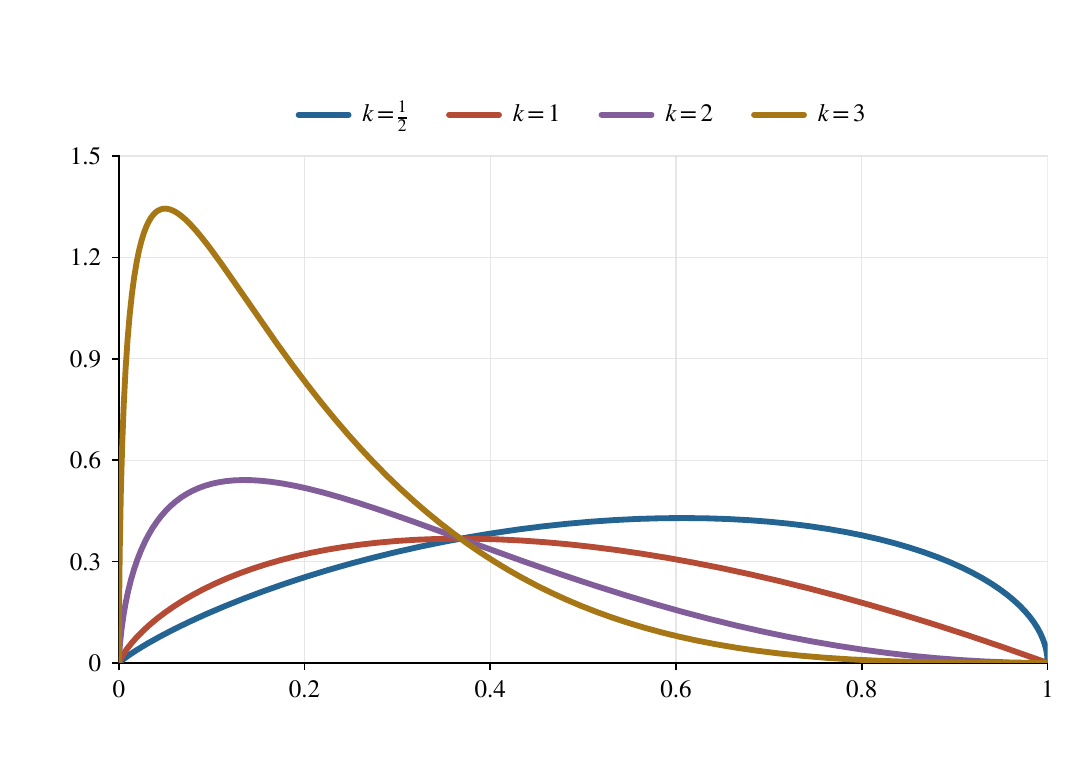} % Sin extensión o con .png/.pdf/.jpg
        \caption{The original functions $\varphi_k$}
        \label{fig:izq}
    \end{subfigure}
    \hfill % Este comando añade un espacio flexible entre las dos imágenes
    % Segunda subfigura
    \begin{subfigure}[b]{0.475\textwidth}
        \centering
        \includegraphics[width=\textwidth]{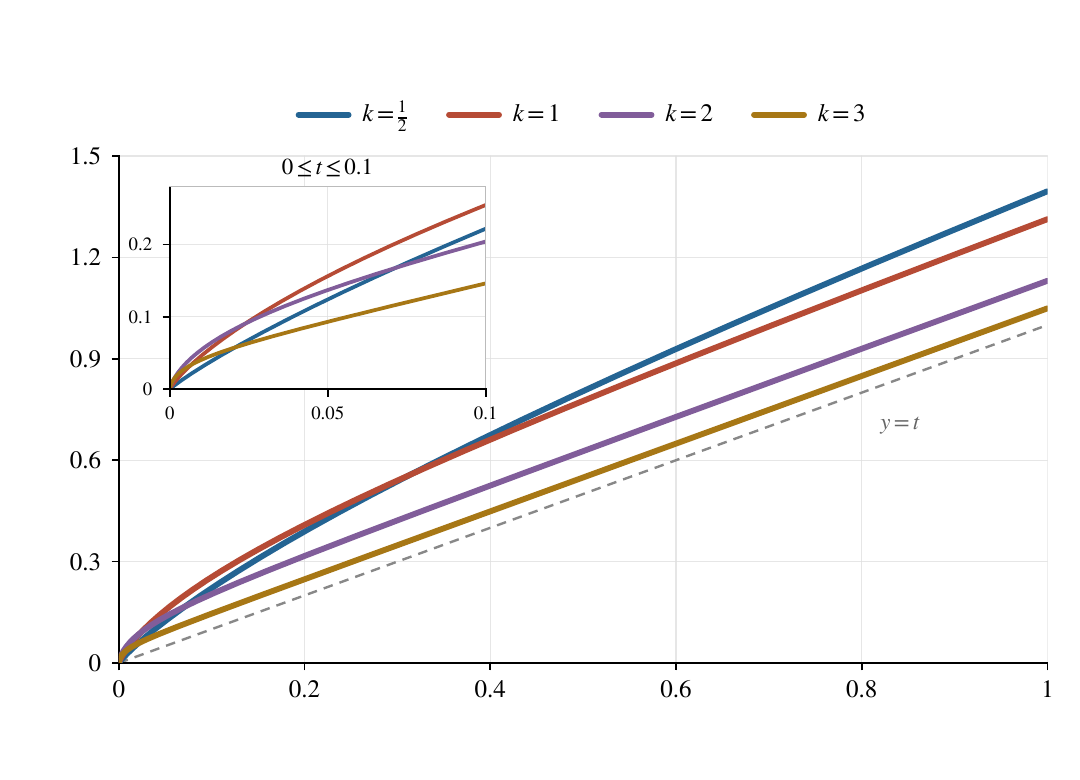}
        \caption{The graphs of $f_k$, scaled by $k^{-k}$.}
        \label{fig:der}
    \end{subfigure}
    %\caption{Título general que abarca las dos figuras conjuntas.}
    \label{fig:conjunta}
\end{figure}

The Orlicz sequence space  $\ell(f_k)$ associated to $f_k$ is the vector space of all real or complex sequences $x=(x_n)$ such that
$$
|x|_{f_k} =  \sum_{n\geq 1} f_k(|x_n|)<\infty.
$$
The function $(x,y)\mapsto|y-x|_{f_k}$ defines an invariant metric
on $\ell(f_k)$ which makes it a complete metric linear space. On the other hand, it is easy to find a constant $c>0$ (taking $c=e^{-k}$ is an overkill) such that
$$
\sup_{t>0}\frac{f_k(ct)}{f_k(t)}<1.
$$
Criteria from the year dot (\cite[Proposition 4.2]{kalton77} or \cite[Theorem 3.3.3]{rolewicz}) imply that $\ell (f_k)$ is a quasi-Banach space and that its  topology can be recovered from the  Luxemburg quasinorm
\begin{equation}\label{eq:luxlfk}
\|x\|_{f_k} =  \inf\left\{t>0 : \sum_{n\geq 1}f_k\left(\frac{|x_n|}{t}\right)\leq 1\right\}.
\end{equation}
With a slight abuse of notation, we may also consider the quantity (see the proof of Proposition~\ref{prop:lfk is serre})
\begin{equation*}
|x|_{\varphi_k} =  \sum_{n\geq 1} \varphi_k(|x_n|) = \|\varphi_k\circ |x|\|_1.
\end{equation*}
Note that, for each fixed $k$, we have $|x|_{\varphi_k}\sim |x|_{f_k}$ provided $|x_n|\leq e^{-1}$ for all $n$.

\medskip

We pave our way to furnish these spaces with more explicit quasinorms with the following Lemma. Fix $k>0$.
For each decreasing sequence  $x=(x_n)$ in $[0,1]$ define
\begin{align*}
 S_k(x)&=
    \sum_n x_n(1+\log^k(1/x_n)),\\
     L_k(x)&=
    \sum_n x_n(1+\log^k n).
\end{align*}
Observe that if either $L_k(x)\leq 1$ or $S_k(x)\leq 1$ then $\|x\|_1\leq 1$.
\begin{lemma}$\;$
\begin{itemize}
    \item [{\rm (a)}] If $\,\|x\|_1\leq 1$ then $L_k(x)\leq S_k(x)$.
    \item [{\rm (b)}] There is a constant $M=M(k)$ such that $S_k(x)\leq M$ whenever $L_k(x)\leq 1$.
\end{itemize}
\end{lemma}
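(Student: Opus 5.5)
Both parts rest on one elementary fact: if $x$ is decreasing, then $n x_n\le x_1+\dots+x_n\le \|x\|_1$. Part (a) is a termwise comparison and needs nothing else. Part (b) is a splitting argument, and there the main obstacle is to sum a tail in which $1/x_n$ is much larger than $n$.

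For (a), assume $\|x\|_1\le 1$. The fact above gives $n x_n\le 1$ for every $n$, that is, $1/x_n\ge n$. Hence $\log^k(1/x_n)\ge \log^k n$ termwise, because $\log^k$ is increasing on $[1,\infty)$ and both arguments are at least $1$. Multiplying by $x_n$ and summing gives $L_k(x)\le S_k(x)$.

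For (b), assume $L_k(x)\le 1$, so that $\|x\|_1\le 1$ and again $x_n\le 1/n$. I will split the indices into two sets.
\begin{itemize}
\item[(i)] $A=\{n: x_n\ge n^{-2}\}$. Here $\log(1/x_n)\le 2\log n$, so these terms contribute at most $2^k\sum_n x_n(1+\log^k n)\le 2^k$.
\item[(ii)] $B=\{n: x_n<n^{-2}\}$. Here I use that $t\mapsto t\log^k(1/t)$ is increasing on $(0,e^{-k}]$. For $n$ larger than a threshold $n_0(k)$ with $n^{-2}\le e^{-k}$, this gives $x_n\log^k(1/x_n)\le n^{-2}\log^k(n^2)=2^k n^{-2}\log^k n$, which is summable with a sum depending only on $k$. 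For the finitely many $n\le n_0$, the bound $\sup_{0<t\le1} t\log^k(1/t)=(k/e)^k$ handles each term.
\end{itemize}
The "$1+$" parts of $S_k$ contribute $\|x\|_1\le1$. Adding up, $S_k(x)\le 1+2^k+C(k)=:M(k)$.

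The delicate step is the tail on $B$. There one cannot compare with $\log n$ directly, and the argument only works because the monotonicity of $\varphi_k$ near $0$ lets us replace $x_n$ by the larger value $n^{-2}$.
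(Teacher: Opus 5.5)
Your proof is correct and is essentially the paper's own argument. Part (a) is the same termwise comparison coming from $x_n\le 1/n$. Part (b) uses the identical split into $\{n: x_n\ge n^{-2}\}$ and $\{n: x_n<n^{-2}\}$: the first piece is bounded by $2^kL_k(x)$, and the second by the monotonicity of $t\log^k(1/t)$ on $(0,e^{-k}]$ for $n\ge e^{k/2}$, together with the bound $(k/e)^k$ on the finitely many remaining terms.
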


\begin{proof} (a) We have $\sum_n x_n\leq 1$, so $x_n\leq 1/n$ hence $n\leq 1/x_n$ and $L_k(x)\leq S_k(x)$. To prove (b) assume $L_k(x)\leq 1$. Since $\sum_nx_n\leq L_k(x)$ it suffices to find an upper bound for
$$
\sum_n x_n\log^k(1/x_n)
$$
Split $\mathbb N =I\cup J$ with $I=\{n: x_n\geq n^{-2}\}$ and $J=\{n: x_n< n^{-2}\}$. If $n\in I$ then $n^2\geq 1/x_n$ and $\log (n^2)\geq \log(1/x_n) $, hence
$$
\sum_{n\in I} x_n\log^k(1/x_n)\leq \sum_{I} x_n\log^k(n^2)\leq 2^k L_k(x).
$$
To finish, consider the ``original'' function $\varphi_k(t)=t\log^k(1/t)$ for $t>0$. We already know
from the beginning of this section that:
\begin{itemize}
    \item $\varphi(t)\to 0$ as $t\to 0^+$,
    \item $\varphi$ is increasing for $0<t<e^{-k}$, and $\varphi(e^{-k})=k^ke^{-k}$.
\end{itemize}
If $n\in J$, then $\varphi(x_n)<\varphi(n^{-2})$, provided $n^{-2}$ is in the interval where $\varphi$ increases, that is, when $n\geq e^{k/2}$. Therefore,
$$
\sum_{n\in J} x_n\log^k\frac{1}{x_n}= \sum_{\substack{n\leq e^{k/2}\\ n \in J}} x_n\log^k\frac{1}{x_n}+ \sum_{\substack{n > e^{k/2}\\ n \in J}} x_n\log^k\frac{1}{x_n}
\leq e^{k/2} \frac{k^k}{e^k}+ \sum_{n\geq 1}\frac{1}{n^2}\log^k (n^2),
$$
%\begin{eqnarray*}
%\sum_{n\in J} x_n\log^k(1/x_n)&=& \sum_{\substack{n\leq e^{k/2}\\ n \in J}} x_n\log^k(1/x_n)+ \sum_{\substack{n > e^{k/2}\\ n \in J}} x_n\log^k(1/x_n)\\
%&\leq& e^{k/2} \frac{k^k}{e^k}+ \sum_{n\geq 1}\frac{1}{n^2}\log^k (n^2),
%\end{eqnarray*}
and the series converges for every $k$. Hence, it is enough setting
$$
M= 1+2^k+\left(\frac{k}{\sqrt{e}}\right)^k+ 2^k  \sum_{n\geq 1}\frac{\log^k n}{n^2}.\qedhere
$$
\end{proof}

From now on $x^*$ denotes the decreasing rearrangement of $x$ (of $|x|$, actually), which is well-defined if $x_n\to 0$.

\begin{corollary}\label{cor:lorentz}
Fix $k>0$. The following quasinorms are equivalent to the Luxemburg quasinorm on $\ell(f_k)$:
\begin{itemize}
    \item $\|x\|=\sum_n|x_n|(1+\log^k(\|x\|_1/|x_n|))$.
    \item $\|x\|_*=\sum_nx_n^*(1+\log^k n)$.
\end{itemize}
\end{corollary}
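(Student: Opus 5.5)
The plan is to reduce everything to the preceding Lemma by homogeneity and rearrangement invariance. All three quantities (the Luxemburg quasinorm $\|\cdot\|_{f_k}$, $\|\cdot\|$ and $\|\cdot\|_*$) are positively homogeneous and invariant under permutations of coordinates and changes of sign/phase. It therefore suffices to work with a decreasing sequence $x=x^*$ in $[0,\infty)$, normalized so that $\|x\|_1=1$. Under this normalization $\|x\|=S_k(x)$ and $\|x\|_*=L_k(x)$.

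\textbf{Equivalence of $\|\cdot\|$ and $\|\cdot\|_*$.} With $\|x\|_1=1$, part (a) of the Lemma gives $\|x\|_*=L_k(x)\leq S_k(x)=\|x\|$. For the reverse inequality, put $\lambda=L_k(x)\geq \|x\|_1=1$ and $y=x/\lambda$, so that $L_k(y)=1$ and $\|y\|_1=1/\lambda$. Part (b) gives $S_k(y)\leq M$. Now expand
$$\|x\|=\lambda\sum_n y_n\bigl(1+\log^k(\|y\|_1/y_n)\bigr).$$
Here $\|y\|_1/y_n\leq 1/y_n$, and $\log(\|y\|_1/y_n)\geq 0$ because $y_n\leq\|y\|_1$. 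Hence each summand is at most $y_n(1+\log^k(1/y_n))$, and so $\|x\|\leq \lambda S_k(y)\leq M\|x\|_*$.

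\textbf{Equivalence with the Luxemburg quasinorm.} Use the modular criterion: it suffices to show that $\sum_n f_k(x_n)\leq 1$ implies $\|x\|\lesssim 1$, and that $\|x\|\leq 1$ implies $\sum_n f_k(x_n)\lesssim 1$. One then transfers this to norm equivalence using homogeneity together with the $\Delta_2$ condition $f_k(2t)\leq 2f_k(t)$, and the estimate $f_k(ct)\leq \theta f_k(t)$ for some $\theta<1$ and small $c$.

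For the second implication, suppose $\|x\|_*\leq 1$. Then $x_n\leq 1$ and $\|x\|_1\leq 1$. Since $f_k(t)\sim t(1+\log^k(1/t))$ uniformly for $t\in(0,1]$, we get $\sum_n f_k(x_n)\lesssim \sum_n x_n(1+\log^k(1/x_n))$, which is $S_k$ of the decreasing rearrangement. By part (b) of the Lemma this is bounded by $M$.

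For the first implication, suppose $\sum_n f_k(x_n)\leq 1$. Then $x_n\leq 1$, because $f_k(t)\geq t$ for $t\leq 1$. Moreover $\|x\|_1\leq\sum_n f_k(x_n)\leq 1$, since $f_k(t)\geq t$. Part (a) of the Lemma then gives $\|x\|_*\leq S_k(x^*)\lesssim \sum_n f_k(x_n)\leq 1$.

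\textbf{Main obstacle.} The delicate step is the bookkeeping that turns these modular bounds into genuine quasinorm equivalences with constants independent of $x$. Concretely: if $\|x\|_{f_k}=1$ then $\sum_n f_k(x_n)\leq 1$, which gives $\|x\|_*\lesssim 1$. Conversely, if $\|x\|_*=1$ then $\sum_n f_k(x_n)\leq M'$. Applying the contraction $f_k(ct)\leq\theta f_k(t)$ repeatedly, some fixed dilation $c^m x$ has modular at most $1$, so $\|x\|_{f_k}\leq c^{-m}$. Homogeneity then finishes the argument.
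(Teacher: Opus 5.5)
Your argument is correct in substance. It follows the paper's outline: the Lemma gives $\|\cdot\|\sim\|\cdot\|_*$, and then the modular is compared with $S_k$ in both directions. The key step $\|\cdot\|_{f_k}\lesssim\|\cdot\|_*$, however, is handled differently, and your version is the one that works.

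The paper aims at a linear bound $\sum_n f_k(x_n)\le(2(2+k))^k\|x\|_*$ for decreasing $x$ with $\|x\|_1\le1$. It starts from $\log^k(e^k+1/x_n)\le\log^k(e^k+n)$. Since $x_n\le1/n$ means $1/x_n\ge n$, that inequality points the wrong way. The linear bound itself also fails: take $x=te_1$ with $t\to0$. You instead combine part (b) of the Lemma with $f_k(t)\sim t(1+\log^k(1/t))$ on $(0,1]$ to get a bounded modular when $\|x\|_*\le1$. You then convert the bounded modular into a bounded Luxemburg quasinorm via $f_k(ct)\le\theta f_k(t)$. This is what part (b) is there for, and it supplies the modular-to-quasinorm bookkeeping the printed proof omits.

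For the other direction, the paper bounds $\|x\|$ directly from the modular. You pass through $\|x\|_*\le S_k(x^*)$ and your explicit estimate $\|x\|\le M\|x\|_*$, obtained by rescaling $y=x/L_k(x)$, a step the paper only asserts. Both routes are fine.

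One small slip: $f_k(t)\ge t$ on $(0,1]$ holds only when $k\ge\log(e-1)\approx0.54$; for smaller $k$ even $f_k(1)<1$. So for such $k$ a modular $\le1$ does not force $x_n\le1$ or $\|x\|_1\le1$. Use $f_k(t)\ge k^k t\ge e^{-1/e}t$ instead, which gives $\|x\|_1\le e^{1/e}$, and rescale $x$ by this constant before applying part (a). Only the constants change, and for integer $k\ge1$ (the case used later in the paper) your claim holds as stated.
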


\begin{proof}
Both expressions are homogeneous and dominate the $\ell_1$-norm. The preceding lemma shows that they are equivalent ---  although we do not yet know if they are quasinorms.

It remains to compare these expressions with the Luxemburg quasinorm.
First note that for $t>0$ one has $\log^k(e^k+1/t)\geq \tfrac{1}{2}\big( k^k+\log^k(1/t)\big)$ and that $\min \{k^k: k>0\} =1/e$.

Now, if $x\geq 0$ has $\|x\|_{f_k}\leq 1$, then
$\sum_i f_k(|x_i|)\leq1$, hence
\begin{align*}
1\geq \sum_n x_n \log^k\left(e^k+\frac{1}{x_n}\right)\ge
\frac{1}{2e} \sum_n x_n +  \frac{1}{2}\underbrace{\sum_n x_n  \log^k\left(\frac{1}{x_n}\right)}_{(\star)}
\end{align*}
In particular $\|x\|_1\leq 2e$, so that
$$
(\star)\geq \sum_n x_n  \log^k\left(\frac{\|x\|_1}{2ex_n}\right),
$$
which shows that $\|\cdot\|\lesssim \|\cdot\|_{f_k}$. We now prove that $\|\cdot\|_{f_k}\lesssim \|\cdot\|_*$
Pick any decreasing $x\geq 0$ with $\|x\|_1\leq 1$. Then $x_n\leq 1/n$, that is, $1/x_n\ge n$.
Taking into account the elementary inequality
$(a+b)^k\leq2^{k-1}(a^k+b^k)$,
one has
\begin{equation*}
\begin{gathered}
\sum_n f_k(|x_n|)=
\sum_n x_n \log^k\left(e^k+\frac{1}{x_n}\right)\leq \sum_n x_n \log^k\left(e^k+n\right)\\
\leq \sum_n x_n \log^k\left(2e^k n\right)
\leq 2^{k-1}\sum_n x_n\left((2+k)^k + \log^k n\right)
\leq (2(2+k))^k\|x\|_*\, .\qedhere
\end{gathered}
\end{equation*}
\end{proof}

The corollary means that ``our'' Orlicz space $\ell(f_k)$
agrees with the ``Lorentz sequence space'' $d\big(1+\log^k; 1\big)$, which is simply delicious; see \cite[Section 4.e]{Lind-Tzaf} for the definition of a Lorentz space and \cite[Theorem 3.6]{kalton81} for the case $k=1$, where the Riemann $\zeta$-function plays a role! Note that, in these clothes, the unit vectors $e_i$ (the sequence with 1 in the $i$-th entry and 0 elsewhere) have size $1$, \emph{comme il faut}. From now on we shall use the quasinorm that appears in the first part of the corollary on $\ell({f_k})$ without further mention. Where strictly necessary, we shall include $\ell({f_k})$
in the subscript. This notation does not conflict with \eqref{eq:luxlfk}.

\begin{lemma} For each fixed $k$ one has  $\log^k n \lesssim c(n;\ell({f_k}))$.
\end{lemma}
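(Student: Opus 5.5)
We need to exhibit $n$ vectors whose sum has quasinorm about $\log^k n$ times the sum of their quasinorms. We work with the equivalent quasinorm $\|x\|_*=\sum_n x_n^*(1+\log^k n)$ from Corollary~\ref{cor:lorentz}. Since all the quasinorms there are equivalent, estimates of $c(n;\cdot)$ change only by constants depending on $k$.

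The natural test vectors are the unit vectors $e_1,\dots,e_n$. Each has $\|e_i\|_*=1$ (the decreasing rearrangement is $e_1$), so $\sum_{i\le n}\|e_i\|_*=n$. On the other hand the sum $e_1+\dots+e_n$ has decreasing rearrangement $(1,\dots,1,0,\dots)$, so
$$
\Big\|\sum_{i\le n}e_i\Big\|_*=\sum_{m\le n}(1+\log^k m)\geq \sum_{n/2< m\le n}\log^k m\geq \frac n2\log^k(n/2).
$$
Hence $c(n;\ell(f_k))\gtrsim \tfrac12\log^k(n/2)$. For $n\geq 4$ we have $\log(n/2)\geq \tfrac12\log n$, which gives $\log^k n\lesssim c(n;\ell(f_k))$ for $n\ge4$. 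The finitely many remaining $n$ (namely $n=2,3$) are handled by $c(n)\ge 1$ and boundedness of $\log^k n$ there.

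A cleaner alternative, if one prefers not to rely on $\|\cdot\|_*$ being comparable to a genuine quasinorm, is to use the first quasinorm of Corollary~\ref{cor:lorentz}. For $x=\sum_{i\le n}e_i$ we have $\|x\|_1=n$, so
$$
\|x\|=\sum_{i\le n}(1+\log^k n)=n(1+\log^k n),
$$
while $\|e_i\|=1$. This gives the ratio $1+\log^k n$ directly.

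The only delicate point is that $c(n;X)$ is defined via a fixed quasinorm. Passing to an equivalent quasinorm changes $c(n)$ by at most the product of the equivalence constants, uniformly in $n$. This is exactly why the choice of quasinorm does not affect the claimed $\lesssim$, and it is the step I would state explicitly. There is no real obstacle beyond that.
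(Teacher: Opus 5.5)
Your proof is correct and is essentially the paper's: the paper tests on $s_n=e_1+\dots+e_n$ with the first quasinorm of Corollary~\ref{cor:lorentz}, obtaining $\|e_i\|=1$ and $\|s_n\|=n(1+\log^k n)$, which is exactly your ``cleaner alternative''. Your main route via $\|\cdot\|_*$ uses the same test vectors and loses only a constant. As you note, two-sided homogeneous equivalence of quasinorms changes $c(n;\cdot)$ by a factor independent of $n$.
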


\begin{proof} Let $(e_i)$ be the unit basis of $\ell(f_k)$ and put $s_n=e_1+\dots + e_n$. We have $\|e_i\|=1$ for all $i$, while
$$
\|s_n\|=\sum_{i=1}^n (1+\log^k(n)) = n (1+\log^k(n)).\qedhere
$$
\end{proof}
We shall see later that, moreover, $c(n;\ell({f_k})) \lesssim \log^k n$.\medskip

The following result grants us the greatest possible flexibility when choosing an equivalent $p$-norm --- and suggests that we are heading in the right direction, too. We present a proof that introduces certain ideas to be used later. Given $0<p<1$, a quasi-Banach space $X$ is said to be $p$-convex if there is a constant $C$ such that $\left\|\sum_i x_i\right\|\leq C  \left(\sum_i \|x_i\|^p\right)^{1/p}$ for every finite subset of $X$. It is clear that a space is $p$-convex if and only if its quasinorm is equivalent to a $p$-norm. %, and more often than not "$p$-convex" and "$p$-Banach" are used as synonyms (cf. \cite[p.7]{kpr}). We will distinguish then when the constant $C$ above has to be considered.

\begin{lemma}\label{lem:log->p} If $X$ is a quasi-Banach space with $c(n;X)\lesssim \log^k n$ for some $k\geq 0$ then $X$ is $p$-convex for every $0<p<1$.
\end{lemma}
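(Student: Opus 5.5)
We have to show: if $c(n;X)\le M\log^k n$ for all $n\ge 2$, then for each $0<p<1$ there is $C$ with $\|\sum_i x_i\|\le C(\sum_i\|x_i\|^p)^{1/p}$ for every finite family in $X$. The plan is a dyadic blocking argument, which trades the polylogarithmic loss in $c(n;X)$ against the geometric decay produced by $p<1$.

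First we normalize and split into levels. By homogeneity assume $\sum_i\|x_i\|^p=1$, so every $\|x_i\|\le 1$. For $j\ge 0$ let $A_j=\{i:2^{-j-1}<\|x_i\|\le 2^{-j}\}$ and $N_j=|A_j|$. Since $\sum_i\|x_i\|^p=1$, we get $N_j2^{-(j+1)p}\le 1$, that is, $N_j\le 2^{(j+1)p}$.

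Next we estimate each block with the hypothesis. Put $y_j=\sum_{i\in A_j}x_i$. Then
$$
\|y_j\|\le c(N_j;X)\,N_j2^{-j}\le M\log^k\!\big(2^{(j+1)p}\big)\,2^{(j+1)p}2^{-j}\lesssim (j+1)^k2^{-j(1-p)}.
$$
(For $N_j\le 1$ no constant is needed.) This is where $p<1$ is used: the block norms decay geometrically, with only a polynomial factor in $j$.

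It remains to add up the blocks. The difficulty is that there are infinitely many levels and the quasinorm is not subadditive, so the bounds on $\|y_j\|$ cannot simply be summed. By Aoki--Rolewicz, fix an equivalent $r$-norm on $X$ for some $r\in(0,1]$; the hypothesis and the conclusion are both invariant under passing to an equivalent quasinorm, up to constants. Then
$$
\Big\|\sum_j y_j\Big\|^r\lesssim\sum_j\|y_j\|^r\lesssim\sum_j (j+1)^{kr}2^{-jr(1-p)}<\infty,
$$
and this bound is independent of the family. Hence $\|\sum_i x_i\|\le C$ with $C=C(p,k,M,r)$, which gives $p$-convexity.

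This last step is the one needing care. The Aoki--Rolewicz $r$ may be tiny, but this does no harm, because the geometric factor $2^{-jr(1-p)}$ still beats any polynomial in $j$.
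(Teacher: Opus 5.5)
Your proof is correct and follows essentially the paper's route: a dyadic decomposition, each block bounded via $c(n;X)$ to get geometric decay $2^{-j(1-p)}$ up to a polynomial factor, and the blocks summed in an equivalent $r$-norm from Aoki--Rolewicz. The only difference is cosmetic: you block by size levels $2^{-j-1}<\|x_i\|\le 2^{-j}$ and count with $N_j\le 2^{(j+1)p}$, whereas the paper rearranges the $x_i$ decreasingly, blocks by indices $2^n\le i<2^{n+1}$, and uses $\|x_i\|\le i^{-1/p}$.
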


\begin{proof} Fix $p\in(0,1)$. By the Aoki--Rolewicz theorem we may assume that the quasinorm of $X$ is an $r$-norm for some $0<r<1$. Assume that $\sum_i \|x_i\|^p\leq 1$ and $\|x_1\|\geq \|x_2\|\geq \dots$ Then $\|x_i\|\leq i^{-1/p}$. Adding some zeros if necessary we may assume that we are dealing with $2^N-1$ points:
\begin{equation}\label{eq:2N-1}
x_1, x_{2^1}, x_3, x_{2^2}, x_5,\dots, x_{2^3}, \dots \overbrace{x_{2^n}, \dots, x_{2^{n+1}-1}}^{\text{$n$-th group}}, x_{2^{n+1}}, \dots x_{2^{N-1}}\dots x_{2^N-1}.
\end{equation}
Take $M$ such that $c(n;X)\leq M\log^k_2 n$ for all $n\geq 2$  and let us first bound the quasinorm of the sum of the terms in the $n$-th group ($n>0$):
$$
\left\|\sum_{2^{n}\leq i<2^{n+1}} x_i\right\|\leq c(2^{n}; X)\sum_{2^{n}\leq i<2^{n+1}} \|x_i\|
\leq Mn^k \sum_{2^{n}\leq i<2^{n+1}} \frac{1}{i^{1/p}}
\leq \frac{Mn^k 2^{n}}{(2^{n})^{1/p}}= \frac{Mn^k}{ (2^{1/p-1})^{n}},
$$
which is great because $p<1$ implies $2^{1/p-1}>1$ and then  the series $\sum_{n=1}^\infty n^{kr}(2^{1-1/p})^{rn}$ is summable for every $r>0$.
Consequently,
\begin{equation*}
\Big\|\sum_{1\leq i<2^N}\hspace{-0,3cm} x_i\Big\|^r =
\Big\|\sum_{0\leq n<N}\;\sum_{2^{n}\leq i<2^{n+1}} \hspace{-0,3cm} x_i\Big\|^r
\leq \sum_{0\leq  n<N}\Big\|\sum_{2^{n}\leq i<2^{n+1}} \hspace{-0,3cm} x_i\Big\|^r
 \leq M^r  \left(1+ \sum_{n\geq 1}  \frac{n^{kr}}{(2^{1/p-1})^{rn}}\right).\qedhere\end{equation*}
\end{proof}

Recall that one of our ultimate aims is to show that every space  whose sum constants are bounded by a power of the logarithm is a quotient of some $\ell(f_l)$. Our point of entry is the following summability argument, which is an imperfect generalization of Kalton's result
\cite[Theorem 7.2]{kalton81} quoted at the Introduction.

\begin{lemma}\label{lem:galb} Let $X$ be a quasi-Banach space such that
$c(n;X)\lesssim \log^k n$. If $l>k$, there is a constant
$C=C(X,k,l)$ such that, for every finite family $x_1,\dots,x_N$ in $X$ with $\|x_1\|\geq \|x_2\|\geq \dots$ one has
\begin{equation}\label{eq:||<M}
\left\|\sum_{i=1}^N x_i\right\|
\leq C\sum_{i=1}^N \|x_i\|\bigl(1+ \log^l i \bigr). %,
%\qquad a_i=\|x_i\|.
\end{equation}
%\begin{equation}\label{eq:||<M}
%\left\|\sum_{i=1}^N x_i\right\|
%\leq C\sum_{i=1}^N a_i^*\bigl[1+(\log i)^l\bigr],
%\qquad a_i=\|x_i\|.
%\end{equation}
Consequently, if $(\|x_i\|)$ belongs to $\ell(f_l)$, then
$\sum_i x_i$ converges in $X$ and obeys the corresponding estimate.
\end{lemma}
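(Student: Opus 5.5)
We prove the estimate by a dyadic blocking argument, modelled on the proof of Lemma~\ref{lem:log->p}. By the Aoki--Rolewicz theorem we may assume that the quasinorm of $X$ is an $r$-norm for some $0<r\le 1$. Fix $M$ with $c(n;X)\le M\log_2^k n$ for $n\ge 2$. Given $x_1,\dots,x_N$ with decreasing norms $a_i=\|x_i\|$, pad with zeros and split the indices into the dyadic groups $G_n=\{i: 2^n\le i<2^{n+1}\}$, as in \eqref{eq:2N-1}. Put $y_n=\sum_{i\in G_n}x_i$ and $\sigma_n=\sum_{i\in G_n}a_i$. Since $G_n$ has $2^n$ elements, the hypothesis gives
$$
\|y_n\|\le c(2^n;X)\,\sigma_n\le M n^k\sigma_n \qquad (n\ge1),
$$
and $\|y_0\|=a_1$.

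The difficulty is to recombine the blocks $y_n$. The $r$-norm only gives $\|\sum_n y_n\|^r\le\sum_n\|y_n\|^r$, which for $r<1$ is not controlled by $\sum_n n^l\sigma_n$. Instead we use the sum constants once more, at a second dyadic level. Group the blocks $y_n$ into super-blocks $H_m=\{n: 2^m\le n<2^{m+1}\}$. The sum over $H_m$ involves $2^m$ blocks, so
$$
\Big\|\sum_{n\in H_m}y_n\Big\|\le c(2^m;X)\sum_{n\in H_m}\|y_n\|\le M^2 m^k\sum_{n\in H_m}n^k\sigma_n\lesssim \sum_{n\in H_m} n^k(\log n)^k\sigma_n .
$$

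It remains to sum the super-blocks, which we do with the $r$-norm. First use the monotonicity of the $a_i$ to compare $\sigma_n$ across blocks: $\sigma_n\le 2^n a_{2^n}$, and $a_{2^n}\le 2^{-(n-1)}\sigma_{n-1}$, so $\sigma_n\le 2\sigma_{n-1}$. This alone is not enough, because decreasing sequences such as $a_i=1/(i\log^2 i)$ have $\sigma_n$ decaying only polynomially in $n$.

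Here the gap $l>k$ is used. Write $\tau_m=\sum_{n\in H_m}n^l\sigma_n$, so that $\sum_m\tau_m\sim\sum_i a_i(1+\log^l i)$. Since $n^k(\log n)^k\lesssim n^l$, each super-block satisfies $\|\sum_{n\in H_m}y_n\|\lesssim\tau_m$, and
$$
\Big\|\sum_i x_i\Big\|\le\Big(\sum_m\Big\|\sum_{n\in H_m}y_n\Big\|^r\Big)^{1/r}.
$$
The obstacle is that $\sum_m\tau_m^r$ is not controlled by $(\sum_m\tau_m)^r$. To fix this, iterate the construction instead of stopping at two levels. Define recursively $N_0=1$, $N_{j+1}=2^{N_j}$, take the level-$j$ blocks to be the indices in $[N_j,N_{j+1})$, and apply $c(\cdot;X)$ to each block. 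The number of blocks needed is of order $\log^* N$, and each application of $c$ costs a factor $\lesssim(\log\text{of the block count})^k$. The crude bound of the previous display then becomes
$$
\Big\|\sum_i x_i\Big\|\lesssim\sum_j\prod(\text{losses at level } j)\,\sum_{i\in\text{block } j}a_i\,\log^k i\cdot(\log\log i)^k\cdots
$$
Every such iterated-logarithm loss is eventually dominated by $(\log i)^{l-k}$. The remaining point is that the number of levels and the accumulated constants stay uniform. This is secured by the subadditivity-type bound $\sigma_n\le2\sigma_{n-1}$ together with $\sum_n n^{-(l-k)}$-type summations, which replace the geometric factor $2^{1-1/p}$ of Lemma~\ref{lem:log->p}. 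This is the step we expect to be the main obstacle.

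Finally, the convergence statement follows by applying \eqref{eq:||<M} to the tails. By Corollary~\ref{cor:lorentz}, $(\|x_i\|)\in\ell(f_l)$ means $\sum_i a_i^*(1+\log^l i)<\infty$. The estimate applied to finite families of terms, rearranged in decreasing order of norm, then shows that the partial sums of $\sum_i x_i$ are Cauchy.
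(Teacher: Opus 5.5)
Your first dyadic step, $\|y_n\|\le Mn^k\sigma_n$, is the same as the paper's, but the proof is never completed. The final part (the tower $N_{j+1}=2^{N_j}$, the iterated-logarithm losses, the ``uniform'' constants) is not an argument, and you yourself leave it as the main obstacle. That obstacle is self-inflicted, for two reasons: you throw away the gain coming from $l>k$, and you accept whatever $r$ the Aoki--Rolewicz theorem gives you.

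The paper keeps the gain, $\|y_n\|\lesssim(1+n)^{k-l}u_n$ with $u_n=\sum_{i\in G_n}\|x_i\|(1+\log^l i)$. It then uses Lemma~\ref{lem:log->p} (the space is $r$-convex for \emph{every} $r<1$) to choose $\frac{1}{1+l-k}<r<1$. H\"older with exponents $1/r$ and $1/(1-r)$ gives
\[
\Bigl\|\sum_i x_i\Bigr\|^r\le\sum_n\|y_n\|^r\lesssim\sum_n(1+n)^{-(l-k)r}u_n^r\le\Bigl(\sum_n u_n\Bigr)^r\Bigl(\sum_n(1+n)^{-\frac{(l-k)r}{1-r}}\Bigr)^{1-r},
\]
and the last series converges precisely because $r(1+l-k)>1$. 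So your claim that the $r$-triangle inequality cannot recombine the blocks is false once $r$ is taken close to $1$.

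Your two-level scheme would also close at once, even without Lemma~\ref{lem:log->p}, had you not replaced $m^kn^k$ by $n^l$. For $n\in H_m$ one has $n^k\le 2^{-m(l-k)}n^l$. Hence, with $M\ge1$ and $S=\sum_m\tau_m$,
\[
\Bigl\|\sum_{n\in H_m}y_n\Bigr\|\le M^2(1+m)^k2^{-m(l-k)}\tau_m\le M^2(1+m)^k2^{-m(l-k)}S .
\]
Therefore $\bigl\|\sum_ix_i\bigr\|^r\le a_1^r+M^{2r}S^r\sum_m(1+m)^{kr}2^{-m(l-k)r}$, and this series converges for every $r>0$. The bound $\sigma_n\le 2\sigma_{n-1}$ and the iterated blocking play no role in either route. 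Your deduction of the convergence statement, by applying the estimate to tails, is fine and matches the paper.
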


\begin{proof}
Choose $r$ with
$$
\frac1{1+l-k}<r<1
$$
and use Lemma~\ref{lem:log->p} to replace the quasinorm of $X$ by an
equivalent $r$-norm. %Permuting a finite sum if necessary, assume
%$a_1\geq a_2\geq\cdots$.
Now we proceed as in the proof of Lemma~\ref{lem:log->p}, grouping the summands as in \eqref{eq:2N-1}:
for $j\geq 0$ set
\begin{itemize}
\item $I_j=\{i:2^j\leq i<2^{j+1}\}$;
\item $y_j=\sum_{i\in I_j}x_i$;
\item $u_j=\sum_{i\in I_j}\|x_i\|\bigl(1+ \log^l i \bigr).$
\end{itemize}
On $I_j$ one has $1+(\log i)^l < 1+ \log^l 2 (1+j)^l$. Hence
$$
\|y_j\|\leq c(2^j;X)\sum_{i\in I_j}\|x_i\|
\lesssim(1+j)^{k-l}u_j.
$$
The $r$-triangle inequality and H\"older's inequality, with exponents
$1/r$ and $1/(1-r)$, give
%\begin{align*}
%\left\|\sum_i x_i\right\|^r
%&\leq\sum_j\|y_j\|^r\\
%&\lesssim\sum_j(1+j)^{-(l-k)r}u_j^r\\
%&\leq\left(\sum_j u_j\right)^r
%\left(\sum_j(1+j)^{-(l-k)r/(1-r)}\right)^{1-r}.
%\end{align*}
\[
\left\|\sum_i x_i\right\|^r
\leq\sum_j\|y_j\|^r\lesssim\sum_j(1+j)^{-(l-k)r}u_j^r\leq\left(\sum_j u_j\right)^r
\left(\sum_j(1+j)^{-(l-k)r/(1-r)}\right)^{1-r}.
\]
The last series converges by the choice of $r$. Taking $r$-th roots proves
\eqref{eq:||<M}. Applying the estimate to differences of partial sums, and
using the density of the finitely supported sequences in $\ell(f_l)$, shows
that the partial sums of any family with $(\|x_i\|)\in\ell(f_l)$ are Cauchy. %is summable in $X$$.
 The completeness of
$X$ finishes the proof.
\end{proof}

To establish the results in full generality, we may use the following nonseparable versions of the preceding spaces. Let $I$ be a set and put
$$
\ell(I,f_k)=\left\{a\in\mathbb K^I:
\sum_{i\in I}|a_i|\left(1+\log^k\frac{\|a\|_1}{|a_i|}\right)<\infty
\right\}
$$
with the obvious quasinorm. Every element of $\ell(I,f_k)$, or $\ell_1(I)$, has countable support. The following result shows that the spaces $\ell(I,f_l)$ are a kind of free objects. Its proof is clear from Lemma~\ref{lem:galb}:

\begin{proposition}\label{prop:orlicz-operators}
    Let $X$ be a quasi-Banach space such that  $c(n;X)\lesssim \log^k n$ and let $I$ be any set. If $l>k$, then every bounded function $x:I\to X$ extends to an operator $u: \ell(I,f_l)\to X$ in the form $u(a)=\sum_{i\in I} a_i x(i)$.\end{proposition}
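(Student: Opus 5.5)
The plan is to define $u$ first on finitely supported elements, prove a uniform bound there using Lemma~\ref{lem:galb}, and then extend to all of $\ell(I,f_l)$ by density and completeness. Let $K=\sup_{i\in I}\|x(i)\|<\infty$. For a finitely supported $a\in\ell(I,f_l)$ set $u(a)=\sum_{i\in I}a_i x(i)$, a finite sum; $u$ is clearly linear on the space $c_{00}(I)$ of finitely supported elements. The goal is to show
\[
\|u(a)\|\leq C'\|a\|_{\ell(I,f_l)}
\]
for such $a$, with $C'$ independent of $a$ and of its support.

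To get this bound, list the support of $a$ so that $|a_{i_1}|\geq|a_{i_2}|\geq\cdots\geq|a_{i_N}|$. The vectors $y_n=a_{i_n}x(i_n)$ satisfy $\|y_n\|\leq K|a_{i_n}|$, but their norms need not decrease, so Lemma~\ref{lem:galb} does not apply verbatim. I would handle this by reordering the $y_n$ by decreasing norm. If $\sigma$ is that reordering, then $\|y_{\sigma(n)}\|$ is the $n$-th largest of the $\|y_m\|$. This is bounded by the $n$-th largest of the $K|a_{i_m}|$, which equals $K a^*_n$, where $a^*$ is the decreasing rearrangement of $|a|$. Lemma~\ref{lem:galb} then gives
\[
\|u(a)\|\leq C\sum_{n}\|y_{\sigma(n)}\|(1+\log^l n)\leq CK\sum_n a^*_n(1+\log^l n)=CK\|a\|_*.
\]
By Corollary~\ref{cor:lorentz}, which transfers verbatim to arbitrary index sets since everything has countable support, $\|a\|_*$ is equivalent to the quasinorm of $\ell(I,f_l)$. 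This yields the desired bound.

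For the extension, finitely supported elements are dense in $\ell(I,f_l)$: every element has countable support, and the tails of $\sum|a_i|(1+\log^l(\|a\|_1/|a_i|))$ tend to zero, so truncations converge in quasinorm. Since $X$ is complete and $u$ is a bounded linear map on a dense subspace (quasinorms are continuous up to an equivalent $r$-norm by Aoki--Rolewicz), $u$ extends uniquely to an operator on $\ell(I,f_l)$. To identify the extension with $\sum_i a_i x(i)$, I would apply the convergence part of Lemma~\ref{lem:galb} along the countable support, enumerated decreasingly. The same estimate applied to finite blocks shows that the partial sums are Cauchy, unconditionally in the sense of the net of finite subsets, and that their limit is $u(a)$. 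Finally, $u(e_i)=x(i)$, so $u$ extends $x$.

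The main obstacle is the step where the norms $\|a_i x(i)\|$ are not ordered in the same way as $|a_i|$. This is resolved by the rearrangement remark above, the observation that the $n$-th largest of pointwise-dominated numbers is dominated by the $n$-th largest of the dominating ones. The remaining points, equivalence of quasinorms in the nonseparable setting and density, are routine.
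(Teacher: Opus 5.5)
Your proof is correct and follows the paper's route. The paper simply declares the result ``clear from Lemma~\ref{lem:galb}'', and your argument unpacks exactly that remark: a uniform bound on finitely supported elements via Lemma~\ref{lem:galb} and Corollary~\ref{cor:lorentz}, followed by density and completeness. Your rearrangement observation, that the $n$-th largest of the numbers $\|a_i x(i)\|$ is at most $K a^*_n$, is precisely the small point the paper leaves implicit, since Lemma~\ref{lem:galb} is stated only for families with decreasing norms.
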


The inevitable consequence is the implication $(2)\Longrightarrow(3)$ we
are looking for.

\begin{corollary}\label{2implies3}
If $c(n;X)\lesssim \log^k n$ and $l>k$, then $X$ is a quotient of
$\ell(I,f_l)$ for some index set $I$. \end{corollary}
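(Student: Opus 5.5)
We take $I$ to be the closed unit ball $B_X$ of $X$ and let $x:I\to X$ be the inclusion, which is a bounded function. Since $l>k$ and $c(n;X)\lesssim\log^k n$, Proposition~\ref{prop:orlicz-operators} provides an operator $u:\ell(I,f_l)\to X$ with $u(a)=\sum_{i\in I}a_i x(i)$. We will show that $u$ is onto and open.

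Surjectivity is nearly immediate. For $y\in X$ with $y\neq 0$, put $i=y/\|y\|\in B_X$. Then $\|y\|e_i\in\ell(I,f_l)$ has quasinorm exactly $\|y\|$, because unit vectors have size $1$ in the quasinorm of Corollary~\ref{cor:lorentz}. Moreover $u(\|y\|e_i)=y$. Hence $u$ maps the unit ball of $\ell(I,f_l)$ onto a set containing $B_X$.

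To conclude that $u$ is a quotient map in $\bf Q$, we use the open mapping theorem for quasi-Banach spaces: a surjective operator between quasi-Banach spaces is open. We could equally read openness off the previous paragraph, since $u(B_{\ell(I,f_l)})\supseteq B_X$ directly. Consequently $X\cong \ell(I,f_l)/\ker u$, and as recalled in Section~3, exactness in $\bf Q$ is the same as algebraic exactness. So $X$ is a quotient of $\ell(I,f_l)$ in the categorical sense.

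The only real content is that $u$ is well defined and bounded on all of $\ell(I,f_l)$. This requires the series $\sum a_i x(i)$ to converge unconditionally and to be controlled by the quasinorm, which is exactly what Lemma~\ref{lem:galb} and Proposition~\ref{prop:orlicz-operators} supply. Once those results are granted, we expect no genuine obstacle in this argument.
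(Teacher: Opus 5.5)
Your proof is correct and is essentially the paper's argument: the paper also uses Proposition~\ref{prop:orlicz-operators} to define the operator $e_u\mapsto u$. The one difference is that the paper indexes by a dense subset $I$ of the unit ball, which keeps $I$ countable when $X$ is separable but needs a standard successive-approximation step to get surjectivity, whereas your choice $I=B_X$ makes $u(B_{\ell(I,f_l)})\supseteq B_X$ immediate.
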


\begin{proof} Let $I$ be a dense subset of the unit ball of $X$. The operator
$T: \ell(I,f_l)\to X$ given by $Te_u=u$ is a quotient map. \end{proof}

Turpin, whose work on the theory of non-locally convex spaces remains among the finest ever, introduced the concept of a \emph{galbe} in \cite[Chapitre II]{turpin}. In Turpin's terminology, Lemma~\ref{lem:galb} means  that $X$ is galbed by $\ell(f_l)$. Note that if $Z$ is galbed by a quasi-Banach sequence space $E$, then so is any subspace or quotient of $Z$, and $E$ has the lifting property that every operator from $E$ to a quotient  of $Z$ lifts to $Z$:
$$
\xymatrix{ & & Z \ar[d]^{\mathrm{quotient}} \\
E \ar[rru]^{\mathrm{lifting}} \ar[rr]_{\mathrm{operator}}& & Q}
$$

\section{Interpolation of quasi-Banach spaces --- Proof of $(3) \Longrightarrow (1)$}\label{3implies1}

Following Pierre Menard's lead once again, this section includes a concise, ultimately superfluous, account of the basic complex interpolation facts needed to ease the reader's understanding that for each $k\in\mathbb N$ the space $\ell(f_k)$ introduced in the previous section is a well-located subspace of an iterated extension of Banach spaces. It is superfluous because in the specific situation we will work with, one might ignore
almost everything we are about to explain and simply apply Hadamard's three-line theorem \cite{hadamard} coordinate by coordinate.

%all the plurisubharmonic stuff and just focus on a coordinate-by-coordinate interpolation.

\medskip

Until further notice, the ground field is $\mathbb C$, the complex numbers. While complex interpolation theory is well established for Banach spaces, complex interpolation for quasi-Banach spaces presents additional difficulties stemming from the failure of the maximum modulus principle. Fortunately, our situation is far more favourable
because we will be interpolating inside a Banach space, in fact, a Hilbert one.

% since the usual quasinorm of the quasi-Banach spaces $L_p, \ell_p$ carry plurisubharmonic (PSH) quasinorms, which for practical purposes means that the maximum modulus principle holds; cf. Kalton--Mitrea \cite[Section~3]{kalton-mitrea}.\medskip

If $Y$ is a quasi Banach space and $U$ is an open set of the complex plane, a function $f:U\to Y$ is said to be analytic if for every $z_0\in U$ there is $r>0$ and a sequence $(y_n)_{n\geq 0}$ in $Y$ such that
$f(z)=\sum_{n=0}^\infty y_n(z-z_0)^n $ uniformly on $|z-z_0|\leq r$. This is equivalent to complex differentiability when $Y$ is a Banach space, but not in general. If the reader is unfamiliar with the theory of analytic functions with values in quasi Banach spaces, we recommend consulting \cite[Section 3]{kalton-mitrea}, \cite{kalton-analytic}, \cite{kalton-pluri} or \cite[Chapitre IX]{turpin}.

\adef A compatible couple of quasi-Banach spaces consists of the following data:
\begin{itemize}
    \item[(a)] Two quasi-Banach spaces $X_0, X_1$ and a Hausdorff topological vector space $W$.
    \item[(b)] For $k=0,1$, an injective continuous operator $\jmath_k: X_k\to W$.
\end{itemize}
\zdef

It is standard to say that $(X_0, X_1)$ is a compatible couple, omitting from the notation the most essential part: the inclusions into the ambient space, which are understood. Let us leave it as is. More often than not, $W$ is replaced by the sum space $\Sigma=\{x\in W: x=x_0+x_1 \text{ with $x_k\in X_k; k=0,1$}\}$ endowed with the quasinorm
$$
\|x\|_\Sigma=\inf\{  \|x_0\|_{X_0}+\|x_1\|_{X_1}:   x=x_0+x_1, \text{ with $x_k\in X_k; k=0,1$}\}.
$$
and the obvious operators $X_k\to \Sigma$.\smallskip

%Let $(X_0, X_1)$ be a compatible couple of quasi-Banach spaces sitting in $W$.
 %We assume from now on that both $X_0$ and $X_1$ carry PSH quasinorms. The only couple we really need here is $X_0=\ell_p$ and $X_1=\ell_q$, with $0<p<q<\infty$, so that we may take $W=\ell_q$ and the formal inclusions. At one instance, we will also consider the case $X_0=L_p, X_1=L_q$ with $W=L_p$ and the formal inclusions.

We now introduce the spaces of analytic functions we require, keeping things as simple as possible, but no simpler. Let $\mathbb S= \{z\in\mathbb C: 0<\operatorname{Re}z<1\}$ be the unit strip and $\overline{\mathbb S}$ its closure. Also, for $k=0,1$, let $\partial_k =\{k+yi: y\in\mathbb R\}$ the corresponding half of the boundary of $\mathbb S$. Given a compatible couple $(X_0,X_1)$ with sum $\Sigma$ we denote by $\mathscr A = \mathscr A(X_0,X_1)$ the space of those bounded analytic functions $f:{\mathbb S}\to \Sigma$ satisfying the following additional conditions:
\begin{itemize}
    \item $f$ admits a continuous (hence bounded) extension $\overline{\mathbb S}\to \Sigma$, which we denote again by $f$.
 %   \item The restriction of $f$ to $\mathbb S$ if analytic.
    \item For $k=0,1$, the restriction of (the extension of) $f$ to $\partial_k$  is continuous and bounded from $\partial_k$ to $X_k$.
\end{itemize}

We equip $\mathscr A(X_0,X_1)$ with the quasinorm
$$
\|f\|_{\mathscr A}=\max\left\{ \sup_{k=0,1; z\in \partial_k} \|f(z)\|_{X_k}, \sup_{z\in\mathbb S}\|f(z)\|_\Sigma\right\},
$$
and quote Kalton--Mitrea \cite[p.~3913]{kalton-mitrea}: \emph{The second term in the definition of $\|\cdot\|_{\mathscr A}$  is precisely designed to ensure both
the completeness of $\mathscr A$ and the continuity of the evaluation functions. However, it
should be pointed out that this term can be dropped as being dominated by the
first if $\Sigma$ is A-convex.} (Here, a quasi-Banach $X$ is said to be A-convex, for analytically convex, if there is a constant $C$ such that for every ``polynomial'' $P:\mathbb C\to X$ one has $\|P(0)\|\leq C\max_{|z|=1}\|P(z)\|$.) This applies to the couple used below whose sum space is even a Banach space. The following result isolates
the key features of $\mathscr A$:

 \begin{lemma}$\;$
 \begin{itemize}
\item[(a)] For each $z\in \mathbb S$ and every $k\in \mathbb N$, the evaluation map $\delta_z:\mathscr A\to \Sigma$ and the evaluation of the $k$-th derivative map $\delta_z^{(k)}: \mathscr A\to \Sigma$ at $z$ are bounded.
\item[(b)] For every conformal equivalence
$\varphi:\mathbb S\to\mathbb D$ and every analytic function $f:\mathbb S\to \Sigma$ we have
$f\in\mathscr A$ if and only if $\varphi \cdot f\in\mathscr A$ and
$\|\varphi \cdot f\|_{\mathscr A}= \|f\|_{\mathscr A}$.
\end{itemize}
\end{lemma}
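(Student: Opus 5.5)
For (a), the plan is to exploit that $\Sigma$ is a Banach space here. Then Cauchy's integral formula is available for $\Sigma$-valued analytic functions: for $z\in\mathbb S$ choose $r>0$ with $\overline{D(z,r)}\subset\mathbb S$ (say $r=\tfrac12\min\{\operatorname{Re}z,1-\operatorname{Re}z\}$). This gives
$$
f^{(k)}(z)=\frac{k!}{2\pi i}\oint_{|w-z|=r}\frac{f(w)}{(w-z)^{k+1}}\,dw ,
$$
and hence $\|f^{(k)}(z)\|_\Sigma\le k!\,r^{-k}\sup_{w\in\mathbb S}\|f(w)\|_\Sigma\le k!\,r^{-k}\|f\|_{\mathscr A}$. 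The case $k=0$ is even simpler: $\|\delta_z f\|_\Sigma\le \|f\|_{\mathscr A}$ follows directly from the second term in the definition of the quasinorm.

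If one wants (a) for a general quasi-Banach sum, I would avoid the integral formula. Instead I would use the local power series in the definition of analyticity and recover $f^{(k)}(z)=k!\,y_k$. The step to control is bounding $y_k$ by $\sup\|f\|_\Sigma$ on a small disc, which is a Cauchy-type estimate. In a general quasi-Banach space this is exactly where A-convexity, or the completeness/closed graph argument of Kalton--Mitrea, is needed. The cleanest route is the closed graph theorem: $\delta_z^{(k)}:\mathscr A\to\Sigma$ is linear between quasi-Banach spaces, and its graph is closed. The reason is that $\mathscr A$-convergence implies uniform convergence on $\mathbb S$ in $\Sigma$. Uniform limits of analytic functions have coefficients that are limits of coefficients; this needs justification, and it is the main obstacle in the non-locally convex case. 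Since the paper only needs the Banach sum case, I would state and prove it there via Cauchy.

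For (b), let $\varphi:\mathbb S\to\mathbb D$ be a conformal equivalence. It extends continuously to $\overline{\mathbb S}$, with $|\varphi|=1$ on $\partial_0\cup\partial_1$ and $|\varphi|<1$ inside. I read $\varphi\cdot f$ as the pointwise product. If $f\in\mathscr A$, then $\varphi f$ is analytic, since the product of scalar power series with vector ones is a power series, and it is continuous and bounded on $\overline{\mathbb S}$ and on each $\partial_k$ into $X_k$. On the boundary, $\|\varphi f(z)\|_{X_k}=\|f(z)\|_{X_k}$, and inside, $\|\varphi f\|_\Sigma\le\|f\|_\Sigma$.

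So $\|\varphi f\|_{\mathscr A}\le\|f\|_{\mathscr A}$. Strictly, the boundary sup is preserved exactly, and the interior sup can only decrease. To obtain equality I would use the remark that when $\Sigma$ is A-convex (here Banach), the interior term is dominated by the boundary term via the maximum principle; I would then interpret the norm as (equivalent to) the boundary one.

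For the converse direction, suppose $\varphi f\in\mathscr A$ with $f$ analytic. Let $a$ be the unique zero of $\varphi$. Then $f=(\varphi f)/\varphi$ is analytic away from $a$ and analytic at $a$ by hypothesis, so $f$ is bounded near $a$. Boundedness and continuity up to the boundary also hold, since $|\varphi|$ is bounded below near $\partial\mathbb S$, including the ends at $\pm i\infty$, where $\varphi$ tends to the two boundary points $\pm1$ of modulus $1$. The boundary values lie in $X_k$ with the same norms. The delicate point is the behaviour at infinity in the strip, which I would check with the explicit map $\varphi(z)=\frac{e^{i\pi z}-e^{i\pi a}}{e^{i\pi z}-e^{-i\pi\bar a}}$ up to a unimodular constant.
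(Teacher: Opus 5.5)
\textbf{Gap: part (a) for $k\ge1$ when $\Sigma$ is only quasi-Banach.} Your proof is complete only when $\Sigma$ is a Banach space. The lemma is stated for arbitrary compatible couples of quasi-Banach spaces, and in that generality you do not prove (a) for $k\ge1$.

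The closed graph route does not remove the difficulty. To close the graph of $\delta_z^{(k)}$ you must show the following: if $f_n\to f$ in $\mathscr A$ (that is, uniformly on $\mathbb S$ in $\Sigma$) and $f_n^{(k)}(z)\to y$, then $y=f^{(k)}(z)$. Without local convexity this is no easier than boundedness itself. $\Sigma^*$ may be trivial, so you cannot scalarize. There is no Cauchy integral, and the maximum principle fails. What is needed is a Cauchy-type inequality bounding Taylor coefficients at $z$ by the supremum of $\|f\|_\Sigma$ over a whole disc around $z$, which is exactly why the interior term sits in $\|\cdot\|_{\mathscr A}$. That is a genuine fact about analytic functions with values in quasi-Banach spaces. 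The paper does not prove it either; it takes it from Lemma 2.4 of \cite{ccc} or Section 2 of \cite{cck}. So you should either cite such a result or state explicitly that you prove (a) only for Banach sums.

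\textbf{What is correct.} In the Banach case your argument is right, and this covers every use of the lemma in the paper. For $(\ell_{2/3},\ell_2)$ one has $\|x\|_\Sigma=\|x\|_2$ exactly, because $\|x_0\|_2\le\|x_0\|_{2/3}$.

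For (b) you follow the paper's hint: $\varphi$ is continuous on $\overline{\mathbb S}$, $|\varphi|=1$ on $\partial_0\cup\partial_1$, and $|\varphi|\to1$ at the two ends of the strip. Your converse direction is fine. For your explicit map the limits at the ends are $1$ and $e^{i\pi a}/e^{-i\pi\bar a}$, not $\pm1$.

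Your concern about the interior term is well placed. In the Banach case, however, you can drop ``equivalent''. Apply the Phragm\'en--Lindel\"of (three lines) principle to $x^*\circ f$ with $\|x^*\|\le1$, and use $\|x\|_\Sigma\le\|x\|_{X_k}$ for $x\in X_k$. This gives $\sup_{\mathbb S}\|f\|_\Sigma\le\sup_{k,\,z\in\partial_k}\|f(z)\|_{X_k}$. Hence $\|f\|_{\mathscr A}$ equals the boundary supremum, and $\|\varphi f\|_{\mathscr A}=\|f\|_{\mathscr A}$ holds exactly with the definition as given.

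Some hypothesis of this kind is genuinely needed for the exact identity in (b). Suppose $X_0=X_1$ is not A-convex. Compose a polynomial that badly violates the maximum principle with a conformal map $\mathbb S\to\mathbb D$. This yields $f\in\mathscr A$ whose $\mathscr A$-norm comes from the interior term and is approached only on a compact subset of $\mathbb S$. There $|\varphi|\le1-\delta$, so $\|\varphi f\|_{\mathscr A}<\|f\|_{\mathscr A}$.
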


\begin{proof}[Hint for the proof]
(a) is as trivial as it is obvious for $k=0$. For $k\geq 1$ see \cite[Lemma 2.4]{ccc} or the last paragraph of Section~2 of \cite{cck}. Part (b) ultimately depends on the structure of the strip: every conformal map $\varphi:\mathbb S\to\mathbb D$ has a continuous extension $\varphi:\overline{\mathbb S}\to \overline{\mathbb D}$ which maps the boundary of $\mathbb S$ to $\mathbb T$ and omits exactly two points; see \cite[Section 8.1]{pisier}. From this, the reader should have no difficulty in completing the proof.
\end{proof}

The lemma yields that, for every $z \in \mathbb S$, the space $\mathscr A$ is isometric to its subspace $\operatorname{Ker}\delta_z = \{f \in \mathscr A : f(z) = 0\}$, the isometry being given by multiplication by any conformal map $\varphi:\mathbb S\to\mathbb D$ vanishing at $z$. Given $0<\theta<1$, we now define the interpolation space
$$X_\theta= (X_0, X_1)_\theta = \{x\in W: x=f(\theta) \text{ for some $f\in\mathscr A$}\}$$
and endow it with the quotient quasinorm, namely
$$
\|x\|_{X_\theta}= \inf_{x=f(\theta)} \|f\|_{\mathscr{A}} \,.
$$
Note that if $\delta_\theta:\mathscr A\to W$ is the evaluation map, then $X_\theta$ is isometric (but not equal) to the quotient of $\mathscr A$ by $\Ker \delta_\theta$. For example, for $0<p,q<\infty$, considering  $\ell_p$ and $\ell_q$ embedded in $\ell_\infty$ by the formal identity, we have
$$(\ell_p, \ell_q)_\theta = \ell_r, \quad \text{where}\quad
\frac{1}{r}= \frac{1-\theta}{p}+\frac{\theta}{q};
$$
see \cite[Theorem 3.4]{kalton-mitrea} for a much more general result. In particular, if $X_1=\ell_2$, we can set $X_0=\ell_{2/3}$ to get $X_{\theta}=\ell_1$ at $\theta=1/2$. This is the only couple that we need here.

\subsection{Derived spaces} Following Rochberg \cite{rochberg}, for each $n\in\mathbb N$, we define the $n$-th derived space
\begin{equation*}
    X_\theta^{(n)} = \left\{(x_{n-1}, \dots, x_0)\in \Sigma^n:
\text{ there is $f\in\A$ such that $x_k= \dfrac{f^{(k)}(\theta)}{k!}$ for $0\leq k<n$}     \right\}\,
\end{equation*}
endowed with the quasinorm  $\|(x_{n-1}, \dots, x_0)\|_{ X_\theta^{(n)}}=\inf \|f\|_{\A}$. As before, $X_\theta^{(n)}$ is isometric to the quotient of $\A$ by its closed subspace $\bigcap_{k<n}\ker \delta^{(k)}_\theta$ and thus it is a quasi-Banach space. Although the long-suffering reader might think that we are going from bad to worse since none of these things seems to have anything to do with Serre closures, that is not the case: the derived spaces are iterated extensions of the interpolation spaces $X_\theta=X_\theta^{(1)}$ in the following sense: given $n,k\in\mathbb N$, the obvious sequence
\begin{equation*}%\label{dia:n n+k k}
\xymatrix{
0\ar[r] &X_\theta^{(n)} \ar[r]^-\imath &X_\theta^{(n+k)}\ar[r]^-\pi& X_\theta^{(k)} \ar[r] &0\\
%0\ar[r]&X_\theta^{(n-j)} \ar[d]\ar[r]&  X_\theta^{(m-%j)}\ar[d]\ar[r]& X_\theta^{(m-n)} \ar[r]& 0\\
%&0&0
}
\end{equation*}
is exact. Here, as one may guess, $\imath(x_n,\dots, x_1)=(x_n,\dots, x_1, 0,\dots, 0)$, with $k$ zeros appended, and $\pi(x_{n+k},\dots,x_{k+1}, x_k,\dots, x_1)= (x_k,\dots, x_1)$. It is the exactness that is not obvious. In fact, even the boundedness of the $\imath$
is far from being trivial; see \cite[Section~3]{cck}, especially Proposition~2 and Theorem~4. Once we know that, it is clear that when $X_\theta^{(1)}$ is a Banach space all the derived spaces $X_\theta^{(n)}$ belong to the Serre closure of the class of Banach spaces since they are iterated extensions of Banach spaces, with $X_\theta^{(n)}\in{\bf B}^{(n)}$. Those readers who have properly assimilated Section~\ref{subsec:quasilinear} will not be surprised that the exact sequences above can be described by quasilinear maps. We only need the case $k=1$. Fix $\varepsilon>0$ and, for each $x\in X_\theta$, choose $F_x\in \mathscr A$ such that $F_x(\theta)=x$, with $\|F_x\|_\mathscr A\leq (1+\varepsilon)\|x\|_{X_\theta}$, in such a way that $F_x$ depends homogeneously on $x$ in the sense that if $y=\lambda x$ for some $\lambda\in\mathbb C$, then $F_y=\lambda F_x$. This just means that the map $F_\bullet :X\to \mathscr A$ sending $x$ to $F_x$ is a bounded homogeneous section of the evaluation map $\delta_\theta :\mathscr A\to X_\theta$. The reason for this notation is that we prefer to reserve $F_x(z)$ for the value of $F_x:S\to \Sigma$ at the point $z\in \mathbb S$. The analytic function $F_x$ is often called an extremal for $x$.

\begin{lemma}\label{lem:rochberg-twisted} Fix $0<\theta<1$ and $n\in\mathbb N$. If $F_\bullet: X_\theta\to\mathscr A$  is a bounded homogeneous section of the evaluation map, then the map $\Omega_{1,n}:X_\theta \to\Sigma^n$ given by
$$
\Omega_{1,n}(x)=\left(\frac{F_x^{(n)}(\theta)}{n!},\ldots,
F_x'(\theta)\right)
$$
is quasilinear from $X_\theta$ to $X_\theta^{(n)}$ and   the twisted sum
$X_\theta^{(n)}\oplus_{\Omega_{1,n}}X_\theta$ agrees with $X_\theta^{(n+1)}$, up to equivalent quasinorms.
%$\|(x_n,\ldots,x_1,x_0)\|_{X_\theta^{(n+1)}}$ and $\|(x_n,\ldots,x_1)-\Omega_{1,n}(x_0)\|_{X_\theta^{(n)}} +\|x_0\|_{X_\theta}.$
\end{lemma}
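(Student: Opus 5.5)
The plan is to work directly with the extremals and the derived-space quasinorms, using only the boundedness of the evaluations of derivatives (the preceding lemma, part (a)) and the exactness of the sequence $X_\theta^{(n)}\to X_\theta^{(n+1)}\to X_\theta$ quoted from \cite{cck}.

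\emph{Step 1: homogeneity and quasilinearity.} Homogeneity of $\Omega_{1,n}$ is immediate, since $F_{\lambda x}=\lambda F_x$ and differentiation at $\theta$ is linear. For $x,y\in X_\theta$ put $G=F_{x+y}-F_x-F_y\in\mathscr A$. Then $G(\theta)=0$ and
$$
\|G\|_{\mathscr A}\lesssim \|F_{x+y}\|_{\mathscr A}+\|F_x\|_{\mathscr A}+\|F_y\|_{\mathscr A}\lesssim \|x\|_{X_\theta}+\|y\|_{X_\theta},
$$
using the quasi-triangle inequality of $\mathscr A$ and $\|x+y\|_{X_\theta}\lesssim \|x\|+\|y\|$. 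Next choose a conformal map $\varphi:\mathbb S\to\mathbb D$ with $\varphi(\theta)=0$. Because $G$ vanishes at $\theta$, we can write $G=\varphi\cdot H$ with $H$ analytic. By part (b) of the preceding lemma, $H\in\mathscr A$ and $\|H\|_{\mathscr A}=\|G\|_{\mathscr A}$. The only delicate point here is that division by $\varphi$ preserves membership in $\mathscr A$; this is exactly the isometry $\mathscr A\cong\Ker\delta_\theta$ recorded after that lemma. Now
$$
\Omega_{1,n}(x+y)-\Omega_{1,n}(x)-\Omega_{1,n}(y)=\Bigl(\tfrac{G^{(n)}(\theta)}{n!},\dots,G'(\theta)\Bigr),
$$
and by Leibniz, $G^{(k)}(\theta)/k!=\sum_{j=1}^{k}\frac{\varphi^{(j)}(\theta)}{j!}\frac{H^{(k-j)}(\theta)}{(k-j)!}$. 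So this vector is obtained from $\bigl(H^{(n-1)}(\theta)/(n-1)!,\dots,H(\theta)\bigr)$ by a fixed lower-triangular Toeplitz matrix with entries $\varphi^{(j)}(\theta)/j!$ and diagonal $\varphi'(\theta)\neq0$. In other words, it equals $\bigl(\tilde H^{(n-1)}(\theta)/(n-1)!,\dots,\tilde H(\theta)\bigr)$, where $\tilde H=\psi H$ and $\psi(z)=\varphi(z)/(z-\theta)$ is bounded analytic on $\overline{\mathbb S}$ (there is no real issue once $\varphi$ is a M\"obius-type map composed with $e^{i\pi z}$). Hence it lies in $X_\theta^{(n)}$ with quasinorm at most $\|\tilde H\|_{\mathscr A}\lesssim \|G\|_{\mathscr A}\lesssim\|x\|+\|y\|$. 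This gives quasilinearity from $X_\theta$ to $X_\theta^{(n)}$, with ambient space $W=\Sigma^n$.

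\emph{Step 2: identification of the twisted sum.} We compare $\|(x_n,\dots,x_0)\|_{X_\theta^{(n+1)}}$ with
$$
N(x_n,\dots,x_0)=\|(x_n,\dots,x_1)-\Omega_{1,n}(x_0)\|_{X_\theta^{(n)}}+\|x_0\|_{X_\theta}.
$$

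\emph{Upper bound for $N$.} Let $f\in\mathscr A$ represent $(x_n,\dots,x_0)$ in $X_\theta^{(n+1)}$. Then $\|x_0\|_{X_\theta}\le\|f\|_{\mathscr A}$. Moreover $f-F_{x_0}$ vanishes at $\theta$, so writing it as $\varphi H$ and arguing as in Step 1, the vector $(x_n,\dots,x_1)-\Omega_{1,n}(x_0)$ is represented in $X_\theta^{(n)}$ by a function of norm $\lesssim\|f\|_{\mathscr A}+\|F_{x_0}\|_{\mathscr A}\lesssim\|f\|_{\mathscr A}$.

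\emph{Reverse bound.} Given $(x_n,\dots,x_0)$, let $g\in\mathscr A$ represent $(x_n,\dots,x_1)-\Omega_{1,n}(x_0)$ in $X_\theta^{(n)}$. Take $f=F_{x_0}+\varphi\cdot\psi^{-1}\ldots$; more cleanly, take $f=F_{x_0}+\varphi\cdot h$, where $h$ is chosen so that the first $n$ Taylor coefficients of $\varphi h$ beyond the constant term match those of $g$. This requires inverting the triangular Toeplitz matrix. Equivalently, set $h=g/\psi$, noting that $1/\psi$ is bounded analytic near $\overline{\mathbb S}$ after a harmless modification, or simply multiply $g$ by a polynomial correction. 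Then $f(\theta)=x_0$, the derivatives match, and $\|f\|_{\mathscr A}\lesssim\|x_0\|+\|g\|_{\mathscr A}$.

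\emph{Expected obstacle.} The main technical obstacle is this reverse bound: realising an arbitrary ``jet shift'' of a function in $\mathscr A$ by an element of $\mathscr A$, that is, controlling the Toeplitz change of variables between $G$ and $H$. I would handle it by the standard device of \cite{cck}: working with $\varphi^{j}$, $j\le n$, and the bounded maps $\delta^{(k)}_\theta$, which show that $X_\theta^{(n)}\to X_\theta^{(n+1)}$, $(x_{n-1},\dots,x_0)\mapsto(x_{n-1},\dots,x_0,0)$ is bounded. Once this embedding is bounded, the comparison above is a routine open-mapping argument on the exact sequence, since both quasinorms are complete.
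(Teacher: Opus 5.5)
Your argument is correct, and it takes a different route from the paper. The paper gives no proof of this lemma: it cites \cite[Section 3.5, Proposition~7]{cck}, with the exactness of $X_\theta^{(n)}\to X_\theta^{(n+1)}\to X_\theta$ and the boundedness of $\imath$ taken from \cite[Proposition~2, Theorem~4]{cck}.

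\emph{What your route does.} Your Step~1 factors $G$ as $\varphi H$ through the isometry $\mathscr A\cong\Ker\delta_\theta$. Multiplying by the bounded function $\psi=\varphi/(z-\theta)$ then turns the shifted jet of $G$ into the jet of $\psi H$. This proves quasilinearity from scratch. The same device gives $N\lesssim\|\cdot\|_{X_\theta^{(n+1)}}$, and with it the inclusion of $X_\theta^{(n+1)}$ in the twisted sum. Once you grant boundedness of $\imath$ from \cite{cck}, the reverse estimate needs no open mapping argument. Write $(w,x_0)=\imath\bigl(w-\Omega_{1,n}(x_0)\bigr)+\bigl(\Omega_{1,n}(x_0),x_0\bigr)$. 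The second term is the jet of $F_{x_0}$, of quasinorm at most $\|F_{x_0}\|_{\mathscr A}\lesssim\|x_0\|_{X_\theta}$.

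\emph{A loose step.} Your direct attempt at the reverse bound is imprecise as written. Since $|\varphi|\le 1$, one has $|1/\psi(z)|\geq|z-\theta|$, so $1/\psi$ is unbounded on $\mathbb S$. Any nonconstant polynomial in $z$ is unbounded there too. Hence neither $g/\psi$ nor ``a polynomial correction'' stays in $\mathscr A$.

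\emph{The fix.} Take $q=\sum_{j<n}c_j\varphi^j$ with the same Taylor coefficients of order $<n$ at $\theta$ as $1/\psi$. This is possible because $\varphi$ has a simple zero at $\theta$, so the jets of $1,\varphi,\dots,\varphi^{n-1}$ are triangular. Then $\psi q=1+O((z-\theta)^n)$, so $f=F_{x_0}+\varphi q g$ represents $(x_n,\dots,x_0)$. Moreover $\|f\|_{\mathscr A}\lesssim\|x_0\|_{X_\theta}+\sup|q|\,\|g\|_{\mathscr A}$.

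With this, your ``expected obstacle'' disappears. The same trick, $f=\varphi^k q g$ with $q$ matching the jet of $\psi^{-k}$, also gives the boundedness of $\imath:X_\theta^{(n)}\to X_\theta^{(n+k)}$. Your proof then depends only on part (b) of the preceding lemma. The paper's citation buys brevity; your route buys a complete, elementary proof.
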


In particular, for each fixed $\theta$ and $n$ one has (see \cite[Section 3.5, Proposition~7]{cck}):
\begin{equation}
\|(x_n,\ldots,x_1,x_0)\|_{X_\theta^{(n+1)}}\sim \|(x_n,\ldots,x_1)-\Omega_{1,n}(x_0)\|_{X_\theta^{(n)}} +\|x_0\|_{X_\theta}.
\end{equation}

%\medskip

When we particularize to the couple $(\ell_{2/3}, \ell_2)$ the spaces $\ell(f_n)$ magically appear as well placed subspaces of the corresponding $X_\theta^{(n+1)}$.

Since $\ell_{2}$ contains $\ell_{2/3}$ the sum space is just $\ell_2$. As we mentioned before,
for $0<\theta<1$ one has $X_\theta=\ell_p$, where
$\frac{1}{p}=\frac{3}{2}(1-\theta)+ \frac{1}{2}\theta$, which in particular implies that $X_{1/2}=\ell_1$.

Set $\theta=1/2$ and let  $x\in \ell_1$ be normalized and positive. Put
$$
F_x(z)=
x^{\frac{3}{2}-z}.
$$
It is clear that:
\begin{itemize}
    \item $F_x$ continuous on $\overline{\mathbb S}$ and  analytic on ${\mathbb S}$;
    \item $F_x(\tfrac{1}{2})=x$, and $\|F_x\|_{\mathscr A}=1$;
    \item actually one has
    \begin{equation*}
        F_x(z)=\sum_{n=0}^\infty \left(\frac{x}{n!}\log^n \frac{1}{x} \right)\big{(}z-\tfrac{1}{2}\big{)}^n
    \end{equation*}
    uniformly on $|z-\tfrac{1}{2}|\leq r $ for every $r<\tfrac{1}{2}$.
\end{itemize}
For arbitrary $x\in\ell_1$, let $x=u|x|$ be the polar decomposition and take
$F_x= u\|x\|_1 F_{|x|/\|x\|_1}$, so that
 \begin{equation*}
        F_x(z)=\sum_{n=0}^\infty \left(\frac{x}{n!}\log^n \frac{\|x\|_1}{|x|} \right)\big{(}z-\tfrac{1}{2}\big{)}^n,
    \end{equation*}
which clearly shows that the Taylor coefficients of the extremal $F_x$ are given by
$$\Phi_n(x)=\frac{x}{n!}\log^n \frac{\|x\|_1}{|x|} 	$$
for $n\geq 1$  and thus the differential map $\Omega_{1,n}$ is
$$
\Omega_{1,n}(x)= (\Phi_n (x),\dots, \Phi_1(x)).
$$
The connection with the original functions $\varphi_k$ in \eqref{eq:phi_k}
is immediately apparent: if $x\geq0$ is normalized in $\ell_1$, then $\Phi_k(x)=\varphi_k\circ x$.
Therefore, given $x\in\ell_1$ one has $\Phi_k(x)\in\ell_1$ if and only if $x\in\ell(f_k)$, with $\|\Phi_k(x)\|_1+\|x\|_1\sim \|x\|_{\ell(f_k)}$.

\begin{proposition}\label{prop:lfk is serre} For each integer $k$, the complex space  $\ell(f_{k})$ is isomorphic to the closed subspace
$\{(0,\dots, 0, x)\in {X_{1/2}^{(k+1)}}\}$. Consequently, $\ell(f_{k})$ belongs to the Serre closure of Banach spaces.
\end{proposition}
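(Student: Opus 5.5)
The plan is to identify the quasinorm of $X_{1/2}^{(k+1)}$ on vectors of the form $(0,\dots,0,x)$ by iterating the description of derived spaces as twisted sums in Lemma~\ref{lem:rochberg-twisted}. By that lemma with $n=k$ and the displayed equivalence,
$$
\|(0,\dots,0,x)\|_{X^{(k+1)}_{1/2}}\sim \|\Omega_{1,k}(x)\|_{X^{(k)}_{1/2}}+\|x\|_1 = \|(\Phi_k(x),\dots,\Phi_1(x))\|_{X^{(k)}_{1/2}}+\|x\|_1 ,
$$
whenever $(0,\dots,0,x)$ lies in $X^{(k+1)}_{1/2}$, which by exactness means exactly that $\Omega_{1,k}(x)\in X^{(k)}_{1/2}$. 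The task then reduces to showing that $\|(\Phi_k(x),\dots,\Phi_1(x))\|_{X^{(k)}_{1/2}}+\|x\|_1\sim\|x\|_{\ell(f_k)}$, with finiteness on the same set of $x$.

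For the upper bound $\lesssim\|x\|_{\ell(f_k)}$, note that $\Phi_j(x)$ is $x$ times $\log^j(\|x\|_1/|x|)$. This gives the pointwise bound $|\Phi_j(x)|\lesssim |x|+|\Phi_k(x)|$ for $j\le k$, since $\log^j\le 1+\log^k$. I then use the elementary inclusion $\ell_1^{k}\to X^{(k)}_{1/2}$ on tuples whose entries are all in $\ell_1$. Concretely, $(y_{k-1},\dots,y_0)$ is realized by the polynomial-times-extremal function $\sum_j F_{y_j}(z)(z-\tfrac12)^j$. That function is in $\mathscr A$ with norm $\lesssim\sum\|y_j\|_1$, and its Taylor coefficients at $1/2$ are triangular combinations of the $\Phi_i(y_j)$. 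Inverting this triangular system needs care.

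A cleaner route is to proceed by induction on $k$ using the exact sequences. Suppose $Y\to Z\to X$ is exact, $F$ is the quasilinear map, $(0,x)\in Z$ and $F(x)\in Y$. Then $\|(0,x)\|\sim\|F(x)\|_Y+\|x\|$. Apply this repeatedly: $\|(\Phi_k,\dots,\Phi_1)(x)\|_{X^{(k)}}$ itself splits as $\|(\Phi_k,\dots,\Phi_2)(x)-\Omega_{1,k-1}(\Phi_1 x)\|_{X^{(k-1)}}+\|\Phi_1x\|_1$. The expected main obstacle is controlling these cross terms $\Omega_{1,j}(\Phi_i x)$.

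Since everything is coordinatewise, I would rather bypass the triangular algebra with Hadamard's three-line theorem applied coordinatewise, as the paper suggests. For the lower bound, take any $f\in\mathscr A$ whose Taylor coefficients at $1/2$ are $(\Phi_k(x),\dots,\Phi_1(x),x)$ up to order $k$. Cauchy estimates on each coordinate $f_n(z)$, a bounded analytic function on the strip, show that $|x_n|\log^k(1/|x_n|)$ is controlled by the $\ell_1$-norm of $f$ on a suitable interior point. The interior point is chosen so that $|f_n|$ is bounded by $|x_n|^{1-\delta}$ there, by three lines with $\sum|f_n(\tfrac12+it)|$ controlled. I expect the honest difficulty to be this lower estimate $\|x\|_{\ell(f_k)}\lesssim\|(0,\dots,0,x)\|$. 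It requires uniformity in $x$, not just pointwise finiteness, and I would try first to derive it from the twisted-sum equivalence together with $\|\Phi_k(x)\|_1\le\|(\Phi_k x,\dots)\|_{X^{(k)}}$ modulo lower-order terms, by induction.

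Granted the equivalence $\|(0,\dots,0,x)\|\sim\|x\|_{\ell(f_k)}$, the map $x\mapsto(0,\dots,0,x)$ is an isomorphism onto a subspace. That subspace is closed, being the kernel of the projection onto the first $k$ coordinates intersected appropriately; alternatively, closedness follows from completeness of $\ell(f_k)$. Since $X^{(k+1)}_{1/2}\in\mathbf B^{(k+1)}$ because $X_{1/2}=\ell_1$ is Banach, and Serre classes are closed under subspaces, $\ell(f_k)$ lies in $\langle\mathbf B\rangle_{\mathbf Q}$.
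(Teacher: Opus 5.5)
\textbf{There is a genuine gap: neither of the two estimates is proved.} Your reduction via Lemma~\ref{lem:rochberg-twisted} and your closing Serre argument match the paper's, but the tools you propose for the two inequalities do not work.

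For the upper bound, the ``elementary inclusion'' of $\ell_1\oplus\cdots\oplus\ell_1$ into $X^{(k)}_{1/2}$ is false for $k\geq 2$. Already $(0,y_0)\in X^{(2)}_{1/2}$ forces $\Phi_1(y_0)\in\ell_1$, i.e.\ $y_0\in\ell(f_1)$; this is the case $k=1$ of the statement itself, and it is exactly why your triangular system cannot be inverted. The pointwise bound $|\Phi_j(x)|\lesssim|x|+|\Phi_k(x)|$ only gives $\ell_1$-membership, which says nothing about $X^{(k)}_{1/2}$-quasinorms (cf.\ the Remark after the proof). The missing idea is to avoid cross terms altogether. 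At the inductive step $k=n$, write $(\Phi_nx,\dots,\Phi_1x)$ as a sum of $n$ vectors, each with a single nonzero entry. The one carrying $\Phi_ix$ equals $\imath(0,\dots,0,\Phi_ix)$, where $\imath:X^{(n-i+1)}_{1/2}\to X^{(n)}_{1/2}$ is bounded. The quasi-triangle inequality and the induction hypothesis then bound $\|(\Phi_nx,\dots,\Phi_1x)\|_{X^{(n)}_{1/2}}$ by a multiple of $\sum_i\|\Phi_ix\|_{\ell(f_{n-i})}$. Finally, the inequality $\varphi_{n-i}\circ\varphi_i\leq\varphi_n$ on $(0,e^{-1}]$ controls each of these terms by $\|x\|_{\ell(f_n)}$.

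For the lower bound, you start from the wrong function. The tuple $(\Phi_kx,\dots,\Phi_1x,x)$ is the jet of the extremal $F_x$, which exists for every $x\in\ell_1$ with $\|F_x\|_{\mathscr A}=\|x\|_1$, so it carries no information about $\ell(f_k)$. The relevant $f$ has jet $(0,\dots,0,x)$. Even then, you give no mechanism by which coordinatewise Cauchy or three-lines estimates would sum to a bound in terms of $\|f\|_{\mathscr A}$, since a sum of suprema is not a supremum of sums. You also overestimate what is needed. The subspace is closed and the upper bound holds, so the open mapping theorem reduces everything to the qualitative implication $(0,\dots,0,x)\in X^{(n+1)}_{1/2}\Rightarrow x\in\ell(f_n)$.

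The paper proves this implication by peeling off the twisted-sum layers: $y^{(1)}_m=\Phi_mx$ and $y^{(k+1)}_m=y^{(k)}_m-\Phi_{m-k}y^{(k)}_k$. This ends with $y^{(n)}_n\in\ell_1$, which is then compared with $\Phi_nx$ coordinatewise off a finite set. Be warned that the corrections are not of lower order, contrary to your expectation. They also contradict \eqref{eq:limfifj}, whose left-hand side actually tends to $1$, not $0$; for instance, coordinatewise $\Phi_1(\Phi_1x)\approx 2\Phi_2x$. Consequently the intermediate coordinates do not stay within $1\pm\varepsilon$ of the original ones. What does hold is the following. Take $x\geq0$ with $\|x\|_1=1$, write $L=\log(1/x(i))\to\infty$, and use $\log(b/|y^{(k)}_k(i)|)=L(1+o(1))$. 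Then $y^{(k)}_m(i)=\frac{(-1)^{k+1}}{m!}\binom{m-1}{k-1}x(i)L^m(1+o(1))$. In particular $|y^{(n)}_n(i)|=|\Phi_nx(i)|(1+o(1))$, which gives $\Phi_nx\in\ell_1$.
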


\begin{proof} Induction on $k$. For $k=0$ this is the basic interpolation identity $X_{1/2}=\ell_1$, with equivalent (actually equal) quasinorms. Assume the statement true for $0\leq k\leq n-1$ and let us prove it for $k=n$. As one can imagine we will check two inclusions and the corresponding bounds.\medskip

First, assume $x\in \ell(f_n)$. We must prove that $(0,\dots, 0, x)$ belongs to ${X_{1/2}^{(n+1)}}$ and that $\|(0,\dots, 0, x)\|_{X_{1/2}^{(n+1)}}\lesssim \|x\|_{\ell(f_{n})}$. Note that $\ell(f_k)\subset \ell(f_{k-1})\subset \ldots \subset \ell(f_1)\subset \ell_1$ and the inclusions are bounded.
One has
\begin{equation*}
\|\overbrace{(0,0, \dots, 0, x)}^{n+1\text{ entries}}\|_{X_{1/2}^{(n+1)}}\sim \|\Omega_{1,n}(x)\|_{X_{1/2}^{(n)}} + \|x\|_1 =  \overbrace{\|(\Phi_n (x),\dots, \Phi_1(x))\|_{X_{1/2}^{(n)}}}^{(\star)} + \|x\|_1.
\end{equation*}
Now (the implicit constant is $c(n;{X_{1/2}^{(n)}})$, if one wants to know):
\begin{gather*}
(\star)
\lesssim\|(\Phi_n (x),0, \dots,0)\|_{X_{1/2}^{(n)}}+
   \|(0, \Phi_{n-1} (x),0, \dots,0)\|_{X_{1/2}^{(n)}}+\dots+\|(0, \dots,0, \Phi_1 (x))\|_{X_{1/2}^{(n)}}
\\
\sim \|\Phi_n (x)\|_{X_{1/2}^{(1)}}+
   \|(0, \Phi_{n-1} (x))\|_{X_{1/2}^{(2)}}+\dots + \|(0, \dots,0, \Phi_2 (x))\|_{X_{1/2}^{(n-1)}} +\|(0, \dots,0, \Phi_1 (x))\|_{X_{1/2}^{(n)}}
 \end{gather*}
%By the way, the implicit constant is $c(n;{X_{1/2}^{(n)}})$ if one wants to know.
The first summand is  $\|\Phi_n (x)\|_1$, which is dominated by $\|x\|_{\ell(f_n)}$. The remaining terms are covered by the induction hypothesis:
$$
  \|(0, \dots,0, \Phi_i (x))\|_{X_{1/2}^{(n-i+1)}}\lesssim \|\Phi_i (x)\|_{\ell(f_{n-i})}.
$$
The terms on the right-hand side of the last display can be controlled through  $\|x\|_{\ell(f_n)}$ too. By
unconditionality and homogeneity, we may assume $x\geq0$ and $\|x\|_1=1$.
For $0<t\leq e^{-1}$ one has
$$
\left(t\log^i\frac{1}{t}\right) \log^{n-i}\frac{1} {t\log^i\frac{1}{t}} \leq t\log^n\frac{1}{t}
$$
(that is, $\varphi_{n-i}(\varphi_i(t))\leq \varphi_n(t)$,  except for the restriction on $t$). Hence, if we
assume that no entry of $x$ exceeds $e^{-1}$ (which is true except for at most two entries!), we have
$$
\left| \frac{x}{i!}\log^i\frac{1}{x}\right|_{\varphi_{n-i}}
\leq \left| {x}\log^i\frac{1}{x}\right|_{\varphi_{n-i}}
\leq \left\| \varphi_{n-i}\left({x}\log^i\frac{1}{x}\right)\right\|_1
= \|\varphi_n(x)\|_1 =|x|_{\varphi_n}
$$
We have shown that if $x$ is normalized in $\ell_1$ and has small quasinorm in $\ell(f_n)$ then $(0,\dots, 0, x)\in {X_{1/2}^{(n+1)}}$ with small (bounded) quasinorm. By homogeneity
$$\|(0,\dots, 0, x)\|_{X_{1/2}^{(n+1)}}\lesssim \|x\|_{\ell(f_{n})}$$
and we have the squonk in the sack, which is half of the battle.\medskip

To get something more than tears we need to prove the other
containment, namely that if $(0,\dots,0,x)$ belongs to $X^{(n+1)}_{1/2}$, then $x\in\ell(f_{n})$ since the open mapping theorem (and the first part) automatically provides the equivalence of the quasinorms. The fact that the points of the above form form a closed subset of the quasi-Banach space $X^{(n+1)}_{1/2}$ is obvious once one realizes that the natural map $X^{(m)}_{1/2}\to \ell_2\oplus \dots \oplus \ell_2$ is bounded. Of course we may assume (and do) that the statement is true for $0\leq k\leq n-1$ and prove it for $k=n$.

This part of the proof is the most demanding of the paper (and indeed it establishes its deepest result), but it ultimately relies on a rather simple fact, namely that for every $i,j,a,b,c,d>0$ one has
\begin{equation}\label{eq:limfifj}
\frac{\left(t\log^i\dfrac{a}{t}\right) \log^j\dfrac{b}{t\log^i(c/t)}}{t\log^{i+j}\dfrac{d}{t}} \longrightarrow 0%\qquad(\text{as }t\to 0^+),
\end{equation}
as $t\to 0^+ $, which essentially means that the composition $\varphi_j\circ \varphi_i$ is much smaller than $\varphi_{i+j}$ for small arguments.

Thus, assume that $(0,\dots,0,x)$ belongs to $X^{(n+1)}_{1/2}$.
Without loss
of generality we may assume that $x$ is positive and normalized in $\ell_1$. Moreover, for each $k<n$, the point $(0,\dots,0,x)$ (with $k$ zeroes preceding $x$) belongs to $X^{(k+1)}_{1/2}$ and the induction hypothesis yields that $x\in\ell(f_{k-1})$, with
$$
\|\Phi_k(x)\|_1\lesssim \|x\|_{\ell(f_k)}\sim \|(0,\dots, 0, x)\|_{X_{1/2}^{(k+1)}}.
$$
To prove that $x\in\ell(f_n)$ it suffices to see that $\Phi_n(x)\in \ell_1$. One has
%\begin{align*}
  $$\|(0,\dots, 0, x)\|_{X_{1/2}^{(n+1)}} \sim \overbrace{\|(\Phi_n x, \Phi_{n-1}x,\dots, \Phi_2x,\Phi_1x)\|_{X^{(n)}_{1/2}}}^{(\star)}+ \|x\|_1
  .
  $$
  %\\
  %&\sim
   %\|(\Phi_n x - \Phi_{n-1}\Phi_1x, \Phi_{n-1}x - \Phi_{n-2}\Phi_1x,\dots, \Phi_3x-\Phi_2\Phi_1x, \Phi_2x-\Phi_1\Phi_1x)\|_{X^{n-1}_{1/2}}+ \|\Phi_1x\|_1+\|x\|_1
%\end{align*}
Unrolling the first summand we obtain
$$
(\star) \sim  \|(\Phi_n x - \Phi_{n-1}\Phi_1x, \Phi_{n-1}x - \Phi_{n-2}\Phi_1x,\dots, \Phi_3x-\Phi_2\Phi_1x, \Phi_2x-\Phi_1\Phi_1x)\|_{X^{(n-1)}_{1/2}}+ \|\Phi_1x\|_1.
$$
One may continue beating the same drum, but after a few steps the formulas   become really messy --- and the term ``unrolling'' a bit ironic.

Thus, rather than looking for an intractable closed-form expression, we will give a recurrence relation simple enough to capture the essence of the process.

The procedure is as follows:

\begin{itemize}
    \item Start with $(y_n^{(1)},\dots,y_1^{(1)})= (\Phi_n x,\dots,\Phi_1x)$.

    % \item Start with $(y_n^{(1)},y_{n-1}^{(1)},\dots, y_2^{(1)},y_1^{(1)})= (\Phi_n x, \Phi_{n-1}x,\dots, \Phi_2x,\Phi_1x)$.

    \item Having defined $(y_n^{(k)},\dots, y_{k+1}^{(k)},y_k^{(k)})$, set
    $$(y_n^{(k+1)},\dots, y_{k+1}^{(k+1)})=
    (y_n^{(k)}-\Phi_{n-k} y_k^{(k)},\dots, y_{k+1}^{(k)}-\Phi_1y_k^{(k)}) .
    $$
    \item After $n-1$ steps one arrives at a certain vector $y_n^{(n)}$ in $\ell_1$.
\end{itemize}

Of course one has %(with a slightly abuse of notations)
\begin{align*}
     \|(0,\dots, 0, x)\|_{X_{1/2}^{(n+1)}} &\sim \|(y_n^{(1)},\dots,y_1^{(1)})\|_{X_{1/2}^{(n)}}+ \|x\|_1\\
     &\sim \|(y_n^{(2)},\dots,y_2^{(2)})\|_{X_{1/2}^{(n-1)}}+\|y_1^{(1)}\|_1  + \|x\|_1\\
     & \sim\dots\\
     &\sim \|(y_n^{(k)},\dots,y_k^{(k)})\|_{X_{1/2}^{(n-k+1)}}+\|y_{k-1}^{(k)}\|_1 +\dots +\|y_1^{(1)}\|_1  + \|x\|_1\\
     & \sim\dots\\
     &\sim \|(y_n^{(n-1)},y_{n-1}^{(n-1)})\|_{X_{1/2}^{(2)}}+\|y_{n-1}^{(n-2)}\|_1 +\dots +\|y_1^{(1)}\|_1  + \|x\|_1\\
     &\sim \|(y_n^{(n)}\|_1 +\|y_{n-1}^{(n-1)})\|_1+\|y_{n-1}^{(n-2)}\|_1 +\dots +\|y_1^{(1)}\|_1  + \|x\|_1.
\end{align*}
 The implicit constants depend only on $n$, but we don't need this fact; we only need to know that $\|y_n^{(n)}\|_1$ is finite, from which we will deduce that $\|y_n^{(1)}\|_1$ is finite thus concluding that $x$ belongs to $\ell(f_n)$. To see that we will prove that the output $y_n^{(n)}$ essentially dominates the starting vector $y_n^{(1)}=\Phi_n(x)$, and to prove it, we need to know how all the vectors evolve throughout the inductive process described above. The following piece ends the proof:

\medskip

\noindent{\bf Claim}\, For every $1\leq k\leq n$ and every $0<\varepsilon<1$ one has % there exist   constants $0<d_k<1<D_k$ such that
\begin{equation}\label{eq:dyD}
(1-\varepsilon) (|y_n^{(1)}|, \dots, |y_k^{(1)}|)(i)\leq (|y_n^{(k)}|, \dots, |y_k^{(k)}|)(i)\leq (1+\varepsilon)  (|y_n^{(1)}|, \dots, |y_k^{(1)}|)(i)
\end{equation}
for all but finitely many $i\in \mathbb N$.
\medskip

 We prove this statement by induction on $k$. For $k=1$ it is trivially true. The inductive step is as follows. Fix $\varepsilon < \tfrac{1}{2}$. Put $b=\|y_k^{(k)}\|_1$ and pick some $i$ for which \eqref{eq:dyD} holds so that for $k+1\leq m\leq n$ one has
 \begin{equation*}\label{eq:dyD*}
(1-\varepsilon)  |y_m^{(1)}(i) |\leq |y_m^{(k)}(i )| \leq (1+\varepsilon) | y_m^{(1)}(i)|.
\end{equation*}
Now, since $y_m^{(k+1)}=  y_m^{(k)}-\Phi_{m-k} y_k^{(k)}$ %, % for any $0<d_{k+1}<d_k$% and those indices $i$
, we have (the ``variable'' $i$ is omitted):
\begin{align*}
|y_m^{(k+1)}|
&\geq \left|y_m^{(k)}\right|-\left|\Phi_{m-k} y_k^{(k)}\right|\\
& \geq (1-\varepsilon)  |y_m^{(1)}|- \frac{|y_k^{(k)}|}{(m-k)!}\log^{m-k}\frac{b}{|y_k^{(k)}|}\\
&= (1-\varepsilon)  \frac{x}{m!}\log^{m}\frac{1}{x} -
\frac{|y_k^{(k)}|}{(m-k)!}\log^{m-k}\frac{b}{|y_k^{(k)}|}\\
&\geq (1-\varepsilon)  \frac{x}{m!}\log^{m}\frac{1}{x} -
\frac{ |y_k^{(1)}|}{(m-k)!} \log^{m-k}\frac{b}{ |y_k^{(1)}|}\\
&= (1-\varepsilon)  \frac{x}{m!}\log^{m}\frac{1}{x} -
\frac{ \frac{x}{k!}\log^{k}\frac{1}{x}}{(m-k)!} \log^{m-k}\frac{b}{ \frac{x}{k!}\log^{k}\frac{1}{x}}\\
&\geq (1-2\varepsilon)  \frac{x}{m!}\log^{m}\frac{1}{x}
\end{align*}
 for all but finitely many indices; see \eqref{eq:limfifj}.

The other inequality is easier: actually $|y_m^{(k+1)}(i )|\leq (1+\varepsilon) |y_m^{(k)}(i)|$ for $i$ large enough.
This ends the proof. Of course, the smaller $\varepsilon$
is, the more exceptional indices there will be. \end{proof}

For an arbitrary index set $I$, apply the same argument to the
couple $(\ell_{2/3}(I),\ell_2(I))$. Every vector involved has countable
support, and the proof reduces verbatim to the countable case. For real
spaces, just observe that complex spaces are real (aren't they?) and take the subspace of real families of $\ell(I, f_k)$.

\begin{remark*}
Regarding \eqref{eq:dyD}, it is worth noting that a pointwise inequality such as $(|x_n|,\dots, |x_1|)\leq
(|y_n|,\dots, |y_1|)$ does not imply any relation between the quasinorms of  $(x_n,\dots, x_1)$ and $(y_n,\dots, y_1)$ in $X_\theta^{(n)}$ unless $n=1$.
\end{remark*}

\section{Miscellaneous observations and counterexamples}

%In this section we will relax our notation by just calling Serre space to an element of the Serre closure of the class of Banach spaces.

In this section we will relax our notation (and ourselves) by simply calling an element of the Serre closure of the class of Banach spaces a Serre space.

\subsection{Serre spaces that are not iterated extensions of Banach spaces} A quasi-Banach space with trivial dual cannot be obtained as an iterated extension of Banach spaces; see you why? Actually, if $E$ has trivial dual and $Z$ is an iterated extension of Banach spaces then $L(E,Z)=0$,  The announced example is a quotient of  the continuous version of the space $X^{(2)}_{1/2}$ and could be constructed by interpolating the Lebesgue spaces $L_{2/3}$ and $L_2$ at $\theta=\tfrac{1}{2}$. Here, for $0<p<\infty$, we denote by $L_p$ the space of measurable functions $f:[0,1]\to \mathbb K$ for which the quasinorm $\|f\|_p=\left(\int_0^1|f(t)|^pdt\right)^{1/p}$ is finite. This time the ambient space is $L_{2/3}$ which is far from being locally convex and some complications arise. We rather prefer to construct a self-extension of $L_1$ directly as follows:
\begin{itemize}
    \item We fix any $p\in(0,1)$ (take $p=2/3$, now you are accustomed to it) and we consider $L_{2/3}$ as our containing space. The map $\Omega: L_1\to L_p$ given by
    $\Omega(f)= f\log (|f|/\|f\|_1)$ is quasilinear from $L_1$ to itself.
        \item Therefore $L_1\oplus_\Omega L_1=\{(g,f)\in L_p\times L_1: g-\Omega f\in L_1\}$ is a self-extension of $L_1$ with quasinorm $\|(g,f)\|_\Omega=\|g-\Omega f\|_1+\|f\|_1$. This is a kind of Kalton--Peck space; see \cite{kalton-peck} for the original construction (sequence spaces) and \cite{kalton-memoir} for its extension to function spaces.
        % \cite[Sections~10.8--10.9]{HMBST} for the quasilinear-map viewpoint.
    \end{itemize}

Let $D=\{f\in L_1: \Omega f\in L_1\}$. This space, usually called the domain of $\Omega$, is equipped with the quasinorm $\|f\|_D= \|\Omega f\|_1+\|f\|_1$, so that it is isometric to the subspace $\{(0,f)\in L_1\oplus_\Omega L_1\}$ of  $L_1\oplus_\Omega L_1$. The domain $D$ agrees, as a set, with the Orlicz function space associated with the convex function $t\mapsto t\log(e+t)$, and $\|\cdot\|_D$ is
equivalent to its Luxemburg norm, although we don't need this fact. This space has a considerable pedigree and is often called $L\log L$; see \cite[Section 6.3]{garling}.

%Indeed, take $f\ne0$, put
%$s=\|f\|_1$, $u=|f|/s$, and
%$$
%A=1+\int_0^1u\log_+u.
%$$
%Since $\int_{\{u\leq1\}}u|\log u|\leq1/e$, we have
%$$
%\|f\|_D=s\left(1+\int_0^1u|\log u|\right)\asymp sA.
%$$
%If $C$ is a sufficiently large absolute constant, then
%$$
%\int_0^1\Psi\left(\frac{u}{CA}\right)\leq 1,
%$$
%b%ecause $\log(e+v)\lesssim1+\log_+v$. Conversely, if
%$\int\Psi(u/t)\leq1$, then $t\geq1$ and
%\begin{align*}
%\int u\log_+u
%&\leq\log t+t\int\frac{u}{t}\log_+\frac{u}{t}\\
%&\leq\log t+t\leq2t.
%\end{align*}
%This proves the norm equivalence. In particular, $D$ is Banach.

\begin{example} The Serre space $R=(L_1\oplus_\Omega L_1)/D$ has trivial dual. In particular $R$ does not embed into an iterated extension of Banach spaces.\end{example}
\begin{proof}% The Serre assertion follows from being $D$ Banach and $L_1\oplus_\Omega L_1$ an extension of two Banach spaces.
We can identify $(L_1\oplus_\Omega L_1)/D$ with the so-called range space of $\Omega$, which consists of those functions $g$ such that $(g,f)\in L_1\oplus_\Omega L_1$ for some $f\in L_1$, quasinormed by
$\|g\|_R=\inf_{f\in L_1}\|g-\Omega f\|_1+\|f\|_1$.

Form the an exact sequence
$$
\xymatrix{
0\ar[r]&D \ar[r]^-\imath & L_1 \oplus_\Omega L_1 \ar[r]^-\pi& R\ar[r]&0}
$$
where $\imath(f)=(0,f)$ and $\pi(g,f)= g$. Given a measurable $A\subset[0,1]$ we denote the characteristic function of $A$ by $1_A$. Note that $\Omega(1_A)= 1_A\log(1/|A|)$, where $|A|$ is the Lebesgue measure of $A$. This can be used to estimate the quasinorm of these functions in $R$ as follows. For $0<|A|<1$ one has
$$
\|1_A\|_R=\inf_{f}\|1_A-\Omega f\|_1+\|f\|_1\leq
\inf_{\lambda}\|1_A-\Omega (\lambda 1_A)\|_1+\|\lambda 1_A\|_1.
$$
But since
$$
\|1_A-\Omega (\lambda 1_A)\|_1+\|\lambda 1_A\|_1= |A|\left(\left|1-\lambda\log\frac{1}{|A|}\right|+|\lambda|\right),
$$
taking $\lambda=1/\log(1/|A|)$ we obtain
$$
\|1_A\|_R\leq\frac{|A|}{\log(1/|A|)}.
$$
%The expression is not used at $|A|=1$: in the argument below we first
%partition $A$ into pieces of measure $|A|/n$.\medskip

Let $X$ be a Banach space and $T:R\to X$ an operator. Take any $A$ and for each $n$ let $A=A_1\cup \dots \cup A_n$ be a partition of $A$ into subsets of the same measure ($|A_i|=|A|/n$ for all $1\leq i\leq n$). Since $1_A= \sum_i 1_{A_i}$, one has
\[
\|T(1_A)\|_X
\leq \sum_i \|T(1_{A_i})\|_X
 \leq \|T\| \sum_{i\leq n} \|1_{A_i}\|_R\leq \|T\| \sum_{i\leq n} \frac{|A_i|}{\log(1/|A_i|)}=\|T\| \frac{|A|}{\log(n/|A|)}.
\]
%\begin{align*}
%\%|T(1_A)\|_X
%&\leq \sum_i \|T(1_{A_i})\|_X
% \leq \|T\| \sum_{i\leq n} \|1_{A_i}\|_R\\
%&\leq \|T\| \sum_{i\leq n} \frac{|A_i|}{\log(1/|A_i|)}\\
%&=\|T\| \frac{|A|}{\log(n/|A|)}.
%\end{align*}
Letting $n\to\infty$ we conclude that $T(1_A)=0$. Since it is relatively easy to check that these characteristic functions generate a dense subspace of $R$, we are done.\end{proof}

A few remarkable facts about this example:
\begin{itemize}
\item
Taking $A=[0,1]$ one sees that $c(n;R) \gtrsim \log n $ and since $R$ is a quotient of an extension of two Banach spaces one actually has $c(n;R)\sim \log n$.
\item

The space $R$ agrees with $L\log^{-1}L$, which is the  Orlicz function space associated to $t\mapsto t/\log(e+t)$.
\end{itemize}

Regarding the dual situation, we also have a radical counterexample:

\begin{example}\label{ex:lf1} %Let $k\geq 1$ an integer.
The space $\ell(f_1)$ is not a quotient of any iterated extension of Banach spaces.
\end{example}

The same can be said of $\ell(f_k)$ for any $k>0$ and actually we will show that if $E$ is an iterated extension of Banach spaces, then all operators $E\to \ell (f_k)$, with $k>0$, are compact. We have organized the proof avoiding all unnecessary theory. We consider $\ell(f_k)$ with the quasinorm provided by the first item of Corollary~\ref{cor:lorentz}. Clearly, the formal inclusion $J:\ell(f_k)\to \ell_1$ is bounded (by 1). This operator represents the so-called Banach envelope of $\ell(f_k)$ which means for every operator $u: \ell(f_k)\to X$ with $X$ Banach there is a unique operator $\tilde{u}:\ell_1\to X$ such that $u=\tilde{u}J$:
$$
\xymatrix{\ell(f_k) \ar[rd]_J \ar[rr]^u & & X\\
& \ell_1 \ar[ru]_{\tilde{u}}
}
$$
That is, $u$ extends to $\ell_1$. Ah, but we had promised not to theorize! Ok, the crucial point is:

\begin{lemma}\label{lem:compactness}
Let $Y \to Z  \to B$
be a short exact sequence of quasi-Banach spaces, with $B$ Banach and let $T:Z\to \ell (f_k)$ be an operator, $k>0$. If $JT|_Y$ is compact,
then $JT$ is compact.\end{lemma}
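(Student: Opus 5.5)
The plan is to argue by contradiction. We combine a gliding hump in $\ell_1$ with the quasilinear description of $Z$ given by Lemma~\ref{lem:Z=YFX}, and we exploit the gap between the two quasinorms in Corollary~\ref{cor:lorentz}: vectors of $\ell(f_k)$ can be normed by $\sum_i|u_i|(1+\log^k(\|u\|_1/|u_i|))$. Write $Z=Y\oplus_F B$ with $F:B\to Y$ quasilinear. Suppose $JT$ is not compact. Then there is a bounded sequence $z_n=(w_n,b_n)$ in $Z$ whose images $JTz_n$ have no $\ell_1$-convergent subsequence.

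Step one is the gliding hump. Pass to a subsequence for which $JTz_n$ converges coordinatewise, which is possible because coordinates are bounded. Replace $z_n$ by differences $z_{2n+1}-z_{2n}$; these are still bounded and have coordinatewise null images whose $\ell_1$ norms stay $\ge\delta>0$. After a further subsequence and a small $\ell_1$ perturbation, the vectors $JTz_n$ are essentially disjointly supported blocks $v_n$ with $\|v_n\|_1\approx\delta$. We may also push their supports beyond any prescribed index $m$, by discarding the first terms. Fix $N$ and consider $\Sigma_N=\sum_{n\le N}z_n$. Set $b=\sum_{n\le N}b_n$ and $s_N=(F(b),b)\in Z$, so that $\|s_N\|_F=\|b\|\lesssim N$ because $B$ is Banach. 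Then $y_N=\Sigma_N-s_N\in Y$, and Lemma~\ref{lem:c and d}(a) gives $\|y_N\|\lesssim N\,c(2;Y)\big(c(N;Y)+d(N;F)\big)=:N\,D(N)$. Only the finiteness of $D(N)$ matters, not its growth.

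Step two uses the compactness of $JT|_Y$. The set $K=JT(\text{ball of }Y)$ is compact in $\ell_1$, so for each $\varepsilon>0$ there is an index $m_\varepsilon$ with tail mass $\sum_{i>m_\varepsilon}|u_i|\le\varepsilon$ for every $u\in K$. With $N$ fixed, take $\varepsilon=\eta/D(N)$ and choose the $N$ blocks supported beyond $m_\varepsilon$. Let $P$ be the projection onto coordinates $>m_\varepsilon$. Then $\|PJTy_N\|_1\lesssim\eta N$, and hence $PJTs_N$ lies within $\ell_1$-distance $\lesssim\eta N$ of $v=\sum_{n\le N}v_n$. The vector $u=PTs_N$ satisfies $\|u\|_{\ell(f_k)}\le\|T\|\,\|s_N\|\lesssim N$, since $P$ is bounded on $\ell(f_k)$ by unconditionality.

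Step three is the elementary obstruction, which I expect to be the heart of the matter. A vector $u$ with $\|u/N\|_{\ell(f_k)}\le C$ cannot lie within $\ell_1$-distance $\eta N$ of $v$ when $\eta$ is small and $N$ is large. Indeed, $v/N$ has $\ell_1$ mass about $\delta$, and all of its entries are $\le t:=\delta/N$, since each block has mass $\le\delta$. Split the coordinates according to whether $|u_i|/N>2t$ or not.
\begin{itemize}
\item[] On the first set, $|u_i-v_i|\ge|u_i|/2\ge|v_i|$.
\item[] On the second set, the finiteness of $\sum_i(|u_i|/N)\log^k(N/|u_i|)$, up to the harmless normalisation by $\|u\|_1\lesssim N$, forces the mass $\sum|u_i|/N$ there to be at most $C'/\log^k(1/2t)$.
\end{itemize}
Summing the two sets gives $\|u-v\|_1/N\ge\delta-C'/\log^k(N/2\delta)$, which tends to $\delta$ as $N\to\infty$. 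This contradicts $\|u-v\|_1\lesssim\eta N$ once $\eta\ll\delta$ and $N$ is large. Here $k>0$ is used essentially.

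The main obstacle is the bookkeeping of constants. The order of choices must be: $\eta$ first, then $N$, then $\varepsilon=\eta/D(N)$ and $m_\varepsilon$, and only then the blocks. We must also check that the constants in the gliding-hump approximation, in $\|s_N\|\lesssim N$ and in $\|PTs_N\|\lesssim N$ do not depend on $N$ or on the location of the blocks. They do not, because they only involve $\sup_n\|z_n\|$, $\|T\|$ and the (uniform) $\ell_1$-approximation errors, which can be made summable along the sequence.
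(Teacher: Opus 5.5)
Your proof is correct and is essentially the paper's argument. Both run a gliding hump, then lift the quotient parts (you sum $N$ blocks, the paper averages $m$ of them) with size controlled because $B$ is normed; you use the section $b\mapsto(F(b),b)$ where the paper uses openness of $q$. Both then use compactness of $JT|_Y$ to kill the $Y$-component on far-out coordinates, and finally the logarithmic penalty that $\ell(f_k)$ charges to $\ell_1$-spread vectors, your Step three being a perturbation-robust version of \eqref{eq:small-coordinates}. The only slip is cosmetic: the blocks satisfy $\|v_n\|_1\geq\delta$ rather than $\leq\delta$, so take $t=A/N$ with $A=\sup_n\|JTz_n\|_1$ (or rescale the $z_n$), and note that $\|u\|_1\geq(\delta-O(\eta))N$, which is what makes the normalisation by $\|u\|_1$ in Step three harmless.
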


Is the just stated lemma the tip of the iceberg of a general result? It is very tempting to think that; however, as we shall shortly see, the proof relies on very specific properties of the space $\ell (f_k)$. Note, in particular, that for $x\in\ell (f_k)$ one has
\begin{equation}\label{eq:small-coordinates}
\|x\| \geq
\|x\|_1\left(
1+\log^k \frac{\|x\|_1}{\|x\|_\infty}
\right).
\end{equation}
Consequently, if $(x_n)_{n\geq 1 }$ is bounded in $\ell( f_k)$, then $\|x_n\|_\infty \to 0 \iff \|x_n\|_1 \to 0$. Keep this in mind.

As for $\ell_1$ what is needed for the proof is the following compactness criterion, probably due to M. Fr\'echet \cite[pp. 307--308]{frechet}: a bounded subset $K\subset\ell_1$ is relatively compact in the norm topology if and only if for every $\varepsilon>0$ there is a finite set of indices $F$ such that $\|(1-1_F) f\|_1\leq \varepsilon$ for every $f\in K$; see  \cite[Chapter I, ex.~6]{diestel}.

\begin{proof} Clearly, we may assume that $Y=\Ker q$, where  $q:Z\to B$ is the quotient map. Suppose that $JT$ is not compact.
Then there is a bounded sequence $(w_n)$ in $Z$ such that $\|Tw_n-Tw_k\|>1$ for $n\neq k$. Passing to a subsequence we may assume that $(Tw_n)$ is coordinatewise convergent. Letting $z_n=w_{2n}-w_{2n+1}$ we get a bounded sequence whose image under $T$ is coordinatewise null. Now, given any sequence of strictly positive scalars $(\varepsilon_n)$, we may choose a subsequence of $(z_n)$ that we don't relabel and a disjoint sequence of finitely supported $u_n\in \ell (f_1)$ such that $\|u_n-Tz_n\|<\varepsilon_n$, with $u_n=Tz_n$ on $F_n=\supp u_n$. In what follows, we proceed with the proof assuming that $u_n=Tz_n$. The necessary adjustments, including  the choice of $(\varepsilon_n)$, for the general case are left to the reader, but note that $\varepsilon_n=2^{-n}$ works. In any case we may assume that $\|z_n\|_E\geq \|u_n\|_1=1$ for every $n$. Fix an integer $m$ and put
\begin{equation*}
\overline{z}_j=\frac1m\sum_{(j-1)m< n\leq  jm}z_n,
\qquad
\overline{u}_j=\frac1m\sum_{(j-1)m< n\leq  jm}u_n.
\end{equation*}
Note that $\|\overline{u}_j\|_1=1$, but $\|\overline{u}_j\|_\infty\leq 1/m$, so $\|\overline{u}_j\|_E\geq \log m$ and $\|\overline{z}_j\|_Z\geq \|T\|^{-1} \log m$. Now the hypothesis that $B$ is Banach enters: we have $\|z_n\|_Z\leq M$ for some $M$ and all $n$, hence by the convexity of the norm
$$
\left\| q \overline{z}_j \right\|_B \leq M \|q\|;
$$
this bound is independent of $m$. Since $q$ is open there exists $\tilde{z}_j\in Z$ such that $q\tilde{z}_j=\overline{z}_j$ and %in such a way that
\begin{equation}\label{eq:Mprima}
\|\tilde{z}_j\|_Z\leq M'
\end{equation}
for all $j$, where $M'$ depends only on $M$ and $q$, but not on $m$. Consider now the sequence $y_j=\overline{z}_j-\tilde{z}_j$, which is bounded in $Z$ and contained in $Y$ --- we typically expect $\|y_j\|$ to be of size $\log m$ or so because of the summand $\overline{z}_j$. By the hypothesis on  $JT|_Y$, the points $JTy_j$ form  relatively compact subset of $\ell_1$. Let $\overline{F}_j= \supp \overline{u}_j$. These form a disjoint family and are all finite. Hence, by the compactness criterion,
$$
\| 1_{\overline{F}_j}Ty_j\|_1 = \| 1_{\overline{F}_j}T(\overline{z}_j-\tilde{z}_j)\|_1\To 0.
$$
But $\|\overline{u}_j - 1_{\overline{F}_j}T \overline{z}_j\|_1\leq  \|\overline{u}_j - T \overline{z}_j\|_1 \to 0
$ and, therefore,
$$
 \|\overline{u}_j - 1_{\overline{F}_j} T \tilde{z}_j\|_1 \To 0.
 $$

Look at the vector $1_{\overline{F}_j} T \tilde{z}_j$ for large $j$:
\begin{itemize}
\item  $\|1_{\overline{F}_j} T \tilde{z}_j\|_1 \sim \|\overline{u}_j \|_1$ is roughly 1.
\item  $\|1_{\overline{F}_j} T \tilde{z}_j\|_\infty \sim \|\overline{u}_j \|_\infty$ is at most $1/m$.
\item Hence $\|T\|\| \tilde{z}_j\| \geq \|1_{\overline{F}_j} T \tilde{z}_j\|$ is at least $\log m$, see \eqref{eq:small-coordinates}.
\end{itemize}
This is incompatible with the boundedness of the starting sequence $(z_n)$ in view of \eqref{eq:Mprima}.
\end{proof}

\begin{proof}[Verification of Example~\ref{ex:lf1}]
It is clear from Lemma~\ref{lem:compactness} that if $B$ is a Banach space and $T:B\to \ell (f_1)$ is bounded, then $JT: B\to \ell (f_1)\to \ell_1$ is compact --- just take $Z=0\oplus B$.

Let $Z$ be an iterated extension of Banach spaces. Then there are quasi-Banach spaces $0=Z_0\subset Z_1\subset\cdots\subset Z_r=Z$ such that $Z_{i}/Z_{i-1}$ is (isomorphic to) a Banach space for $0< i\le r$. Let $T:Z\to \ell (f_1)$ be  a bounded operator.
Applying Lemma~\ref{lem:compactness} to the exact sequences
$Z_{i-i}\to Z_{i}
\to Z_{i}/Z_{i-1}\to 0
$ shows successively that the restriction of $JT$ to each $Z_i$ is
compact. Thus $JT:Z\to \ell (f_1)\to \ell_1$ is compact and $T$ cannot be onto because $J: \ell (f_1)\to \ell_1$ is not compact. \end{proof}

Returning to Kanda's paper \cite{kanda}, we see that the containments $$\mathscr{Q}\mathscr{E}({\bf D}, {\bf E})\subset \mathscr{E}(\mathscr{Q}{\bf D}, \mathscr{Q}{\bf D}),\qquad \mathscr{S}\mathscr{E}({\bf D}, {\bf E})\subset \mathscr{E}(\mathscr{S}{\bf D}, \mathscr{S}{\bf E})$$ which hold in Abelian categories (\cite[Proposition 2.3, parts (4) and (5)]{kanda}), fail in the quasi-Abelian category of quasi-Banach spaces for ${\bf D}= {\bf E}= {\bf B}$, the class of Banach spaces. %Of course, the ``dual'' statement  $\mathscr{S}\mathscr{E}({\bf D}, {\bf E})\subset \mathscr{E}(\mathscr{S}{\bf D}, \mathscr{S}{\bf E})$ fails in the opposite category.
%To sum up: while the first three parts of \cite[Proposition 2.3]{kanda} remain valid in quasi-Abelian categories, all the other parts fail either in $\bf Q$ or in its opposite.

%\end{itemize}

\subsection{$p$-convex for all $0<p<1$ does not imply Serre}
In view of Lemma~\ref{lem:log->p}, it is natural to ask whether there exist spaces that are $p$-normable for every $p\in(0,1)$
but fail to be Serre. It is easy to construct one as follows: given a strictly increasing sequence $p_n$ converging to $1$, let $E_n$ be the space $\ell (f_n)$ equipped with an equivalent $p_n$-norm, which exists by the aforementioned lemma.
The space $\ell_1(\mathbb N, E_n)$, which consists in all families $(x_n)_{n}$ with $x_n\in E_n$ such that
$\sum_n \|x_n\|_{E_n}<\infty$, with the obvious quasinorm, is  $p$-normable for every $p\in(0,1)$
but fails to be Serre. The catch-22 here is that we have no control over the equivalence constants for the new quasinorms... and ultimately, it could happen that $\ell_1(\mathbb N, E_n)$
does not coincide with $\ell_1(\mathbb N, \ell f_n)$!

A more explicit example can be described as follows.
Let $p(n)$ be an increasing sequence in $(0,1)$, with $p(n)\to 1$ and let $k(n)$ be a strictly increasing sequence in $\mathbb N$. Put
\begin{equation}
   E= %E(p(\bullet),k(\bullet))
    \left\{x\in \mathbb R^{\mathbb N}:  \sum_{n\geq 1}  \left(\sum_{k(n)\leq i<k(n+1)} |x(i)|^{p(n)} \right)^{\frac{1}{p(n)}} < \infty \right\},
\end{equation}
with the obvious quasinorm. Note that $E$ is just an $\ell_1$-sum of finite-dimensional $\ell_p$-spaces. This space is $q$-convex for every $0<q<1$ because, once $q$ is fixed, we can split it as a direct sum of a finite-dimensional subspace (the sequences supported up to $\sum_{p(n)\leq q} k(n)$) and a $q$-normed subspace. Take $k(n)=2^n$ and let $a(n)\to\infty$ be any sequence such that $(a(n)/n)\log n\to 0$, for instance $a(n)=n^\alpha$ with $0<\alpha<1$. Then, letting
$$
\frac{1}{p(n)}= 1+ \frac{a(n)\log_2 n}{n},
$$
one easily sees that the sum of the $2^n$ unit vectors in the $n$-th chunk already yields $c(2^n;E)\geq n^{a(n)}$, while in a Serre space one must have $c(2^n)\leq n^k$ for some fixed $k$ and $n$ large.\medskip

%Talking about of spaces that are $p$-convex for every $p\in(0,1)$:

\subsection{On $p$-Banach spaces}
Characterizing $p$-convex spaces by the behaviour of the sequence $c(n;X)$
seems to be a rather challenging open problem, perhaps not very interesting. Observe that if $X$ carries a $p$-norm, one has
\begin{equation*}
    \left\|\sum_{i\leq n} x_i\right\|\leq \left(\sum_{i\leq n} \|x_i\|^p\right)^{1/p}\leq n^{1/p - 1} \left(\sum_{i\leq n} \|x_i\|\right),
\end{equation*}
so that $c(n;X)\leq n^{1/p - 1}$ for all $n$. Curiously enough, if $c(2;X)\leq 2^{1/p - 1}$ then $X$ can be given an equivalent $p$-norm, by the traditional proofs of the Aoki--Rolewicz theorem. Of course, if $X$ is has an equivalent $p$-norm, then $c(n;X)\lesssim n^{1/p - 1}$
and the relevant question is whether the converse is true (which is equivalent to a problem of Peck \cite[Remark 1 on p. 133]{peck}).
On the other hand, in his earliest study on the three-space problem, Kalton introduced the sequence
$$
a(n;X)=\sup_{\|x_i\|\leq 1}  \left\|\sum_{i\leq n} x_i\right\|,
$$
proved that in a $p$-convex space one always has  $a(n;X)\lesssim n^{1/p}$, and asked whether the converse is true; see \cite{kalton-locally}, Proposition 2.1 and the ensuing paragraph, as well as the remark following his Proposition 2.2. Clearly, $a(n;X)\leq n\,c(n;X)$, so $c(n;X)\lesssim n^{1/p - 1}$ implies $a(n;X)\lesssim n^{1/p}$ and, thus, an affirmative answer to Kalton's question would have made us all happy.\smallskip

 %would solve Problem~\ref{prob:cn->p?} as well.

When we typed this prompt into an AI interface (namely, GPT-6 Astra), the algorithm spat out that a quasi-Banach space $X$ has $c(n; X)\lesssim n^{1/p - 1}$ if and only if $a(n;X)\lesssim n^{1/p}$ and that this does not imply $p$-convexity, the counter-example being a suitable modification of a celebrated construction of Tzafriri \cite{tz}, which in turn is a suitable modification of Tsirelson space.

At this moment, we know of no characterization of $p$-convexity in terms of the behaviour of $c(n;X)$, which is not so surprising because  $c(n;X)$ is designed to detect (deviation from) local convexity; see the next section for the correct constants.
\medskip

An important result by Kalton states that every extension of two $p$-Banach spaces, $p\in(0,1]$, is $q$-convex for every $0<q<p$. Therefore, ${\bf B}(p^-)$, the class of those quasi-Banach spaces that admit an equivalent $q$-norm for every $q<p$, is a Serre class. One has:

\begin{proposition}[Kalton] Let $X$ be a quasi-Banach space and $0<q<1$. The following conditions are equivalent:
\begin{itemize}
    \item[{(a)}] $X$ is $p$-convex for some $p>q$.
     \item[{(b)}] $c(n;X)/n^{1/q -1}\to 0$ as $n\to\infty$. % for every  $\gamma>0$;
      \item[{(c)}] $a(n;X)/n^{1/q}\to 0$ as $n\to\infty$.
\end{itemize}
\end{proposition}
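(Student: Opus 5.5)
The plan is to prove (a)$\Rightarrow$(b)$\Rightarrow$(c)$\Rightarrow$(a). The first two implications are soft; the real work is in the last one.

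\emph{(a)$\Rightarrow$(b).} If $X$ is $p$-convex with $p>q$, fix an equivalent $p$-norm. The computation displayed just before the statement, which combines the $p$-triangle inequality with H\"older, gives $c(n;X)\lesssim n^{1/p-1}$. Since $1/p-1<1/q-1$, it follows that $c(n;X)/n^{1/q-1}\lesssim n^{1/p-1/q}\to0$.

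\emph{(b)$\Rightarrow$(c).} This follows at once from the inequality $a(n;X)\leq n\,c(n;X)$ noted in the text. Dividing by $n^{1/q}$ gives $a(n;X)/n^{1/q}\leq c(n;X)/n^{1/q-1}\to 0$.

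\emph{(c)$\Rightarrow$(a).} This is Kalton's part, and the strategy is to turn the qualitative $o(n^{1/q})$ into a quantitative power gain.
\begin{itemize}
\item First I would show that $a(n;X)$ is submultiplicative, $a(nm;X)\leq a(n;X)a(m;X)$. To see this, group $nm$ vectors into $n$ blocks of $m$; each block sum has norm at most $a(m)$, and rescaling then bounds the total.
\item Next, choose $N$ with $a(N;X)\leq \tfrac12 N^{1/q}$. Iterating submultiplicativity gives $a(N^j)\leq 2^{-j}N^{j/q}=(N^j)^{1/r}$, where $1/r=1/q-\log 2/\log N$, so $r>q$. Monotonicity of $a(n)$ then yields $a(n;X)\lesssim n^{1/r}$ for all $n$.
\item The remaining step is to pass from the equal-size estimate $a(n)\lesssim n^{1/r}$ to $p$-convexity for any $p$ with $q<p<r$. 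Take $x_1,\dots,x_n$ with $\sum\|x_i\|^p\leq1$. Group them dyadically by size, putting into $G_j$ those with $2^{-j-1}<\|x_i\|\leq 2^{-j}$. Then $|G_j|\leq 2^{(j+1)p}$, so
\[
\Big\|\sum_{G_j}x_i\Big\|\leq 2^{-j}a(|G_j|)\lesssim 2^{-j}2^{jp/r}.
\]
This is geometrically decreasing because $p<r$.
\item Finally, sum over $j$ using an $s$-norm equivalent to the quasinorm (Aoki--Rolewicz). The geometric decay makes the series converge, which gives $\|\sum x_i\|\lesssim 1$, that is, $p$-convexity with $p>q$.
\end{itemize}

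I expect the main obstacle to be the grouping step. Its delicate point is that the bound on $|G_j|$ must be used together with the geometric decay in $j$ strongly enough to beat the $s$-norm summation. This is the same mechanism as in the proofs of Lemma~\ref{lem:log->p} and Lemma~\ref{lem:galb}, and I would model the computation on those.
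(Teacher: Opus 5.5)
Your proof is correct. The implications (a)$\Rightarrow$(b)$\Rightarrow$(c) are handled exactly as in the paper, via $c(n;X)\le n^{1/p-1}$ for a $p$-norm and $a(n;X)\le n\,c(n;X)$.

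The difference is in (c)$\Rightarrow$(a). The paper gives no argument there and simply cites Kalton's Proposition~2.2 in \cite{kalton-locally}, whereas you reconstruct it. Both of your ingredients are sound:
\begin{itemize}
\item Submultiplicativity $a(nm;X)\le a(n;X)\,a(m;X)$, together with monotonicity, upgrades the qualitative bound $a(n;X)=o(n^{1/q})$ to a power bound $a(n;X)\lesssim n^{1/r}$ with $r>q$. The chosen $N$ is automatically at least $2$, since $a(1;X)=1>\tfrac12$, so $\log N>0$.
\item The dyadic level sets $G_j$ satisfy $|G_j|<2^{(j+1)p}$. This gives block sums of size $\lesssim 2^{-j(1-p/r)}$, and any equivalent $s$-norm sums these without difficulty.
\end{itemize}
So the step you flagged as the main obstacle is routine. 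It is the geometric situation of Lemma~\ref{lem:log->p}, not the polynomial one of Lemma~\ref{lem:galb}, where the slower gain forces the careful choice of $r$ and H\"older.

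Your route makes the statement self-contained and gives slightly more: an explicit $r>q$ such that $X$ is $p$-convex for every $p<r$. It also shows why $p=r$ is the borderline of Kalton's question, since the decay $2^{-j(1-p/r)}$ disappears exactly there. The paper's route buys only brevity.
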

\begin{proof} The implications \textup{(a)}$\implies$\textup{(b)}$\implies$\textup{(c)} have been proved. The equivalence \textup{(a)}$\iff$\textup{(c)} is contained in \cite[Proposition~2.2]{kalton-locally}.
\end{proof}

The immediate consequence is the following characterization of ${\bf B}(1^-)$:

\begin{corollary} For a quasi-Banach space $X$ the following conditions are equivalent:
\begin{itemize}
    \item[{(a)}] $X$ is $p$-convex for every $p\in(0,1)$;
     \item[{(b)}] $c(n;X)\lesssim n^{\gamma}$ for every  $\gamma>0$;
      \item[{(c)}] $a(n;X)\lesssim n^{\alpha}$ for every  $\alpha>1$.
\end{itemize}
\end{corollary}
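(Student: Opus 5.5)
The plan is to derive the corollary from Kalton's Proposition by quantifying over $q$ (equivalently $p$), together with two elementary remarks relating the exponents.

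\emph{(a)$\Rightarrow$(b).} Fix $\gamma>0$ and choose $p\in(0,1)$ with $1/p-1<\gamma$. Since $X$ is $p$-convex, we may pass to an equivalent $p$-norm; the bound $c(n;X)\lesssim c(n;\cdot)$ survives changes to equivalent quasinorms up to a constant. The displayed estimate in the subsection on $p$-Banach spaces then gives $c(n;X)\lesssim n^{1/p-1}\le n^{\gamma}$.

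\emph{(b)$\Rightarrow$(c).} Fix $\alpha>1$. Apply (b) with $\gamma=\alpha-1$ and use $a(n;X)\le n\,c(n;X)$. This gives $a(n;X)\lesssim n^{\alpha}$.

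\emph{(c)$\Rightarrow$(a).} This is the only step that uses the nontrivial direction of Kalton's Proposition. Fix $p\in(0,1)$ and pick $q$ with $p<q<1$. Set $1/q=\alpha$, so $\alpha>1$, and choose any $\alpha'\in(1,\alpha)$. By (c), $a(n;X)\lesssim n^{\alpha'}$, hence
\[
\frac{a(n;X)}{n^{1/q}}\lesssim n^{\alpha'-\alpha}\to 0 .
\]
Kalton's Proposition, (c)$\Rightarrow$(a), applied to this $q$, shows that $X$ is $p'$-convex for some $p'>q>p$. Since $p'$-convexity implies $p$-convexity for $p<p'$ (as $\ell_{p}$-sums dominate $\ell_{p'}$-sums), $X$ is $p$-convex. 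As $p$ was arbitrary, (a) follows.

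The only point needing care is the strict inequalities. Kalton's Proposition requires a little-$o$ condition and produces a convexity exponent strictly larger than $q$. That is why we sandwich $p<q<1$ and use a slightly smaller exponent $\alpha'$ to obtain decay. Apart from this, the argument is bookkeeping.
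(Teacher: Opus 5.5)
Your proof is correct and follows the paper's argument essentially step for step: (a)$\Rightarrow$(b) via the $p$-norm estimate $c(n;X)\lesssim n^{1/p-1}$, (b)$\Rightarrow$(c) via $a(n;X)\leq n\,c(n;X)$, and (c)$\Rightarrow$(a) via Kalton's proposition with an intermediate exponent strictly between $1$ and $1/q$. The only difference is cosmetic: the paper applies Kalton's proposition directly at the target exponent $q$, getting $p'$-convexity for some $p'>q$ and hence $q$-convexity, so your extra sandwich $p<q<1$ is harmless but unnecessary.
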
 \begin{proof} Assume \textup{(a)} and fix $\gamma>0$. Choose $p<1$ so close to $1$
that $1/p-1<\gamma$. A $p$-convex quasinorm gives
$c(n;X)\lesssim n^{1/p-1}\leq n^\gamma$, which proves \textup{(b)}.
The implication \textup{(b)}$\implies$\textup{(c)} follows at once
from $a(n;X)\leq n c(n;X)$. Finally, assume \textup{(c)} and fix $0<q<1$. Choose
$1<\alpha<1/q$. Then
$$
\frac{a(n;X)}{n^{1/q}}\lesssim n^{\alpha-1/q}\longrightarrow0.
$$
Kalton's proposition yields $p$-convexity for some $p>q$, which in turn
implies $q$-convexity. Since $q<1$ was arbitrary, \textup{(a)} follows.
\end{proof}

\subsection{The Serre closure of $p$-Banach spaces, $0<p<1$}The same ideas can be used to characterize the Serre closure of $p$-Banach spaces in $\bf Q$ provided  we do not insist on doing so literally. Fix $p\in(0,1)$. Given a quasi-Banach space $X$ define for $n\geq 2$$$ c_p(n;X)=\sup_{x_i\in X}\frac{\|\sum_{i\leq n}x_i\|^p}{\sum_{i\leq n}\|x_i\|^p}.$$
Note that $X$ is $p$-convex if and only if $c_p(n;X)$ is bounded. Kalton and Peck proved in \cite{kalton-peck} that the class of $p$-convex spaces fails to be closed under extensions. One has the following extension of our main theorem:

\begin{theorem*} Let $0<p\leq 1$. For a quasi-Banach space $X$ the following are equivalent:
\begin{itemize}
\item $X$ belongs to the Serre closure of $p$-Banach spaces in $\bf Q$.
\item $c_p(n;X) \lesssim \log^k n$ for some $k\in \mathbb N$.% for every $n\geq2$.
\item $X$ is a
quotient of $\ell(I,f_l^p)$ for some set $I$ and some $l\in\mathbb N$.
\end{itemize}
\end{theorem*}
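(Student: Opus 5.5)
The plan is to run the proof of the main theorem with the $p$-power quasinorm playing the role of the quasinorm, and to transfer each of the three implications.

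\emph{First implication.} Fix an equivalent $p$-norm on each $p$-Banach ingredient and work with $\|\cdot\|^p$ throughout. Define $d_p(n;F)$ for a quasilinear $F$ as the least constant with
\[
\|\Delta F(x_1,\dots,x_n)\|^p\le d_p\sum\|x_i\|^p .
\]
Then redo Lemma~\ref{lem:c and d}. The inequalities (a)--(c) hold verbatim with $c,d$ replaced by $c_p,d_p$, because the only tool used there was the estimate $\|\sum y_i\|\le c(n)\sum\|y_i\|$, now replaced by its $p$-power version. The splitting in (c) along the dyadic tree also works, since
\[
\|x_{a0}\|^p+\|x_{a1}\|^p\le c_p(2^{j-l-1};X)\sum_b\|x_b\|^p .
\]
For $p$-Banach $Y$ one has $c_p(n;Y)=1$. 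Moreover $c_p(n;\cdot)$ is monotone under subspaces and quotients, by the same argument given for $c(n;\cdot)$. So the induction of Proposition~\ref{1implies2} gives $c_p(n;Z)\lesssim\log^{2^k-1+\varepsilon}n$ on $(\mathscr{S\hspace{-3pt}QE})^k$ of $p$-Banach spaces.

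\emph{Second implication.} The first step is the analogue of Lemma~\ref{lem:log->p}: if $c_p(n;X)\lesssim\log^kn$, then $X$ is $r$-convex for every $r<p$. Grouping dyadically as in \eqref{eq:2N-1}, the $n$-th block contributes at most
\[
n^k2^{n(1-p/r)}
\]
to the $p$-th power. This is summable, so we may use an equivalent $r$-norm. Next, the analogue of Lemma~\ref{lem:galb} (with $r$ chosen in $(p/(1+l-k),p)$) and Hölder give, for decreasing $\|x_i\|$,
\[
\Bigl\|\sum x_i\Bigr\|^p\le C\sum\|x_i\|^p(1+\log^l i).
\]
Here $\ell(I,f_l^p)$ should be read as the space of families $a$ with $(|a_i|^p)\in\ell(I,f_l)$, with quasinorm $\|(|a_i|^p)\|_{\ell(f_l)}^{1/p}$. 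With this reading the estimate says exactly that $a\mapsto\sum a_ix(i)$ is bounded on it, and the dense-subset argument of Corollary~\ref{2implies3} yields the quotient map.

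\emph{Third implication.} This should reduce to the case $p=1$ by $p$-convexification. The map $x\mapsto x^{(1/p)}$ (coordinatewise $\operatorname{sign}\cdot|x|^{1/p}$) transports $\ell_1$ to $\ell_p$ and $\ell(f_l)$ to $\ell(f_l^p)$. To use this we need interpolation to commute with it. We interpolate the couple $(\ell_{2p/3},\ell_{2p})$ at $\theta=1/2$, which gives $\ell_p$, and use the extremals
\[
F_x(z)=x^{(3/2-z)/p},
\]
whose Taylor coefficients are
\[
\frac{x}{n!}\,p^{-n}\log^n\frac{\|x\|_p}{|x|}.
\]
The derived spaces are then iterated extensions of $p$-Banach spaces. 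The copies $\{(0,\dots,0,x)\}$ are identified with $\ell(f_l^p)$ by repeating the argument of Proposition~\ref{prop:lfk is serre}, with $\|\cdot\|_1$ replaced by $\|\cdot\|_p^p$ and $\varphi_k$ applied to $|x|^p$.

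The main obstacle is this last step. The sum space $\ell_{2p}$ is no longer a Banach space when $p<1/2$, so both the quasilinear description of Lemma~\ref{lem:rochberg-twisted} and the exactness of the derived sequences require A-convexity. I would first check that the coordinatewise Hadamard three-line argument survives: $\ell_r$ spaces are lattice-convexifications of Banach lattices and carry plurisubharmonic quasinorms, so the Kalton--Mitrea framework applies. If that fails, the fallback is to apply $p$-convexification directly to the $p=1$ derived spaces $X^{(n)}_{1/2}$, checking by hand that convexification of the twisted-sum quasinorm $\|w-\Omega x\|+\|x\|$ yields extensions of $p$-Banach spaces.
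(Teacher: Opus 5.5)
Your first two implications are sound. They are the $p$-power transcription of Lemma~\ref{lem:c and d}, Proposition~\ref{1implies2} and Lemmas~\ref{lem:log->p} and~\ref{lem:galb}, which is what the paper alludes to; it gives no proof beyond saying that ``the same ideas'' work. The gap is in the third implication, where the logarithmic exponent is miscomputed.

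For $x\geq0$ with $\|x\|_p=1$, the extremal for $(\ell_{2p/3},\ell_{2p})_{1/2}=\ell_p$ is $F_x(z)=x^{3/2-z}$. Your formula $x^{(3/2-z)/p}$ is the extremal of $x^{1/p}$ when $\|x\|_1=1$, not of $x$. So $\Phi_k(x)=\frac{x}{k!}\log^k\frac1x$ and
\[
\|\Phi_k(x)\|_p^p=\frac{1}{(k!)^p}\sum_i x_i^p\log^{kp}\frac{1}{x_i}=\frac{1}{(k!\,p^k)^p}\sum_i \varphi_{kp}(y_i),\qquad y=x^p .
\]
The $p$-th power also falls on the logarithm, so it is $\varphi_{kp}$, not $\varphi_k$, that gets applied to $|x|^p$. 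The power map $a\mapsto|a|^{1/p}$ transports the interpolation spaces but not the twisting maps, because $\frac{d}{dz}F^{1/p}=\frac1pF^{1/p-1}F'$.

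Consequently, the argument of Proposition~\ref{prop:lfk is serre} identifies $\{(0,\dots,0,x)\}\subset X^{(l+1)}_{1/2}$ with $\{a:|a|^p\in\ell(f_{lp})\}$. This is the Orlicz space of $f_l(t)^p\sim t^p\log^{lp}(1/t)$. It is not the space $\{a:|a|^p\in\ell(f_l)\}$, with Orlicz function $\sim t^p\log^{l}(1/t)$, that you called $\ell(f_l^p)$ and used in (2)$\Rightarrow$(3). For $p<1$ these spaces differ, so as written your three implications do not close up.

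There are two easy repairs.
\begin{itemize}
\item Read $f_l^p$ as $(f_l)^p$. This is the paper's own convention: compare $\ell(f_k^2)$ in the last section, whose dual is said to correspond to $t^2\log^{-2k}(1/t)$. Then (3)$\Rightarrow$(1) is exactly what the derived spaces deliver. In (2)$\Rightarrow$(3) you run your galb estimate with the real exponent $lp$ in place of $l$, choosing $l>k/p$; Corollary~\ref{cor:lorentz} is valid for real exponents.
\item Keep your reading and add a bridge. For $E=\{a:|a|^p\in\ell(f_l)\}$, the pointwise inequality $|\sum_ja_j|^p\leq\sum_j|a_j|^p$ and the monotonicity of the Luxemburg quasinorm give $c_p(n;E)\lesssim c(n;\ell(f_l))\lesssim\log^l n$, the last step by Propositions~\ref{prop:lfk is serre} and~\ref{1implies2}(a). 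Your galb lemma then presents $E$ as a quotient of $\{a:|a|^p\in\ell(I,f_{mp})\}$ whenever $mp>l$, and that space is produced by the derived spaces.
\end{itemize}

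Finally, the ``main obstacle'' you single out is avoidable. The couple $(\ell_{2p/(4-p)},\ell_2)$ also gives $\ell_p$ at $\theta=\frac12$, since $\frac{4-p}{4p}+\frac14=\frac1p$, and its sum space is still $\ell_2$. The extremal of a normalized $x\geq0$ is $x^{(4-p)/2-(2-p)z}$, with Taylor coefficients $\frac{(2-p)^n}{n!}\,x\log^n\frac1x$. So the paper's setting (quasi-Banach $X_0$, Hilbert sum space) applies verbatim, and neither A-convexity nor your convexification fallback is needed.
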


\subsection{On the Serre closure of Hilbert spaces}

What about Hilbert spaces? First of all, Hilbert spaces are $B$-convex. A quasi-Banach space $X$ is called $B$-convex if
$b(n;X)/n\to 0$, where
$$
b(n;X) = \sup_{\|x_i\|\leq 1} \min_{\varepsilon_i=\pm1}\left\|\sum_i \varepsilon_i x_i\right\|.
$$
Kalton proved in \cite[Theorem 2.6]{kalton-locally} that $B$-convexity is a
three-space property and that every $B$-convex quasi-Banach space is locally
convex. Therefore, if $\bf H$ denotes the class of Hilbert spaces, then
$\langle\bf H\rangle_{\bf Q}=\langle\bf H\rangle_{\bf B}$. On the other hand, the dual of a Hilbert space is again a Hilbert space; it follows that $\langle\bf H\rangle_{\bf B}$ is a self-dual class and, actually, each  ${\bf H}^{(k)}$ is --- the notation should be obvious.

Characterising the Banach spaces in the Serre closure of $\bf H$ is surely a difficult task. A reasonable try is recalling the definition of $n$-th type 2 constant: If $X$ is a Banach space, $a_2(n,X)$ is the smallest positive number $a$ such that
$$\underset{\varepsilon_i=\pm 1} {\mathrm{ Average}}\quad  \Big\|\sum_{i=1}^n \varepsilon_i x_i\Big\|^2 \leq a^2 \sum_{i=1}^n \|x_i\|^2
$$
for all $x_1, \dots, x_n \in X$. It is shown in \cite[Lemma 4.3]{ccc} that if $X$ belongs to ${\bf H}^{(k)}$, then $a_2(n;X)\lesssim \log^{k-1} n$  and that this is optimal because for every $k>1$ there is an ($k$ times) iterated extension of $\ell_2$ with $a_2(n)\sim\log^{k-1} n$; see \cite[Lemmata 4.3 and 4.4]{ccc}. Since type constants can only decrease when passing to subspaces or quotients, $a_2(n;Z)\lesssim \log^{2^k -1}(n)$ for every $Z$ in $(\mathscr{S\hspace{-3pt}QE})^k\bf H$. On the other hand, if one fixes $1<p<2$ and then takes $q$ to be the conjugate exponent (meaning that $1=1/p+1/q$; for instance, one may take $p=3/2$, so that $q=3$), then interpolating between $\ell_p$ and $\ell_q$ by the complex method gives $X_{1/2}=\ell_2$, % at $\theta=1/2$
 and proceeding as in Proposition~\ref{prop:lfk is serre} one can show that the Orlicz space $\ell(f_k^2)$ is a closed subspace of an iterated extension of $k+1$ copies of $\ell_2$. This result is stated in   \cite[Proposition 5.2]{cgp}, but the proof is incorrect. Actually, Proposition~\ref{prop:lfk is serre} is clearly inspired by \cite[Proposition 5.2]{cgp}; as a fair trade, our proof can be easily adapted to provide a correct proof of the latter.
 \medskip

However, those Orlicz spaces $\ell(f_k^2)$ fail to generate $\langle \bf H\rangle_{\bf B}$ because they all have cotype 2 (given $\alpha\geq 0$, the Orlicz  sequence space associated with the function $t^2\log^\alpha(1/t)$ has cotype $2$ and type $2-\varepsilon$ for each $\varepsilon>0$, see \cite[Corollary~13]{katir}), and so do their subspaces and quotients. On the other hand, if one has a short exact sequence $K \to Z \to H$ in which $H$ and $K$ are Hilbert spaces, then $Z$ has type or cotype 2 if and only if the sequence splits, which is equivalent to $Z=K\oplus H$ being itself a Hilbert space. The moral is that a nontrivial twisted Hilbert space $Z$ is neither a subspace nor a quotient of a space $L$ of either type 2 or cotype $2$. One could also consider the dual spaces of those $\ell(f_k^2)$. Incidentally, $\ell(f_k^2)^*$ is isomorphic to the sequence space associated to $t^2\log^{-2k}(1/t)$; see \cite[Chapter 4, Section c]{Lind-Tzaf} for an exhaustive analysis of the sequence spaces generated by the Orlicz functions $t^p\log^\alpha(1/t)$ for $p>1; \alpha>0$. By the type / cotype duality, $\ell(f_k^2)^*$ has type $2$ and cotype $2+\varepsilon$ for each $\varepsilon>0$. But even that $\ell(f_1^2)\oplus \ell(f_1^2)^*$ (say) admits a nontrivial twisted Hilbert space as a subspace or quotient is not enough, as we show now. Let $Z_2$ be the Kalton--Peck space, the most famous and nontrivial twisted Hilbert space ever, whose autobiography up to 2023 can be found in \cite[Section 10.9]{HMBST}:

\begin{claim*} If $C$ is of cotype 2 and has nontrivial type, and $T$ is of type 2, then  $Z_2$ is neither a subspace nor a quotient of $C\oplus T$.\end{claim*}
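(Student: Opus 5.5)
The plan is to use the standard type/cotype obstructions together with the fact that $Z_2$ contains many complemented copies of $\ell_2$, but is not itself a Hilbert space. I treat the subspace and the quotient cases separately; the two are roughly dual.

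\emph{Subspace case.} Suppose $Z_2\hookrightarrow C\oplus T$. Let $P_C$ and $P_T$ be the two coordinate projections. The canonical copy $\ell_2\subset Z_2$ and the quotient $Z_2\to\ell_2$ both carry the nontrivial twisting. The first step is to show that the restriction $P_T|_{Z_2}:Z_2\to T$ must be strictly singular. The argument goes as follows.
\begin{itemize}
\item Every infinite-dimensional subspace of $Z_2$ contains a copy of $\ell_2$ or of $Z_2$, and (by Kalton--Peck) nontrivial twisted Hilbert pieces appear inside.
\item Suppose $P_T$ were an isomorphism on some infinite-dimensional $W\subset Z_2$. If that $W$ can be taken isomorphic to $Z_2$, it would embed in a type 2 space and hence have type 2. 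This is impossible, because a twisted Hilbert space with type 2 splits, as recalled just before the claim.
\end{itemize}
Symmetrically, $P_C|_{Z_2}$ cannot be an isomorphism on any copy of $Z_2$, by the cotype-2 half of the same splitting fact.

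The goal is then to combine these. Since $P_C+P_T=\mathrm{id}$ on $Z_2$, at least one of the two projections is an isomorphism on some block subspace isomorphic to $Z_2$, and this contradicts the previous step. To obtain this I would use the Kalton--Peck property that $Z_2$ is isomorphic to its block subspaces spanned by suitable pairs of unit vectors.

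\emph{Quotient case.} Pass to duals. The space $Z_2$ is self-dual. A quotient map $C\oplus T\to Z_2$ dualizes to an embedding $Z_2\cong Z_2^*\hookrightarrow C^*\oplus T^*$. Since $T$ has type 2, $T^*$ has cotype 2. The assumption that $C$ has nontrivial type (so $C$ is $K$-convex, by Pisier) is exactly what makes $C^*$ have type 2. So we are back in the subspace case with the roles of $C$ and $T$ exchanged. This is where the hypothesis of nontrivial type on $C$ is used.

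\emph{Main obstacle.} The hard step is the splitting argument in the subspace case. Nothing guarantees that $P_C+P_T=\mathrm{id}$ yields a copy of $Z_2$, and not just of $\ell_2$, on which one of the projections is an isomorphism. I would first try a gliding-hump/perturbation argument. I would choose normalized block sequences in $Z_2$ spanning copies of $Z_2$. I would then use the fact that strictly singular operators on $Z_2$ are small on suitable blocks, or alternatively the fact that every operator $Z_2\to$ (type 2 or cotype 2 space) is $\ell_2$-singular modulo compacts. This should show that one projection is small on some such block copy, so that the other is an isomorphism there, giving the contradiction.
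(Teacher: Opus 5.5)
Your quotient case is fine and matches the paper, which also dualizes using $Z_2\cong Z_2^*$. You even spell out the step the paper leaves implicit: nontrivial type gives $K$-convexity (Pisier), so $C^*$ has type $2$.

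The gap is in the subspace case, at exactly the step you flag. Your bullets only show that neither $P_C|_{Z_2}$ nor $P_T|_{Z_2}$ is an isomorphism on a subspace \emph{isomorphic to $Z_2$}. They do not show that these maps are strictly singular. Nothing in them prevents one projection from being an isomorphism on a copy of $\ell_2$, and $\ell_2$ embeds perfectly well into type $2$ and cotype $2$ spaces. Your first bullet is about subspaces, but what is needed is a statement about operators: if an operator on $Z_2$ is an isomorphism on \emph{some} infinite-dimensional subspace, then it is an isomorphism on a copy of $Z_2$.

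The identity $P_C+P_T=\mathrm{id}$ does not supply this by itself, and neither does your gliding-hump idea. Smallness of one projection can only be found on some infinite-dimensional subspace, typically a copy of $\ell_2$. So the other projection is then an isomorphism on a copy of $\ell_2$, which contradicts nothing. Your alternative claim, that operators from $Z_2$ into type $2$ or cotype $2$ spaces are $\ell_2$-singular, is (since $Z_2$ is $\ell_2$-saturated) just their strict singularity, i.e.\ what has to be proved.

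The missing ingredient is the dichotomy the paper quotes from \cite[Proposition~10.9.1]{HMBST}: every operator defined on $Z_2$ is either strictly singular or an isomorphism on a subspace isomorphic to $Z_2$. This is a genuinely nontrivial property of $Z_2$; its proof is where the Kalton--Peck block structure you allude to is actually used.

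With the dichotomy the argument closes in two lines. Take any operator $u:Z_2\to C\oplus T$. The coordinate maps $\pi_1u$ and $\pi_2u$ fix no copy of $Z_2$, by your type/cotype observation, so both are strictly singular. Strictly singular operators form an operator ideal (Kato), so in particular they are closed under sums. Hence $u=\jmath_1\pi_1u+\jmath_2\pi_2u$, with $\jmath_i$ the canonical inclusions, is strictly singular and cannot be an embedding.

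So you must either cite this dichotomy or reprove it. A direct perturbation argument on $P_C+P_T$ will not get around it.
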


\begin{proof} Every operator defined on $Z_2$ is either strictly singular or an isomorphism on a copy of $Z_2$;
see \cite[Proposition 10.9.1]{HMBST}.
Let $u: Z_2\to C\oplus T$ be an operator. Since neither $C$ nor
$T$ contains copies of $Z_2$, the coordinate maps $\pi_1u$ and $\pi_2u$ are
strictly singular. Strictly
singular operators form an operator ideal, so $u$ is stricty singular. In particular, $u$ cannot be an embedding. The quotient part follows from that taking into account that $Z_2$ is isomorphic to its dual. \end{proof}

In particular, $Z_2$  is neither a subspace nor a quotient of any finite direct sum of the Orlicz sequence spaces generated by the functions
$t^2\log^\alpha(1/t)$ with real $\alpha$. A reasonable guess such as: ``If $X$ is a Banach space such that $a_2(n, X\oplus X^*)\lesssim \log^k n$ for some $k$, then $X\in \langle\bf H\rangle_{\bf B}$'' seems doomed to fail like a rook pawn: weak Hilbert spaces \cite[Chapter 12]{pisier2}, whose definition is not important now, exist and satisfy the estimate
$a_2(n, X\oplus X^*) \lesssim \log^k n$
% BEGIN CODEX CHANGE C37 (removed an overclaim about subquotients)
for all $k>0$. Besides, there exist weak Hilbert spaces \cite[Remark~(i), p.~213]{pisier2} that have no subspace or quotient isomorphic to $\ell_2$:
the $2$-convexified Tsirelson space is one; see \cite[Notes and Remarks, p.~215]{pisier2}.
%that do not have subspaces or quotients contain isomorphic copies of $\ell_2$. In particular, examples such as Tsirelson's $2$-convexified space $\mathcal T_2$ are weak Hilbert and contain no subspace or quotient isomorphic to $\ell_2$ \cite[Notes and Remarks, p.~215]{pisier2}.
 If $Z$ is an iterated extension of Hilbert spaces, the $\ell_2$-saturation of $Z$ excludes $T_2$ as a subspace; and $\ell_2$-saturation of $Z^*$ excludes $T_2$ as a quotient of $Z$. This does not control subspaces of quotients or quotients of subspaces, and
therefore one might still have $T_2\in \langle\bf H\rangle_{\bf B}$. Does every quotient of an iterated extension of Hilbert spaces contain $\ell_2$? In general, quotients of $\ell_2$-saturated spaces need not contain $\ell_2$: Argyros and Raikoftsalis proved in \cite{at} that every separable reflexive Banach space is a quotient of a reflexive complementably $\ell_p$-saturated space for any $1<p<\infty$.\smallskip

So, against all odds, the rook pawn might yet queen.
%It is natural to ask whether every quotient of an iterated twisted Hilbert space contains $\ell_2$ (or is $\ell_2$-saturated). In general, quotients of $\ell_2$-saturated spaces need not contain $\ell_2$: Argyros and Raikoftsalis proved in \cite{at} that every separable reflexive Banach space is a quotient of a reflexive complementably $\ell_p$-saturated space for any $1<p<\infty$.

\end{document}